\documentclass[a4paper,11pt]{article}
\usepackage{pdfsync}

\usepackage[plainpages=false]{hyperref}
\usepackage{amsfonts,amsmath,amssymb,amsthm}
\usepackage{latexsym,lscape,rawfonts,mathrsfs}
\usepackage[utf8]{inputenc}
\usepackage[overload]{empheq}
\usepackage{cases}
\usepackage[dvips]{color}
\usepackage{multicol}
\usepackage{extarrows}

\usepackage[all]{xy}
\usepackage{eufrak}
\usepackage{makeidx}
\usepackage{graphicx,psfrag}
\usepackage{pstool}

\usepackage{array,tabularx}

\usepackage{setspace}

\usepackage[titletoc,title]{appendix}

\usepackage{txfonts}

\usepackage{makeidx}

\usepackage{tikz} 
\usetikzlibrary{positioning}
\usepackage{tikz-cd}
\usepackage{tocloft}

\newcommand{\R}{\ensuremath{\mathbb{R}}}

\newcommand{\ba}{\begin{align*}}
	\newcommand{\ea}{\end{align*}}

\makeatletter
\def\ExtendSymbol#1#2#3#4#5{\ext@arrow 0099{\arrowfill@#1#2#3}{#4}{#5}}

\makeatother

\makeatletter
\def\ExtendSymbol#1#2#3#4#5{\ext@arrow 0099{\arrowfill@#1#2#3}{#4}{#5}}

\makeatother

\definecolor{hao}{rgb}{1,0.5,0}
\definecolor{miao}{cmyk}{0.5,0,0.2,0.2}
\definecolor{qiao}{gray}{0.96}

\newtheorem{prop}{Proposition}[section]

\newtheorem{proposition}[prop]{Proposition}

\newtheorem{theorem}[prop]{Theorem}

\newtheorem{lemma}[prop]{Lemma}

\newtheorem{corollary}[prop]{Corollary}

\newtheorem{remark}[prop]{Remark}

\numberwithin{equation}{section}

\makeindex

\title{Heat kernel on Ricci shrinker metric measure spaces}
\author{Bing Wang\; and Jie Wang\footnote{Corresponding author.}}
\date{}

\begin{document}
	\maketitle	
\begin{abstract}
As a metric measure space possessing positive Bakry-\'{E}mery curvature, a Ricci shrinker with potential function $f$ admits a heat kernel $H_f$ under the weighted volume measure $e^{-f}dv$. In this paper, we study $H_f$ systematically and establish a series of fundamental estimates. Moreover, we clarify the equivalence between $H_f$ and the spacetime heat kernel $H(x,t;y,s)$ under Ricci flow induced by a Ricci shrinker. Inspired by this relation, we extend corresponding analysis tools of Ricci flows to Ricci shrinker metric measure spaces, such as Nash entropy and reduced distance. As a direct application, we prove that $\left|\nabla R\right|=o(f^{3/2})$ implies $R=o(f)$.
\end{abstract}

\tableofcontents
\setlength{\cftbeforesecskip}{3pt}
	
\section{Introduction}	
Let $(M^n,g)$ be a  complete $n$-dimensional Riemannian manifold and $Rm,Rc,R$ be the Riemannian curvature, Ricci curvature and scalar curvature respectively. $(M^n,g)$ is called a gradient Ricci soliton if there exists a smooth function $f$ and scalar $\lambda$ such that
$$Rc+Hess\hspace*{0.1em} f=\lambda g.$$
A Ricci soliton is called shrinking, steady or expanding,  if $\lambda>0,\lambda=0$ or $\lambda<0$ respectively. By scaling the metric $g$, we may assume $\lambda\in \left\lbrace \frac{1}{2},0,-\frac{1}{2}\right\rbrace$. Especially, a Ricci shrinker is a triple $(M^n, g,f)$ of smooth manifold $M^n$, Riemannian metric $g$ and a smooth function $f$ satisfying
\begin{equation}\label{1.1*}
	Rc+Hess\hspace*{0.1em}f=\frac{1}{2}g.
\end{equation}
Tracing (\ref{1.1*}) gives
\begin{equation}\label{1.2‘}
	R+\Delta f=\frac{n}{2}.
\end{equation} 
By a normalization of $f$, we may always assume
\begin{align}
	R+\left|\nabla f\right|^2&=f,\label{1.3‘}\\
	\int_{M^n}(4\pi)^{-\frac{n}{2}}e^{-f}dv&=e^{\boldsymbol{\mu}},\label{1.4‘}
\end{align}
where $\boldsymbol{\mu}\leq0$ is the functional of Perelman(cf.\cite{LW}) and $dv$ is the standard volume measure.	Moreover, by \cite{HM}, there exists a point $p\in M^n$ where $f$ attains its infimum. Besides, see also \cite{HLW,LLW,WangW} for further geometric properties of Ricci shrinkers.

Ricci shrinkers arise as self-similar ancient solutions to Ricci flows and play a key role in the singularity analysis. Since the significant work of Perelman \cite{P}, heat kernel becomes the fundamental analysis tool in this field and shows increasing importance. In the past few years, the first named author and Y. Li have made substantial progress in the study of heat kernel on Ricci shrinkers and achieved the complete classification of all K\"{a}hler Ricci shrinker surfaces, see \cite{LW,LW2,LW3,Ba}. Among these works, the heat kernel means the natural positive spacetime function $H(x,t;y,s)$. More precisely, let $(M^n,g(t))_{t<1}$ be the ancient Ricci flow associated with a Ricci shrinker,  then there exists a positive heat kernel function $H(x,t;y,s)$ for all $x,y\in M^n$ and $s<t<1$ such that 

$$\square H(\cdot,\cdot;y,s)=0,\quad\lim_{t\searrow s}H(\cdot,t;y,s)=\delta_y$$
and
$$\square^*H(x,t;\cdot,\cdot)=0,\quad\lim_{s\nearrow t}H(x,t;\cdot,s)=\delta_x,$$
where $\square:=\partial_t-\Delta_{g(t)}$ and $\square ^* : = - \partial _t- \Delta_{g(s)} + R_{g(s)}.$ Furthermore, the heat kernel $H$ satisfies the semigroup property
$$H(x,t;y,s)=\int_{M^n}H(x,t;z,r)H(z,r;y,s)\:\mathrm{d}v_{g(r)}(z),\quad\forall\:x,y\in M^n,\:r\in(s,t)\subset(-\infty,1),$$
and 
\begin{align*}
	\int_{M^n}H(x,t;y,s)\:\mathrm{d}v_{g(t)}(x)\leq1,\quad\int_{M^n}H(x,t;y,s)\:\mathrm{d}v_{g(s)}(y)=1.
\end{align*}

On the other hand, in light of the Ricci shrinker equation (\ref{1.1*}),  any such metric also belongs to the metric measure space $CD\left(\frac{1}{2},\infty\right)$ and possesses positive Bakry-\'{E}mery curvature, i.e. $Rc_f:=Rc+Hess f=\frac{g}{2}>0$. Under its natural weighted measure $e^{-f}dv$ with Laplacian operator $\Delta_f:=\Delta-\left\langle\nabla f,\nabla \cdot\right\rangle$, there exists a symmetric complete heat kernel $H_f(x,y,t)$ related to the heat operator $\partial_t-\Delta_f$ (cf. \cite{AG2}), i.e.
\begin{align*}
	\left(\partial_t-\Delta_f\right)H_f=0,\quad \lim\limits_{t\longrightarrow0}H_f(x,y,t)\longrightarrow\delta_y,\quad \int_{M^n}H_f(x,y,t)e^{-f(y)}dv(y)=1.
\end{align*}
In this setting, there are still quite a few results, see \cite{HLW,WW} and references therein. However, most related heat kernel estimates deduced by conventional methods are pretty rough since corresponding weighted Sobolev and Poinc\'{a}re inequalities are not very sharp. By Davies \cite{Da2}, we know Log-Sobolev inequality with variable parameters is better suited  to attain the  ultracontractivity of heat kernel. For Ricci shrinkers, this type of inequality is proved by Li-Wang \cite{LW} and is applied to derive the upper bound of $H(x,t;y,s)$. However, there is no such Log-Sobolev inequality with variable parameters under the weighted measure $e^{-f}dv$. Therefore, it's natural to transform $H_f$ to some function which can be estimated similarly under the standard measure $dv$.

Via Doob's transform which means measure transform and is quite common in stochastic analysis, $H_f$ turns into the Schrödinger heat kernel $H_\phi$ for heat operator $\partial_t-(\Delta-\phi)$ under standard measure $dv$, where $\phi=(f+R-n)/4.$
Especially, it holds that
\begin{align*}
	H_f=H_\phi e^{\frac{f(x)+f(y)}{2}}.
\end{align*} 
For details, see section \ref{sec-pe}. From this point of view, we have
\begin{theorem}\label{t1.1'}
	For any Ricci shrinker $(M^n,g,f)$, $x,y\in M^n$ and $t>0$,
	\begin{align*}
		H_\phi(x, y, t) \leq \frac{C(n)e^{-\boldsymbol{\mu}}}{\left(1-e^{-t} \right)^{\frac n2}} \exp \left(-\frac{f(x)+f(y)}{2} \tanh \frac{t}{4}\right),
	\end{align*}
	where $C(n)$ is a dimensional constant. Hence by $H_{f}=H_{\phi} e^{\frac{f(x)+f(y)}{2}}$,
	\begin{align*}
		H_f(x, y, t) \leq \frac{C(n)e^{-\boldsymbol{\mu}}}{\left(1-e^{-t} \right)^{\frac n2}}\exp \left(\frac{f(x)+f(y)}{2} \left(1-\tanh \frac{t}{4}\right)\right).
	\end{align*}
\end{theorem}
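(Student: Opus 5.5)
The plan is to prove the bound for $H_\phi$ directly. The statement for $H_f$ then follows from the Doob transform identity $H_f=H_\phi e^{\frac{f(x)+f(y)}{2}}$ of section \ref{sec-pe}. The shape of the bound, $(1-e^{-t})^{-n/2}\exp(-\frac{f(x)+f(y)}{2}\tanh\frac t4)$, is the Mehler kernel of a harmonic oscillator $-\Delta+\frac{|x|^2}{16}$ on $\R^n$ with $f\approx|x|^2/4$. Its cross term $-2xy/\sinh$ has been discarded using $\frac{\cosh s-1}{\sinh s}=\tanh\frac s2$. I therefore aim to compare $H_\phi$ with the Gaussian shrinker through two ingredients: an ultracontractive on-diagonal bound, and a weighted ($e^{a(t)f}$) energy argument that produces the $\tanh\frac t4$ decay.

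\textbf{Step 1 (on-diagonal bound).} I would use the Li--Wang log-Sobolev inequality with variable parameter on shrinkers. For every $\tau>0$ and $\int u^2dv=1$,
\[
\int u^2\log u^2\,dv\le \tau\int(4|\nabla u|^2+Ru^2)\,dv-\boldsymbol{\mu}-n-\tfrac n2\log(4\pi\tau).
\]
Since $\phi=\frac{f+R-n}{4}$ and $f\ge R\ge 0$, the quadratic form of $-\Delta+\phi$ dominates $\int|\nabla u|^2+\frac14Ru^2-\frac n4u^2$. Running Davies' argument for $L^2\to L^p(t)$ hypercontractivity of $e^{t(\Delta-\phi)}$, with a time-dependent choice of $\tau$, gives $\|e^{t(\Delta-\phi)}\|_{1\to\infty}\le C(n)e^{-\boldsymbol{\mu}}(1-e^{-t})^{-n/2}$. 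The factor $1-e^{-t}$ in place of $t$ comes from the linear term $\phi\ge\frac{f-n}{4}$ and the exponential time change. An alternative route to the same bound is the identification with the Ricci flow kernel $H(x,0;y,1-e^{t})$ combined with the Li--Wang bound $C e^{-\boldsymbol\mu}(t-s)^{-n/2}$. That identification works because pulling back by $\varphi_t$ and setting $\tau=-\log(1-t)$ turns $\square$ into $\partial_\tau-\Delta_f$.

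\textbf{Step 2 (weighted decay).} For $u=e^{t(\Delta-\phi)}u_0$, I would compute $\frac{d}{dt}\int e^{2a(t)f}u^2$ and use $|\nabla f|^2\le f$ and $\phi\ge\frac{f-n}{4}$. The cross term $4a\int e^{2af}u\langle\nabla f,\nabla u\rangle$ is absorbed into $-2\int e^{2af}|\nabla u|^2$, leaving a coefficient of $f u^2 e^{2af}$ of the form $2a'+2a^2-\frac12$. I choose $a$ to solve the Riccati equation $a'=\frac14-a^2$ with $a(0)=0$, i.e.\ $a(t)=\frac12\tanh\frac t2$. I then combine it with the semigroup splitting $H_\phi(x,y,t)=\int H_\phi(x,z,t/2)H_\phi(z,y,t/2)dv(z)$. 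Half of the time is used to gain the weight at $x$ and half at $y$, which gives $\tanh\frac t4$ after rescaling. The result is a weighted $L^2\to L^\infty$ bound, obtained from Step 1 with the weight inserted; this yields $\exp(-\frac{f(x)}2\tanh\frac t4)$ at one end and, by symmetry, the same factor for $f(y)$.

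\textbf{Main obstacle.} The difficulty is combining Steps 1 and 2 without losing the sharp constants. Specifically, I need to run the Davies hypercontractivity argument for the weighted semigroup $e^{af}e^{t(\Delta-\phi)}e^{-af}$, where the error terms $a^2|\nabla f|^2-a\Delta f$ contain $\Delta f=\frac n2-R$. These errors must be controlled only by $f$ and constants depending on $n$, so that they can be absorbed into the $(1-e^{-t})^{-n/2}$ prefactor. I would first try to make the log-Sobolev step and the weight step simultaneous: differentiate $\log\|e^{af}u\|_{p(t)}$ with both $p(t)$ and $a(t)$ varying, and choose them so that all $f$-terms cancel exactly. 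This mirrors the fact that the Mehler kernel is exact.
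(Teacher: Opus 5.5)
Your architecture matches the paper's. The weight $e^{\frac f2\tanh\frac t2}$ comes from the Riccati equation $a'=\frac14-a^2$, and splitting $H_\phi(x,y,t)$ at $t/2$ turns $\tanh\frac t2$ into $\tanh\frac t4$. The gap is that the estimate carrying the theorem is left unproved as your ``main obstacle''. What you need is a bound with the weight at the \emph{evaluation point}, uniform in time, for instance
\[
\sup_{x\in M^n}e^{a(s)f(x)}\Bigl(\int_{M^n}H_\phi(x,z,s)^2\,dv(z)\Bigr)^{1/2}\le C(n)e^{-\boldsymbol{\mu}/2}\left(1-e^{-s}\right)^{-\frac n4},\qquad a(s)=\tfrac12\tanh\tfrac s2 .
\]
Steps 1 and 2 cannot produce this. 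Step 2 only bounds $\|e^{a(t)f}e^{t(\Delta-\phi)}\|_{L^2\to L^2}$, and it puts the weight on the integrated variable. Its growth $e^{nt/4}$ is also sharp. By stochastic completeness of $H_f$ one has $e^{t(\Delta-\phi)}e^{-f/2}=e^{-f/2}$, and on the Gaussian shrinker $\|e^{(a(t)-\frac12)f}\|_2/\|e^{-f/2}\|_2=\bigl(\frac{1+e^t}{2}\bigr)^{n/4}$. So composing Step 2 with the unweighted Step 1 can neither give a $t$-independent $C(n)$ nor place the weights at $x$ and $y$.

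The fix you suggest, running Davies' argument for $u=e^{a(t)f}e^{t(\Delta-\phi)}u_0$ with $a$ and $p$ varying together, is what the paper does, but only for $t\le1$.
\begin{itemize}
\item By Lemma \ref{l2.1}, the $f$-terms cancel exactly for $a=\frac12\tanh\frac t2$.
\item In Proposition \ref{p7.1}, the drift integrates by parts to $\frac1p\tanh\frac t2\int u^p\Delta f$ with $\Delta f=\frac n2-R$. After the log-Sobolev inequality with $\tau=\frac{p-1}{p'}$, the total $R$-coefficient is a negative multiple of $\bigl(\frac{p(1-\tanh\frac t2)}{2}-1\bigr)^2$.
\item With $p=\frac{T}{T-t}$ this costs a factor $e^{nT/2}$.
\end{itemize}
For $t>1$ the paper needs two ingredients your plan lacks.
\begin{itemize}
\item A maximum principle for $\partial_t-\Delta+\tanh\frac t2\langle\nabla\cdot,\nabla f\rangle$ (Lemma \ref{l7.2}, Theorem \ref{t7.3}). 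It works because the leftover term $\frac n4(1-\tanh\frac t2)$ is integrable on $(0,\infty)$. At finite $p$, by contrast, the drift contributes $\frac{n}{2p}\tanh\frac t2$, whose time integral grows like $T$ for $p=\frac{T}{T-t}$.
\item The weighted mass bound $\int H_\phi(x,y,t)e^{\frac{f(x)}2\tanh\frac t2}dv(y)\le(\frac{2}{1+e^{-t}})^{n/2}$ (Corollary \ref{c7.4}). It is obtained by duality against the supersolution $\Phi=e^{\lambda(s)f+c(s)}$ with $\lambda=\frac1{1+e^s}$, which satisfies $(\partial_s-\Delta_f)\Phi=\lambda^2R\Phi\ge0$.
\end{itemize}
The theorem then follows by bounding one factor of the split kernel in sup and the other in $L^1$. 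Your Cauchy--Schwarz version is naturally fed by the same two inputs, since for $k(x,z)=e^{af(x)}H_\phi(x,z,s)$ one has $\sup_x\int k(x,z)^2dz\le\sup_{x,z}k\cdot\sup_x\int k(x,z)\,dz$. Alternatively, you could start Davies' flow at $p(0)=2$ and let $1/p(t)$ decay exponentially, so that $\int dt/p$ stays bounded uniformly in the final time. That choice would have to be written out and checked.
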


Note that on $\left(\R, g, e^{-\frac{x^2}{4}}dx\right)$,

\begin{align*}
	H_{f}^{\R}\left( x,y,t\right)=\frac{1}{\left(4\pi (1-e^{-t}) \right)^{\frac12}}
	\exp \left( -\frac{(x-y)^2}{8 \sinh \frac{t}{2}} +\frac{x^2+y^2}{8}\left[ 1-\tanh \frac{t}{4} \right] \right).
\end{align*}	
Hence the exponential term in our estimate of Theorem \ref{t1.1'} is optimal on diagonal $x=y$. However, if $d(x,y)$ is very large, the above upper bound is not too precise for the lack of distance term. To improve this issue, we establish the following estimate for $x=p$.

\begin{theorem}\label{t1.2'}
	For any Ricci shrinker $(M^n,g,f)$, $x\in M^n$, $\delta\in\left(\frac{1}{2}, \frac{2}{3}\right)$ and $t\geq \frac{2}{3\delta}$, there exists a constant $C=C(n,\delta)$ such that 
	\begin{equation*}
		H_f(p,x,t)\leq Ce^{\boldsymbol{-\mu}}e^{-\frac{e^{-2\delta t}}{1-e^{-2\delta t}}f(x)}.
	\end{equation*}
	For $0<t\leq\frac{4}{3}$ and $4<D<6$,
	\begin{equation*}
H_f(p,x,t)\leq\frac{C(n,D)e^{-\boldsymbol{\mu}}}
{\left(1-e^{-t}\right)^{\frac n2}}\exp\left(-\frac{d^2(p,x)}{Dt}
+\frac{f(x)}2\right).
	\end{equation*}
\end{theorem}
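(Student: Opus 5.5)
The plan is to exploit the equivalence between $H_f$ and the spacetime heat kernel of the associated Ricci flow, and combine Theorem \ref{t1.1'} with the semigroup property and a Gaussian-type integral estimate. Write $H_f(p,x,t)=\int_M H_f(p,z,s)H_f(z,x,t-s)e^{-f(z)}dv(z)$.

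\textbf{Large times.} Apply Theorem \ref{t1.1'} to the second factor. The point is the comparison with the model $\mathbb{R}$, where
\begin{equation*}
H_f^{\mathbb R}(0,x,t)\sim \exp\!\left(-\frac{e^{-t}}{1-e^{-t}}\frac{x^2}{4}\right).
\end{equation*}
So the target exponent $-\frac{e^{-2\delta t}}{1-e^{-2\delta t}}f(x)$ with $\delta<\tfrac23$ is a lossy version of this. My plan is a maximum-principle/Davies-type weighted argument. I would consider the quantity
\begin{equation*}
\int_M H_f(p,y,t)^2\, e^{2a(t)f(y)}e^{-f}\,dv,
\end{equation*}
differentiate in $t$, and use $|\nabla f|^2\le f$ together with $\Delta_f f=\tfrac n2-f$. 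The identity $\Delta_f f=\tfrac n2-f$ is exactly what gives the Ornstein--Uhlenbeck structure. Choosing $a(t)$ to solve a Riccati ODE like $a'=-2\delta a-ca^2$ should produce the factor $\frac{e^{-2\delta t}}{1-e^{-2\delta t}}$; the slack $\delta>\tfrac12$ absorbs the curvature term $R\ge0$ and the constants. Then I would convert the weighted $L^2$ bound to a pointwise one. For this I use the semigroup property at time $t-\tfrac{1}{3\delta}$, say, together with Theorem \ref{t1.1'} at a bounded time. The restriction $t\ge \frac{2}{3\delta}$ ensures the constant $(1-e^{-t})^{-n/2}$ is uniform. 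Also $f(p)\le n/2$ (from $f=R+|\nabla f|^2$ at a minimum and $R\le n/2$ there), so the base-point factor is harmless.

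\textbf{Small times.} Here I want a Gaussian bound. I would use Davies' perturbation method on $H_\phi$ (equivalently $H_f$) with weight $e^{\lambda\psi}$, where $\psi$ is a Lipschitz function approximating $d(p,\cdot)$. The semigroup $e^{\lambda\psi}P_te^{-\lambda\psi}$ has $L^2$ norm at most $e^{\lambda^2t+ct}$, since $\phi$ is bounded below ($\phi\ge -n/4$). I would combine this with the ultracontractive bound $C(1-e^{-t})^{-n/2}e^{-\boldsymbol\mu}$ from Theorem \ref{t1.1'}, and then optimize $\lambda=d/(2t)$. The loss from $4$ to $D>4$ comes from the standard splitting $t=t/2+t/2$ and an $\epsilon$ of room. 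After translating $H_\phi$ back to $H_f$, the factor $e^{f(x)/2}$ appears, with $f(p)$ again bounded.

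\textbf{Main obstacle.} I expect the hardest step to be the large-time weighted estimate: getting the precise exponent $\frac{e^{-2\delta t}}{1-e^{-2\delta t}}$ from a differential inequality, while controlling the error terms from $R$ and from $|\nabla f|^2\ne f$. The first thing I would try is to work with $F=\log H_f$ and a comparison function of the form $-a(t)f+b(t)$, and check whether $\Delta_f f=\tfrac n2-f$ makes the error exactly integrable.
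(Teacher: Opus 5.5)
\textbf{Small times.} This part is essentially the paper's argument. The Gaussian bound for $H_\phi$ with denominator $(4+\varepsilon)t$ is already Theorem \ref{t2.3}, and $H_f(p,x,t)=H_\phi(p,x,t)e^{\frac{f(p)+f(x)}{2}}$ together with $f(p)\le\frac n2$ finishes it. You should simply cite Theorem \ref{t2.3} with $\varepsilon=D-4$. Re-deriving it from a Davies--Gaffney $L^2$ bound plus on-diagonal ultracontractivity via a $t/2+t/2$ splitting would still require controlling the tails of $H_\phi(p,\cdot,t/2)$, which is what the quantity $E_D$ of Proposition \ref{p2.2*} does.

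\textbf{Large times: the gap.} This part does not work as written, for two reasons.

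First, your weighted functional $E(t):=\int H_f(p,y,t)^2e^{(2a-1)f(y)}dv(y)$ does not see decay. With $a(t)\approx\frac{e^{-2\delta t}}{1-e^{-2\delta t}}\to0$ the weight $e^{(2a-1)f}$ decays. Since $H_f(p,\cdot,t)=H_f(\cdot,p,t)\le C(n)e^{-\boldsymbol{\mu}}$ for $t\ge1$ by Proposition \ref{p6.2}, $E$ is bounded for trivial reasons and carries no information on decay in $f$. To detect decay you need $a>\frac12$. But then, already on the Gaussian shrinker, $E(t)\ge c(n)(e^t-1)^{n/2}$ whenever it is finite. Consistently, differentiating and using the ground-state identity $\int|\nabla v|^2dv_f\ge\int\phi v^2dv_f$ for $v=H_fe^{af}$ gives $E'\le\int v^2\bigl[(2a'+2a^2-\tfrac12)f+\tfrac n2\bigr]dv_f$. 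The resulting Riccati equation is $a'=\frac14-a^2$ (solutions $\frac12\coth\frac{t+c}{2}$), with an unavoidable factor $e^{\frac n2t}$. It is not $a'=-2\delta a-ca^2$, and it does not give a $t$-independent constant.

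Second, your $L^2\to L^\infty$ step loses the decay. After Cauchy--Schwarz in $H_f(p,x,t)=\int H_f(p,z,t-s)H_f(z,x,s)e^{-f(z)}dv(z)$, the $x$-dependence sits in $\int H_f(z,x,s)^2e^{-(1+2a)f(z)}dv(z)$. Theorem \ref{t1.1'} has no off-diagonal decay, so it bounds this only by $Ce^{(1-\tanh\frac s4)f(x)}$, which grows in $f(x)$. One might try to repair both points using Theorem \ref{t2.3}, the slack $\delta>\frac12$ and the uniform bound above, but that bookkeeping is not in your plan. Moreover, the hypotheses $\delta>\frac12$ and $t\ge\frac{2}{3\delta}$ play different roles from the ones you assign. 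For instance, $R$ enters with a favorable sign and needs no absorbing.

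\textbf{What is missing.} You need a pointwise maximum-principle argument, which yields $t$-uniform constants automatically. The paper sets $q(x,t)=H_\phi(p,x,t)e^{\frac12\coth(\delta t)f(x)}$ and, using $\phi=\frac14(f+R-n)$, $\Delta f=\frac n2-R$ and $|\nabla f|^2=f-R$, writes $\partial_tq-\Delta q+\coth(\delta t)\langle\nabla q,\nabla f\rangle=q\cdot[\,\cdots]$. In the bracket:
\begin{itemize}
\item the coefficient of $f$ is $\frac14(1-2\delta)(\coth^2\delta t-1)\le0$, exactly because $\delta\ge\frac12$;
\item the $R$-term is $-\frac14(1-\coth\delta t)^2R\le0$;
\item the constant $\frac n4(1-\coth\delta t)$ is negative.
\end{itemize}
So $q$ is a subsolution.

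It then starts at $t_0=\sigma/\delta$ with $\sigma\coth\sigma<2\delta$. This is possible only because $\delta>\frac12$, since $\sigma\coth\sigma>1$. At $t_0$, Theorem \ref{t2.3} with $D$ close to $4$ and $f\le\frac14(d(p,\cdot)+\sqrt{2n})^2$ give $q(\cdot,t_0)\le C(n,\delta)e^{-\boldsymbol{\mu}}$.

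It then verifies the growth hypothesis $q\le Ce^{(1-\varepsilon)f}$ of Lemma \ref{l2.5} (barrier $e^{\frac n2t+f}$). On $[t_0,\frac{2}{3\delta}]$ this follows from Theorem \ref{t2.3}. For $t\ge\frac2{3\delta}>1$ it follows from Theorem \ref{t2.1} and $\frac12\coth\frac23<1$; this is where $\delta<\frac23$ enters.

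Lemma \ref{l2.5} then gives $q\le C(n,\delta)e^{-\boldsymbol{\mu}}$ for all $t\ge t_0$. The identity $\frac12(\coth\delta t-1)=\frac{e^{-2\delta t}}{1-e^{-2\delta t}}$ together with $f(p)\le\frac n2$ turns this into the statement.

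Your closing idea of comparing $\log H_f$ with $-a(t)f+b(t)$ points in this direction. However, the subsolution computation, the short-time initialization, and the growth condition for the maximum principle on a noncompact manifold are exactly the steps that still need to be supplied.
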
	
Here for the first case, we remark that on $\left(\R, g, e^{-\frac{x^2}{4}}dx\right)$, $\delta=\frac{1}{2}$, so this estimate is almost sharp. For the latter case, since $Dt<8$ and $f(x)$ increases quadratically with respect to the distance $d(p,x)$ (i.e. $f(x)\sim d^2(p,x)/4$, cf. \cite{HM}), so 
$$-\frac{d^2(p,x)}{Dt}+\frac{f(x)}{2}\sim -\left(\frac{1}{Dt}-\frac{1}{8}\right)d^2(p,x)\quad \mbox{as}\quad d(p,x)\longrightarrow\infty.$$
In other words, with the help of a gradient estimate of $H_f$, we can get  another type of upper bound based on Theorem \ref{t1.2'} for arbitrary $x,y\in M^n$ which is different from that of Theorem \ref{t1.1'}. Besides, during the arguments to prove Theorem \ref{t1.1'} and \ref{t1.2'}, many other corresponding estimates are also established.

Specifically, for spacetime heat kernel upper bound on Ricci shrinkers and $0<\sigma<1$, Li--Wang \cite[Theorem~1.3]{LW2} proves that if $d_t(p,x)\le\sigma^{-1}$ and $-\sigma^{-1}\leq s<t\leq1-\sigma$, if $\boldsymbol{\mu}\geq-A$, then
\begin{align*}
	(i)\quad H(x,t;y,s)
	&\le
	\frac{C(n,A,\sigma)}{(t-s)^{\frac n2}}
	\exp\left\{
	-\frac{d_s^2(x,y)}{C(n,A,\sigma)(t-s)}
	\right\}.
\end{align*}
 Meanwhile, let $\mathcal N_{x,t}(t-s)$ be the pointed Nash entropy, then for an \(H_n\)-center $(z,s)$ of $(x,t)$ and $\varepsilon>0$, 
\begin{align*}
	(ii)\quad H(x,t;y,s)
	\le
	\frac{C(n,\varepsilon)e^{-\mathcal N_{x,t}(t-s)}}
	{(t-s)^{\frac n2}}
	\exp\left\{
	-\frac{d_s^2(z,y)}{(4+\varepsilon)(t-s)}
	\right\}.
\end{align*}
 By the exact correspondence between $H_f$ and the spacetime heat kernel \(H(x,t;y,s)\)
established in Proposition \ref{p6.1}, Proposition \ref{p7.6} then yields, for every $\varepsilon>0$, $0<\beta<1$, $s<t<1$ and $x,y\in M^n$,
\begin{align*}
	&H(x,t;y,s)\\
	&\le
	\frac{C(n,\varepsilon)e^{-\boldsymbol{\mu}}}{(t-s)^{\frac n2}}
	\exp\left\lbrace
	\frac{1-\beta\tanh\left(\frac14\ln\frac{1-s}{1-t}\right)}{2}
	f\left(\psi_{\ln\frac1{1-t}}(x)\right)-\frac{1+\beta\tanh\left(\frac14\ln\frac{1-s}{1-t}\right)}{2}f\left(\psi_{\ln\frac1{1-s}}(y)\right)\right. \\
	&\left. \quad-\frac{(1-\beta)d^2\left(\psi_{\ln\frac1{1-t}}(x),\psi_{\ln\frac1{1-s}}(y)\right)}{(4+\varepsilon)\ln\frac{1-s}{1-t}}
	 \right\rbrace.
\end{align*}
In contrast, the spacetime form of Proposition \ref{p7.6} is valid for all $s<t<1$ and all $x,y\in M^n$, including terminal points escaping to infinity and times approaching the singular time.  Moreover, it
simultaneously retains Gaussian decay in the explicit distance and endpoint-potential weights.  The parameter $\beta$ describes the trade-off between these two effects: small $\beta$ preserves
the nearly Euclidean Gaussian coefficient, whereas large $\beta$ increases the potential-weight component.  Li--Wang \cite[Theorem~1.3(ii)]{LW2} obtains for an arbitrary terminal point, a global Gaussian estimate with the nearly Euclidean denominator $4+\varepsilon$, but its Gaussian factor is centered at an $H_n$-center whose location is not prescribed geometrically. To be precise, these two type of estimates are complementary rather than one being obviously superior to the other.  The advantages of our estimates are their global spacetime validity and their direct potential decay, rather than a sharper Gaussian constant.

Any Ricci shrinker metric $g$ induces an ancient solution to Ricci flow by the diffeomorphisms generated by $\frac{\nabla_g f}{1-t}$ for $t<1$ (cf. \cite[Chapter 4]{CLN}), so in principle, $H_f$ should be equivalent to $H(x,t;y,s)$ up to diffeomorphisms and scaling. To this end, in section \ref{sec-diff}, we discuss the relation between these two types of heat kernels in detail. Simply put, by appropriate
inverse diffeomorphisms	and scaling, the forward heat kernel $H(\cdot, \cdot,y,s)$ becomes $H_f(\cdot, y,t)$ and the backward one $H(x,t,\cdot,\cdot)$ turns into $H_f(x,\cdot,s)e^{-f(\cdot)}$. In other words, this type of transform not only provides a new perspective to study the forward heat kernel $H(\cdot, \cdot; y,s)$ but also clarifies the relation between it and the backward $H(x,t,\cdot,\cdot)$. Besides, we are able to extend the analysis tools for Ricci flow to Ricci shrinker metric measure space $\left(M^n,g,e^{-f}dv\right)$, such as Perelman's entropy, reduced distance, etc.

Especially, to derive the lower bound estimate of $H_f(x, y, t)$, we introduce the weighted reduced distance $\bar{\ell}^x_f(y,t)$. In fact, we have that $\ell^x_f=\bar{\ell}^x_f+f$ is the diffeomorphic counterpart of Perelman's reduced distance under Ricci flow. Therefore, $\bar{\ell}^x_f$ can be regarded as a kind of forward reduced distance in the Ricci shrinker metric measure spaces.

\begin{theorem}\label{t1.3'}
	For any Ricci shrinker $(M^n,g,f)$, $x,y\in M^n$ and $t>0$,
	\begin{align*}
		H_f(x, y, t) \geq \frac{e^{-\bar{\ell}^x_f(y,t)}}{\left(4\pi (1-e^{-t}) \right)^{\frac n2}}.
	\end{align*}
	Moreover, if the inequality becomes an equality somewhere, then the Ricci shrinker is isometric to the standard flat Gaussian shrinker $\left(\mathbb{R}^n,g_E,\frac{x^2}{4}\right)$. 
\end{theorem}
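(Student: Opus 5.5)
We transfer Perelman's heat kernel lower bound for Ricci flow to $H_f$, using the correspondence between $H_f$ and the spacetime kernel $H(x,t;y,s)$ of the ancient flow $g(\tau)=(1-\tau)\psi_\tau^*g$ (Proposition \ref{p6.1}). Fix $x$ and a final time. Perelman's result on a complete Ricci flow with bounded geometry, extended to Ricci shrinkers by Li--Wang (cf. \cite{LW}) via their global heat kernel theory, says the following. The conjugate heat kernel $H(x,t;\cdot,s)$ based at $(x,t)$ satisfies
\begin{align*}
H(x,t;y,s)\ge \frac{e^{-\ell_{(x,t)}(y,s)}}{(4\pi (t-s))^{n/2}},
\end{align*}
where $\ell$ is Perelman's reduced distance. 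The argument behind it is that $u=(4\pi\tau)^{-n/2}e^{-\ell}$ is a subsolution of the conjugate heat equation, $\square^*u\le 0$ in the barrier sense, since $\partial_\tau\ell-\Delta\ell+|\nabla\ell|^2-R+\frac{n}{2\tau}\ge 0$. Moreover $u$ is asymptotic to $\delta_x$ as $\tau\to 0$. The maximum principle, applied to $H-u$ via the weak formulation against the forward heat equation, then gives the bound.

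\textbf{Step 1 (change of variables).} Write $\tau=t-s$. The relation of Section \ref{sec-diff} says that $H(x,t;\cdot,\cdot)$ pulls back by $\psi$ to $H_f(x,\cdot,\cdot)e^{-f(\cdot)}$ after the time reparametrization $\ln\frac{1-s}{1-t}\leftrightarrow$ heat time. We therefore define $\bar\ell^x_f(y,t)$ so that $\ell^x_f=\bar\ell^x_f+f$ is exactly the pullback of Perelman's $\ell$. The volume factor must also be tracked. Under $t'=\ln\frac{1}{1-\cdot}$ scaling, the prefactor $(4\pi\tau)^{-n/2}$ together with the Jacobian $(1-t)^{n/2}$ becomes $(4\pi(1-e^{-t}))^{-n/2}$. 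This is consistent with the Gaussian model $H^{\mathbb R}_f$ displayed above. Carrying this bookkeeping through converts Perelman's inequality into exactly the claimed inequality. Alternatively, one can avoid the flow altogether. In that case we verify directly on $(M,g,e^{-f}dv)$ that $v=(4\pi(1-e^{-t}))^{-n/2}e^{-\bar\ell^x_f}$ satisfies $(\partial_t-\Delta_f)v\le0$ in the barrier sense, using $Rc_f=g/2$ in the second variation of the $\mathcal L$-functional. We then apply the weighted maximum principle, which is justified by the upper bounds of Theorem \ref{t1.1'} and the quadratic growth of $f$.

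\textbf{Step 2 (rigidity).} Suppose equality holds at some point $(y_0,t_0)$. Set $w=H_f-v\ge0$, which is a supersolution. By the strong maximum principle, $w\equiv0$ on $M\times(0,t_0]$. Hence $v$ is an exact solution, and the Perelman differential inequality becomes an equality everywhere. This forces the equality case of the $\ell$ estimates, $Rc+\mathrm{Hess}\,\ell-\frac{g}{2\tau}=0$ along every $\mathcal L$-geodesic, which in turn forces the vanishing of the terms $\int\tau^{3/2}(\ldots)$ in the second variation, i.e. $Rc=0$. Then $\mathrm{Hess} f=g/2$, so $M$ is isometric to $\mathbb R^n$ with $f=|x-p|^2/4$ (standard argument, as in \cite{CLN}).

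\textbf{Main obstacle.} The main difficulty is justifying the maximum principle on a noncompact shrinker, where $\ell$ is only Lipschitz and the curvature is unbounded. I would try first to use the barrier sense of $\square^*\le0$ together with Li--Wang's integrability of $H$. Cutting off with $\phi(f/r)$, the boundary terms vanish by the Gaussian decay of Theorem \ref{t1.2'} and the growth $\ell\gtrsim -C$. The rigidity step also requires regularity of $\ell$ on the equality set. There I would use that $v=H_f$ is smooth, which makes $\ell$ smooth.
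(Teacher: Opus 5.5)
Your derivation of the inequality is the paper's own: you pull back Li--Wang's form of Perelman's bound $H(x,0;y,s)\ge(4\pi(-s))^{-n/2}e^{-\ell}$ through Proposition~\ref{p6.1}. Your prefactor bookkeeping is correct, since $(1-s)^{-n/2}(4\pi(-s))^{n/2}=(4\pi(1-e^{-\theta}))^{n/2}$ for $\theta=\ln(1-s)$.

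Passing from one equality point to $H_f\equiv v$ on the earlier time slab is also fine in outline, with one caveat. Because $\ell$ is only locally Lipschitz, you cannot use smoothness of $v$ before you know $v=H_f$. You need a strong maximum principle for a non-smooth supersolution. One option is a Calabi-type barrier argument, using Perelman's upper barriers for $\ell$. The other is the paper's weak version:
\begin{itemize}
\item pair $L=(4\pi\tau)^{-n/2}e^{-\ell}$ with the forward kernel $u=H(\cdot,\cdot;Y,S)$;
\item show that $I(r)=\int Lu\,dv_{g(r)}$ is nondecreasing, with $I(b)-I(a)=\int u\,d\mathcal D$;
\item use that the endpoint limits $L(Y,S)$ and $H(X,T;Y,S)$ coincide, which forces the nonnegative measure $\mathcal D$ to vanish.
\end{itemize}

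The genuine gap is the step from $Rc+\mathrm{Hess}\,\ell=\frac{g}{2\tau}$ to $Rc=0$. The $\tau^{3/2}$-term in Perelman's second variation is $K=\int_0^{\tau}s^{3/2}\mathcal{H}(X)\,ds$, where $\mathcal{H}(X)=-\partial_\tau R-\frac{R}{\tau}-2\langle\nabla R,X\rangle+2Rc(X,X)$ and $X=\nabla\ell$.

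Once the soliton equation holds, you have $\nabla R=2Rc(\nabla\ell)$ and $\Delta R-\langle\nabla\ell,\nabla R\rangle=\frac{R}{\tau}-2|Rc|^2$. Hence $-\partial_\tau R=\Delta R+2|Rc|^2=\langle\nabla\ell,\nabla R\rangle+\frac{R}{\tau}$, and therefore $\mathcal{H}(\nabla\ell)\equiv0$. So the vanishing of that term is automatic and carries no information. Indeed, $Rc+\mathrm{Hess}\,\ell=\frac{g}{2\tau}$ on its own only says the flow is a shrinking soliton that becomes singular at $\tau=0$, and non-flat shrinkers satisfy that.

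What excludes curvature is that this soliton structure is centred at the \emph{regular} base point $(X,T)$, where the flow is smooth. The paper uses exactly this:
\begin{itemize}
\item Along a minimizing $\mathcal L$-geodesic with $\gamma'=\nabla b$, the two identities above together with $\partial_\tau R=-\Delta R-2|Rc|^2$ give $\frac{d}{d\tau}R(\gamma(\tau),\tau)=-\frac{R}{\tau}$.
\item So $\tau R(\gamma(\tau),\tau)$ is constant, and boundedness of $R$ near $(X,T)$ makes the constant zero. Thus $R$ vanishes at $\gamma(\tau_0)$.
\item The strong maximum principle for $\Delta_fR=R-2|Rc|^2\le R$ with $R\ge0$ then gives $R\equiv0$, hence $Rc\equiv0$, $\mathrm{Hess}f=\frac g2$, and the Gaussian shrinker.
\end{itemize}
Without an argument of this kind (or Yokota's $\mathcal L$-geodesic argument), your Step 2 does not reach $Rc=0$.
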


Besides, by diffeomorphisms and rescaling, Perelman's differential Harnack inequality of Ricci shrinkers (cf. \cite[Theorem 21]{LW}) has a static version with respect to $H_f$. Based on this type of inequality and our heat kernel estimates, we have
\begin{theorem}\label{t1.4'}
	Let $(M^{n},g,f)$ be a complete noncompact Ricci shrinker. Assume that $|\nabla R|=o\left(f^{3/2}\right)$ uniformly at infinity, i.e.
	\begin{align*}
		\eta(L):=\sup_{\{z\in M^n:\,f(z)\ge L\}}\frac{|\nabla R|(z)}{f^{3/2}(z)}\longrightarrow0
		\quad\text{as }L\longrightarrow\infty.
	\end{align*}
	Then
	\begin{align*}
		\frac{R(x)}{f(x)}\longrightarrow0\quad\text{as }f(x)\longrightarrow\infty.
	\end{align*}
\end{theorem}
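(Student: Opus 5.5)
The plan is to argue by contradiction, combining the static Harnack inequality for $H_f$ with the upper and lower heat kernel bounds of Theorems \ref{t1.1'}--\ref{t1.3'}. Suppose there are $\varepsilon_0>0$ and points $x_i$ with $f(x_i)\to\infty$ and $R(x_i)\ge\varepsilon_0 f(x_i)$.

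\textbf{Step 1: propagate large curvature.} Set $r_i:=c\,f(x_i)^{-1/2}$ with $c$ small. Along any geodesic from $x_i$ of length at most $r_i$ we have $f\simeq f(x_i)$, because $|\nabla f|\le\sqrt f$. The hypothesis $|\nabla R|\le\eta(L)f^{3/2}$ then gives
\[
|R(z)-R(x_i)|\le \eta\, f(x_i)^{3/2}\cdot C f(x_i)^{-1/2}\,c=o(f(x_i)).
\]
Since $\eta\to0$, we may even take $r_i=\Lambda f(x_i)^{-1/2}$ for any fixed large $\Lambda$. Consequently $R\ge\frac{\varepsilon_0}{2}f(x_i)$ on $B(x_i,\Lambda f(x_i)^{-1/2})$. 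Note also that $|\nabla f|^2=f-R\le(1-\varepsilon_0/2)f$ there.

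\textbf{Step 2: convert this into a heat kernel statement via Harnack.} The static form of Perelman's differential Harnack inequality (the diffeomorphic version of \cite[Theorem 21]{LW} alluded to before Theorem \ref{t1.4'}) is of the form
\[
2\Delta_f v-|\nabla v|^2+R+\frac{v-n}{\tau}\le 0
\]
for $v=-\log\bigl(H_f(p,\cdot,t)\,\cdot\,\text{normalization}\bigr)$, with an appropriate $\tau=\tau(t)$. The strategy is to integrate the resulting inequality along short space-time paths from $x_i$, working at a time scale $t_i\sim f(x_i)^{-1}$, which matches the radius $r_i$. In this way, the large value of $R\gtrsim\varepsilon_0 f(x_i)$ across the whole ball forces an extra decay factor of size $\exp(-c\,\varepsilon_0 f(x_i)\,t)$, accumulated over a time interval of order $1$. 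The outcome is an upper bound on $H_f(p,x_i,t)$ that is strictly better than the Gaussian-type profile by a factor $e^{-c\varepsilon_0 f(x_i)}$.

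\textbf{Step 3: contradict the lower bound.} Theorem \ref{t1.3'} gives
\[
H_f(p,x_i,t)\ge \bigl(4\pi(1-e^{-t})\bigr)^{-n/2}e^{-\bar\ell^p_f(x_i,t)}.
\]
I would estimate $\bar\ell^p_f$ from above by inserting a test curve, namely a suitable reparametrization of a minimal geodesic from $p$ to $x_i$, into its defining functional. The curvature term in that functional enters with the opposite sign to $f$, since $\ell_f=\bar\ell_f+f$ and $R+|\nabla f|^2=f$. Along the curve, large $R$ should therefore make $\bar\ell$ smaller, while Step 2 makes the kernel smaller. Comparing with the explicit Gaussian model, where $R\equiv0$ and both bounds are sharp, leaves a surplus of size $\varepsilon_0 f(x_i)$ on one side. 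This contradicts the two-sided bounds once $f(x_i)$ is large. Choosing $t$ of order $1$ lets Theorem \ref{t1.2'} supply the matching sharp upper bound $e^{-\frac{e^{-2\delta t}}{1-e^{-2\delta t}}f(x)}$ with $\delta$ close to $1/2$.

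\textbf{Main obstacle.} The hardest step is making Step 2/Step 3 quantitatively sharp. The gain $\varepsilon_0 f(x_i)$ must beat the losses, namely the $\delta>1/2$ slack in Theorem \ref{t1.2'} and the constants $C(n,\delta)e^{-\boldsymbol\mu}$. Only the exponential coefficients matter, but $\delta-\frac12$ must be chosen small relative to $\varepsilon_0$. If the path-integrated Harnack approach is too lossy, I would first try the pointwise alternative: evaluate the Harnack inequality at $x_i$ directly. The idea is to use the gradient estimate of $H_f$ to control $|\nabla v|^2$ by roughly $|\nabla f|^2/4$ times the time factor, and Theorems \ref{t1.1'} and \ref{t1.3'} to pin down $v$ up to $o(f)$. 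The Harnack inequality then reads $R(x_i)\le o(f(x_i))$ after the leading $f$-terms cancel. The gradient hypothesis is needed exactly to justify replacing the averaged quantity by its value at $x_i$ over the scale $f^{-1/2}$.
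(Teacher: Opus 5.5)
\textbf{There is a genuine gap: Steps 2 and 3 contain no working mechanism.} Your Step 1 is correct. It is in fact the final step of the paper's argument, run in reverse.

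\emph{Step 2.} Integrating Perelman's Harnack inequality along space-time curves gives comparison inequalities in which $R$ enters the action $\int\sqrt{\tau}\,(R+|\dot\gamma|^2)\,d\tau$ with a positive sign. So large curvature can only weaken the resulting bounds; this is the mechanism behind the lower bound of Theorem \ref{t1.3'}. It does not yield an upper bound on $H_f(p,x_i,t)$ improved by a factor $e^{-c\varepsilon_0 f(x_i)}$.

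Heuristically, no such improvement exists. You only know $R\ge\frac{\varepsilon_0}{2}f(x_i)$ on $B(x_i,\Lambda f(x_i)^{-1/2})$. Paths reaching $x_i$ at a time of order one cross this ball at speed of order $\sqrt{f(x_i)}$, hence in time of order $\Lambda f(x_i)^{-1}$. The extra killing from $R$ in the potential $\phi=\frac14(f+R-n)$ is therefore only a factor $e^{-O(\varepsilon_0\Lambda)}$, independent of $f(x_i)$. Relatedly, your ``time scale $f^{-1}$'' and ``interval of order $1$'' are not compatible.

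\emph{Step 3.} This step cannot produce the contradiction by itself. Theorems \ref{t1.2'} and \ref{t1.3'} hold on every shrinker and are insensitive to the size of $R$ near $x_i$. After using $R+|\nabla f|^2=f$, the functional defining $\bar\ell^p_f$ involves only $g$ and $f$, so ``large $R$ makes $\bar\ell$ smaller'' has no content. Since the bounds are in any case separated by a slack of order $(\delta-\frac12)f(x_i)$ plus the test-curve loss, all of the contradiction would have to come from Step 2.

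\emph{Pointwise fallback.} This fails for a related reason. Evaluating $w_f\le0$ at $x_i$ requires a lower bound on $\Delta b$ and the sharp asymptotics $\nabla b=\tau^{-1}\nabla f+o(\sqrt f)$. The gradient estimate used in Corollary \ref{c5.5} only gives $|\nabla\ln H_f|\le C\sqrt f$ with a non-sharp $C$, and nothing controls second derivatives pointwise.

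\textbf{The missing idea is to apply the Harnack inequality where its derivative terms have a favorable sign, and then localize that point.}
\begin{itemize}
\item The paper puts the pole at $x$ itself, takes a short time $t$, and considers a maximum point $y_{x,t}$ of $\tilde H_f(x,\cdot,t)=H_f(x,\cdot,t)e^{-f}$, which exists by Theorem \ref{t3.8}.
\item At $y_{x,t}$ one has $\nabla\tilde H_f=0$ and $\Delta\tilde H_f\le0$. Then $w_f\le0$ together with $b_f\ge\boldsymbol{\mu}$ (Proposition \ref{p6.2}) gives $R(y_{x,t})\le(n-\boldsymbol{\mu})/(1-e^{-t})$, with no derivative information needed.
\item To localize $y_{x,t}$: $\mathcal N_f\le0$ forces $\min b_f\le\frac n2$, i.e.\ $\max\tilde H_f\ge e^{-n/2}(4\pi(1-e^{-t}))^{-n/2}$.
\item Compare this with the mixed distance/potential upper bound of Proposition \ref{p7.6}, and use that $\sqrt f$ is $\frac12$-Lipschitz. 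For $F=f(x)$ and $t=A(F)/F$ this gives $d(x,y_{x,t})\le CA(F)/\sqrt F$ and $R(y_{x,t})\le 2(n-\boldsymbol{\mu})F/A(F)$.
\item Only now does your Step 1 enter, in the direct direction: $|R(x)-R(y_{x,t})|\le CA(F)\eta(F/2)F$.
\item Choosing $A(F)=\min\{F^{1/4},\eta(F/2)^{-1/2}\}$ makes both $R(y_{x,t})/F$ and this error tend to zero.
\end{itemize}
No contradiction argument is needed.
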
	

\subsection*{Organization of the paper}	
The paper is organized as follows. In Section 2, we review the measure transform, introduce the Schrödinger kernel $H_\phi$ and its relation with $H_f$, then give the explicit weighted heat kernel of Gaussian shrinker. In Section 3, we prove the ultracontractivity and Gaussian upper bound for $H_\phi$. Sections 4 and 5 establish the potential weighted upper bounds and the base-point decay estimates. In Section 6, by diffeomorphisms, we derive the exact relation between $H_f$ and the spacetime heat kernel of the self-similar Ricci flow induced by a Ricci shrinker. Section 7 introduces the pointed Nash and $\mathcal{W}$-entropies on Ricci shrinkers, proves their monotonicity, and establishes their long-time convergence to $\boldsymbol{\mu}$. In Section 8, we pull back Perelman’s reduced distance under Ricci flow to Ricci shrinkers, and define corresponding weighted reduced distance, then obtain the heat kernel lower bound by diffeomorphisms again. Finally, Section 9 applies the differential Harnack inequality to the geometry of scalar curvature, especially the quantitative low curvature selection, and proves $\left|\nabla R\right|=o(f^{3/2})\Longrightarrow R=o(f)$.

\section{Preliminaries}\label{sec-pe}

Let $H_f$ be the heat kernel associated with the weighted Laplace operator
\begin{align*}
	\Delta_f= \Delta - <\nabla f, \nabla\cdot>,
\end{align*}
and $H_{\phi}$ be the kernel relating to
\begin{align*}
	\Delta - \phi = \Delta - \frac{1}{4} \left(f+R-n\right),
\end{align*}
where $\phi=\frac{1}{4} \left(f+R-n\right)$ satisfies $\Delta e^{-\frac{f}{2}}= \phi e^{-\frac{f}{2}}$.
On a Ricci shrinker $(M^n,g,f)$, we have the identities
\begin{align}\label{1.1}
	R+ \Delta f =\frac{n}{2}, \quad R+|\nabla f|^2=f. 
\end{align}
By (\ref{1.1}), it's easy to check
\begin{align*}
	\Delta e^{-\frac{f}{2}}=-\frac{1}{4} \left\{2\Delta f -|\nabla f|^2 \right\} e^{-\frac{f}{2}} =\frac{1}{4} \left\{R+f-n \right\} e^{-\frac{f}{2}}. 
\end{align*}
As $\phi=\frac{1}{4} \left\{R+f-n \right\}$, the above equation is the same as
\begin{align*}
	\left( \Delta - \phi \right) e^{-\frac{f}{2}}=0. 
\end{align*}
Suppose $u$ satisfies $\Delta_f u=\lambda u$. Then we have
\begin{align*}
	(\Delta-\phi) \left\{ e^{-\frac{f}{2}} u\right\}&=\left\{(\Delta-\phi) e^{-\frac{f}{2}} \right\} u + e^{-\frac{f}{2}}  \Delta u + 2 \langle \nabla e^{-\frac{f}{2}}, \nabla u\rangle\\
	&=e^{-\frac{f}{2}}  \Delta u -e^{-\frac{f}{2}} \langle \nabla f, \nabla u\rangle\\
	&=e^{-\frac{f}{2}} \Delta_f u= \lambda \left\{ e^{-\frac{f}{2}} u \right\}. 
\end{align*}
Formally, by the eigenfunction expansion of heat kernel, we have 
\begin{align*}
	H_{\phi}=H_{f} e^{-\frac{f(x)+f(y)}{2}}.
\end{align*}
Clearly, since $H_f\longrightarrow \delta_x$ as $t\longrightarrow0$ under the measure $e^{-f}dv$, so $H_\phi$ also tends to $\delta_x$ function as $t\longrightarrow0$ under the standard volume measure $dv$. 

In order to understand further this type of heat kernel, we may change the measure $d\sigma=dx$ or the operator $\Delta_\sigma$ to a better one. In fact, the idea of changing measure or operator originates from stochastic analysis and the well-known method is called the Doob's transform. 

Let $\mathcal{D}:=C^\infty_0(M^n)$ and by a self-adjoint extension, $\Delta_\sigma|_{\mathcal{D}}$ can be regarded as an operator on $W^2_0(M^n, g,d\sigma)$, and $\Delta_{W^2_0}$ is a self-adjoint non-positive definite operator in $L^2(M^n,g,d\sigma)$. Note that here the space $W^2_0$ is not the usual Sobolev space $H^2_0$, see \cite[section 2.2]{AG2}. Any real-valued function $\phi\in L^2_{loc}(M^n,g,d\sigma)$ can be considered as a multiplication operator in $L^2(M^n,g,d\sigma)$ with the domain $\mathcal{D}$, and clearly, $-\Delta_\sigma+\phi$ is a symmetric operator on $\mathcal{D}$. Moreover, assume that a smooth positive function $h$ on $M^n$ satisfies the equation
\begin{equation}\label{1.2*}
	\Delta_\sigma h-\phi h=0,
\end{equation}
then the operator $\left(-\Delta_\sigma+\phi\right)|_{\mathcal{D}}$ in $L^2(M^n,g,d\sigma)$ admits a self-adjoint extension $\Phi$, and this extension is unique provided $(M^n,g)$ is complete. Let $h$ be a smooth positive function satisfying (\ref{1.2*}) and $d\tilde{\sigma}=h^2d\sigma$, then by \cite[Lemma 4.3]{AG2}, 
$$\Delta_{\tilde{\sigma}}=h^{-1}\circ(\Delta_\sigma-\phi)\circ h.$$
Hence for any $u\in \mathcal{D}$, there holds
$$\left(u,(-\Delta_\sigma+\phi)u\right)_{L^2(d\sigma)}=-\left(\frac{u}{h},\Delta_{\tilde{\sigma}}\left(\frac{u}{h}\right)\right)_{L^2(d\tilde{\sigma})}.$$ 
Applying Green's formula to both sides gives
\begin{equation}\label{1.3*}
	\int_{M^n}\left(\left|\nabla u\right|^2+\phi u^2\right)d\sigma=\int_{M^n}\left|\nabla(h^{-1}u)\right|^2d\tilde{\sigma}\geq 0,
\end{equation}
which implies the operator $-\Delta_\sigma+\phi$ is non-negative definite. Strictly, we have

\begin{theorem}[\cite{AG2}, Lemma 4.7]\label{p1.1}
	Let $h$ be a smooth positive function satisfying $\Delta_\sigma h= \phi h$, then the heat kernel $H_\phi(x,y,t)$ of the heat semigroup $e^{-t\Phi}$ in $L^2(M^n,g,d\sigma)$ and the heat kernel $H_{\tilde{\sigma}}(x,y,t)$ of $e^{t\Delta_{\tilde{\sigma}}}$ are related by
	\begin{equation*}\label{1.4}
		H_\phi(x,y,t)=h(x)h(y)H_{\tilde{\sigma}}(x,y,t).
	\end{equation*}
\end{theorem}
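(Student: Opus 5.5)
The plan is to prove the relation at the level of semigroups and then pass to kernels. The tool is the unitary map $U:L^2(M^n,d\tilde\sigma)\to L^2(M^n,d\sigma)$, $Uv=hv$. It is an isometry because $\int (hv)^2d\sigma=\int v^2 d\tilde\sigma$, and it is onto because $h>0$ is smooth.

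First I would check that $U$ intertwines the two quadratic forms on compactly supported functions. Take $u\in\mathcal D$ and put $v=h^{-1}u\in\mathcal D$. By (\ref{1.3*}), which comes from Green's formula together with $\Delta_\sigma h=\phi h$, we have
\[
\int_{M^n}\left(|\nabla u|^2+\phi u^2\right)d\sigma=\int_{M^n}|\nabla v|^2d\tilde\sigma .
\]
Equivalently, \cite[Lemma 4.3]{AG2} gives the operator identity $U^{-1}(-\Delta_\sigma+\phi)U=-\Delta_{\tilde\sigma}$ on $\mathcal D$.

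Next I would identify the self-adjoint extensions. The operator $\Phi$ is the (Friedrichs) self-adjoint extension of $(-\Delta_\sigma+\phi)|_{\mathcal D}$; this is the closure of the form above, which is nonnegative by (\ref{1.3*}). The operator $-\Delta_{\tilde\sigma}$ (the $W^2_0$ realization) is the Friedrichs extension of $-\Delta_{\tilde\sigma}|_{\mathcal D}$, i.e.\ it comes from closing the Dirichlet form on $\mathcal D$. Since $U$ maps $\mathcal D$ onto $\mathcal D$ and preserves both the forms and the $L^2$ norms, it carries one form closure onto the other. Hence $\Phi=U(-\Delta_{\tilde\sigma})U^{-1}$ as self-adjoint operators. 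By the functional calculus this gives $e^{-t\Phi}=U e^{t\Delta_{\tilde\sigma}}U^{-1}$.

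This step, matching the extensions, is where I expect the main difficulty. On a complete manifold I would lean on the paper's remark that the extension is unique. Otherwise I would work only with the minimal (Friedrichs) extensions on both sides, which is exactly what the form argument handles and avoids any essential self-adjointness issue.

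Finally I would read off the kernels. For $w\in L^2(d\sigma)$,
\[
e^{-t\Phi}w(x)=h(x)\int H_{\tilde\sigma}(x,y,t)\,\frac{w(y)}{h(y)}\,h^2(y)\,d\sigma(y)=\int h(x)h(y)H_{\tilde\sigma}(x,y,t)\,w(y)\,d\sigma(y).
\]
The heat kernel is unique as the smooth integral kernel of the semigroup, so $H_\phi(x,y,t)=h(x)h(y)H_{\tilde\sigma}(x,y,t)$.
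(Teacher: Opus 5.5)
Your proposal is correct and follows the route the paper relies on. The paper gives no proof beyond citing \cite[Lemma 4.7]{AG2}, but it sets up exactly your ingredients: the conjugation $\Delta_{\tilde\sigma}=h^{-1}\circ(\Delta_\sigma-\phi)\circ h$ from \cite[Lemma 4.3]{AG2}, the form identity (\ref{1.3*}), and the identification of $\Phi$ as the self-adjoint extension (unique on a complete manifold), after which the unitary map $v\mapsto hv$ transports semigroups and hence kernels. Your handling of the extension issue through form closures on $\mathcal D$, which $U$ maps onto itself, is the right way to make this rigorous; it also shows that uniqueness of the extension of $-\Delta_\sigma+\phi$ on a complete manifold follows from essential self-adjointness of $\Delta_{\tilde\sigma}$.
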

As we explained previously, in this paper, we just take $h(x)=e^{-\frac{f(x)}{2}}$. 	For details of this part, especially the existence of $H_f$, see \cite{AG2}.

For example, on one dimensional $\left(\mathbb{R}, g_{E}, e^{-\frac{x^2}{4}}dx\right)$, $f=\frac{x^2}{4}$ and the weighted Laplace operator is $\Delta_f=\frac{d^2}{dx^2}-\frac{x}{2}\frac{d}{dx}$. It is easy to check 
$$h_k''-\frac{x}{2}h_k'+\frac{k}{2}h_k=0,$$ 
hence the eigenvalues and corresponding eigenfunctions are $\left\lbrace- \frac{k}{2}\right\rbrace_{k=0}^{\infty}$ and Hermite polynomials
$$h_k(x)=(-1)^ke^{\frac{x^2}{4}}\frac{d^k}{dx^k}e^{-\frac{x^2}{4}},$$
which satisfy
\begin{align*}
	\int_{\mathbb{R}}h_j(x)h_k(x)e^{-\frac{x^2}{4}}dx
	=\left\{\begin{array}{ccc}
		0, & j\neq k, \\
		\frac{k!\sqrt{\pi}}{2^{k-1}}, & j=k. 
	\end{array}\right.
\end{align*}
Due to the completeness of Hermite polynomials in $L^2\left(\mathbb{R},g,e^{-\frac{x^2}{4}}dx\right)$, the weighted heat kernel $H_f(x,y,t)$ on $\left(\mathbb{R},g,e^{-\frac{x^2}{4}}dx\right)$ is given by (see \cite[Theorem 7.13]{Da} for the computational method)
\begin{align*}
	H_f(x,y,t)&=\sum_{k=0}^{\infty}e^{-\frac{kt}{2}}\frac{2^{k-1}h_k(x)h_k(y)}{k!\sqrt{\pi}}\\
	&=\frac{e^{\frac{t}{4}}}{\left(8\pi\sinh{\frac{t}{2}}\right)^{\frac{1}{2}}}\exp\left(\frac{2xye^{-\frac{t}{2}}-(x^2+y^2)e^{-t}}{4(1-e^{-t})}\right)\\
	&=\frac{1}{\left(4\pi (1-e^{-t}) \right)^{\frac12}}
	\exp \left( -\frac{(x-y)^2}{8 \sinh \frac{t}{2}} +\frac{x^2+y^2}{8}\left[ 1-\tanh \frac{t}{4} \right] \right).
\end{align*}
Consequently, we have
\begin{align*}
	H_{\phi}(x,y,t)&=H_{f}\left( x,y,t\right) e^{-\frac{x^2+y^2}{8}}\\
	&=\frac{1}{\left(4\pi (1-e^{-t}) \right)^{\frac12}}
	\exp \left\{ -\frac{(x-y)^2}{8 \sinh \frac{t}{2}} -\frac{x^2+y^2}{8}\tanh \frac{t}{4} \right\}\\
	&=\frac{e^{\frac{t}{4}}}{(8\pi\sinh \frac{t}{2})^{\frac12}} 
	\exp \left\{ -\frac{(x-y)^2}{8 \sinh \frac{t}{2}} -\frac{x^2+y^2}{8}\tanh \frac{t}{4} \right\}. 
\end{align*}

Furthermore, for any Ricci shrinker $(M^n,g,f)$, its weighted volume $V_f(M^n,g)=	\int_{M^n}e^{-f}dv$ is finite, especially, if we normalize $f$ as usual, then $V_f(M^n,g)=(4\pi)^{\frac{n}{2}}e^{\boldsymbol{\mu}}$, where the $\boldsymbol{\mu}$ is the Perelman functional. Note that $V_f<\infty$ implies the weighted heat kernel $H_f$ is complete, i.e., $H_fe^{-f}dv$ is a probability measure. As a consequence of \cite[Example 3.19]{AG2} and Theorem \ref{p1.1}, we have
\begin{proposition}\label{p1.2}
	For any Ricci shrinker $(M^n,g,f)$, the weighted heat kernel 
	\begin{equation*}
		H_f(x,y,t)\longrightarrow (4\pi)^{-\frac{n}{2}}e^{-\boldsymbol{\mu}} \hspace*{0.5em}as\hspace*{0.5em}t\longrightarrow\infty,
	\end{equation*}
	and the Mehler heat kernel
	\begin{equation*}
		H_\phi(x,y,t)\longrightarrow (4\pi)^{-\frac{n}{2}}e^{-\boldsymbol{\mu}-\frac{f(x)+f(y)}{2}} \hspace*{0.5em}as\hspace*{0.5em}t\longrightarrow\infty,
	\end{equation*}
	where we set $h=e^{-\frac{f}{2}}$ in Theorem \ref{p1.1}.
\end{proposition}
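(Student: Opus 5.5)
The plan is to prove the statement for the weighted kernel $H_f$ first, and then obtain the statement for $H_\phi$ from the identity $H_\phi(x,y,t)=e^{-\frac{f(x)+f(y)}{2}}H_f(x,y,t)$ of Theorem \ref{p1.1}, taking $h=e^{-f/2}$.

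The main ingredient is the spectral gap of $-\Delta_f$ on $L^2(e^{-f}dv)$. Since $Rc_f=\frac12 g$, the Bakry--\'Emery condition $CD(\frac12,\infty)$ holds. Because $e^{-f}dv$ is a finite measure, this gives the weighted Poincar\'e inequality
\begin{align*}
\int_{M^n}\Big(u-\bar u\Big)^2e^{-f}dv\le 2\int_{M^n}|\nabla u|^2e^{-f}dv,
\end{align*}
where $\bar u$ is the weighted mean. Equivalently, $\lambda_1(-\Delta_f)\ge\frac12$, with the constants forming the bottom eigenspace. I would also use the fact that the spectrum of $-\Delta_f$ is discrete, since the potential $\phi=\frac14(f+R-n)\to\infty$ after the Doob transform. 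It is enough, however, to argue with the spectral theorem for the semigroup: the heat semigroup $P_t=e^{t\Delta_f}$ contracts $L^2_0(e^{-f}dv)$, the orthogonal complement of the constants, by a factor $e^{-t/2}$.

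The argument proceeds in three steps.

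First, fix $x$ and $y$. By the semigroup property and symmetry,
\begin{align*}
H_f(x,y,t)=\int_{M^n} H_f(x,z,\tfrac t2)H_f(z,y,\tfrac t2)e^{-f(z)}dz .
\end{align*}
Subtract the mean. The mean of $H_f(x,\cdot,s)$ equals $V_f^{-1}=(4\pi)^{-\frac n2}e^{-\boldsymbol\mu}$, because of stochastic completeness and the normalization of $V_f$. This gives
\begin{align*}
H_f(x,y,t)-V_f^{-1}=\int_{M^n}\big(H_f(x,z,\tfrac t2)-V_f^{-1}\big)\big(H_f(z,y,\tfrac t2)-V_f^{-1}\big)e^{-f(z)}dz .
\end{align*}

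Second, apply Cauchy--Schwarz to this integral. Writing $s=t/2$, it reduces everything to the diagonal quantity $D(x,s)=H_f(x,x,2s)-V_f^{-1}=\|H_f(x,\cdot,s)-V_f^{-1}\|^2_{L^2(e^{-f})}$. The spectral gap shows that $D(x,s)e^{s}$ is nonincreasing in $s$. Indeed, for $s\ge s_0$ we have $H_f(x,\cdot,s)-V_f^{-1}=P_{s-s_0}\big(H_f(x,\cdot,s_0)-V_f^{-1}\big)$, and so
\begin{align*}
D(x,s)\le e^{-(s-s_0)}D(x,s_0).
\end{align*}

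Third, choose a fixed $s_0$, for instance $s_0=1$. A finite bound on $H_f(x,x,2)$ then gives the decay
\begin{align*}
|H_f(x,y,t)-V_f^{-1}|\le C e^{-t/2}
\end{align*}
for $t\ge2$, with $C$ depending on $x$ and $y$. Letting $t\to\infty$ yields $H_f\to(4\pi)^{-\frac n2}e^{-\boldsymbol\mu}$ as claimed. Finiteness of $H_f(x,x,2)$ is automatic, because the kernel is smooth. So the convergence holds pointwise for each fixed pair $(x,y)$, and in fact locally uniformly. Multiplying by $e^{-\frac{f(x)+f(y)}2}$ gives the statement for $H_\phi$.

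The main obstacle is justifying the functional-analytic setup on a noncompact manifold. Three things must be checked: that $H_f(x,\cdot,s)\in L^2(e^{-f}dv)$, that its mean is exactly $V_f^{-1}$ (stochastic completeness), and that the spectral gap applies on the self-adjoint extension. For stochastic completeness I would cite \cite[Example 3.19]{AG2}, using that $V_f<\infty$. The spectral gap itself follows from the Bochner formula $\frac12\Delta_f|\nabla u|^2\ge\langle\nabla u,\nabla\Delta_f u\rangle+\frac12|\nabla u|^2$ via the standard $\Gamma_2$ argument. Alternatively, it can be read off from the discreteness of the spectrum of the Schr\"odinger operator $-\Delta+\phi$, whose ground state is $e^{-f/2}$.
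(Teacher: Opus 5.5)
Your proof is correct, but it takes a different route from the paper. The paper does no computation here. It notes that $V_f=(4\pi)^{\frac n2}e^{\boldsymbol{\mu}}<\infty$ and cites the general fact \cite[Example 3.19]{AG2}: on any weighted manifold of finite volume, the heat kernel converges to $1/V_f$ as $t\to\infty$. It then transfers this to $H_\phi$ through Theorem \ref{p1.1} with $h=e^{-f/2}$. That cited result uses no curvature and gives no rate.

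You instead use the shrinker structure. The condition $Rc_f=\frac12 g$ gives the Bakry--\'Emery bound $\lambda_1(-\Delta_f)\ge\frac12$. Your mean-subtracted Chapman--Kolmogorov identity, combined with Cauchy--Schwarz, turns this gap into the quantitative estimate
\begin{align*}
\left|H_f(x,y,t)-(4\pi)^{-\frac n2}e^{-\boldsymbol{\mu}}\right|\le e^{1-\frac t2}\sqrt{D(x,1)D(y,1)},\quad t\ge 2.
\end{align*}
The rate $e^{-t/2}$ is sharp on the Gaussian shrinker, whose first nonzero eigenvalue is $\frac12$ (the $k=1$ Hermite mode).

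The individual steps hold up:
\begin{itemize}
\item Stochastic completeness, used in both variables via symmetry, is exactly what makes the mean-subtraction identities and $P_\tau 1=1$ valid.
\item Completeness of $g$ lets the Poincar\'e inequality for $C_c^\infty$ functions pass to the form domain of the self-adjoint extension.
\end{itemize}

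If you want the constant explicit, Theorem \ref{t3.8} (or, more crudely, Proposition \ref{p6.2}) bounds $D(x,1)\le H_f(x,x,2)$ in terms of $e^{-\boldsymbol{\mu}}$ and $f(x)$. This gives a bound uniform on sublevel sets of $f$.

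One caveat concerns your last alternative. Discreteness of the spectrum of $-\Delta+\phi$ alone gives only some positive gap. That is still enough for convergence, but it does not give the value $\frac12$ or the rate $e^{-t/2}$.
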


\section{Ultracontractivity}
First we show the ultracontractivity of $H_\phi$.
\begin{theorem}\label{t2.1}
	For any Ricci shrinker $(M^n,g,f)$ and $x,y\in M^n,t>0$,
	\begin{equation}\label{2.6*}
		H_\phi(x,y,t)\leq \gamma(t):=\left\{
		\begin{array}{lr}
			(4\pi t)^{-\frac{n}{2}}e^{-\boldsymbol{\mu}+\frac{n}{4}t} ,&t\leq1,\\
			(4\pi )^{-\frac{n}{2}}e^{-\boldsymbol{\mu}+\frac{n}{4}} ,&t>1.
		\end{array}
		\right.	
	\end{equation}
\end{theorem}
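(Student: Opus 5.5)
We would follow Davies' method: a logarithmic Sobolev inequality whose parameter can be varied gives ultracontractivity of the semigroup $e^{-t\Phi}$ on $L^2(M^n,dv)$. The input is the Li--Wang logarithmic Sobolev inequality on Ricci shrinkers \cite{LW}. For every $\tau>0$ and every $u\in C_0^\infty(M^n)$ with $\int u^2\,dv=1$, it reads
\begin{align*}
\int_{M^n} u^2\log u^2\,dv\le \tau\int_{M^n}\left(4|\nabla u|^2+Ru^2\right)dv-\boldsymbol{\mu}-n-\frac n2\log(4\pi\tau).
\end{align*}
Since $R\ge0$ on a shrinker, we want to trade the $R$ term for the potential $\phi=\frac14(f+R-n)$. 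We rewrite the right-hand side using the quadratic form of $\Phi$, namely $\int(|\nabla u|^2+\phi u^2)dv$, which is nonnegative by (\ref{1.3*}).

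The key step is comparing the two quadratic forms. We have
\begin{align*}
4\int\left(|\nabla u|^2+\phi u^2\right)dv=\int\left(4|\nabla u|^2+Ru^2\right)dv+\int (f-n)u^2\,dv .
\end{align*}
The term $\int fu^2$ has the wrong sign to discard directly. Instead we use the alternative Li--Wang form with the shrinker measure, or equivalently we substitute $u=e^{-f/2}w$ into the entropy inequality. Either way the $f$-terms should cancel against $|\nabla f|^2$ via $R+|\nabla f|^2=f$ and $\Delta f=\frac n2-R$. The target is the inequality
\begin{align*}
\int u^2\log u^2\,dv\le 4\tau\int\left(|\nabla u|^2+\phi u^2\right)dv+n\tau-\boldsymbol{\mu}-n-\frac n2\log(4\pi\tau)
\end{align*}
for all $\tau\in(0,1]$. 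Concretely, we would verify it by writing $\int(4|\nabla u|^2+Ru^2)-4\int(|\nabla u|^2+\phi u^2)=\int(n-f)u^2$. We then need $\tau\int(n-f)u^2\le n\tau$, which holds because $f\ge0$ after normalization. So the constant $n\tau$ absorbs the difference, and this produces the factor $e^{\frac n4 t}$ in $\gamma(t)$.

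Next comes Davies' machinery (\cite{Da2}, Theorem 2.2.7 type). Suppose a log-Sobolev inequality $\int u^2\log u^2\le \varepsilon Q(u)+\beta(\varepsilon)$ holds with $\beta$ decreasing. Then $\|e^{-t\Phi}\|_{2\to\infty}$ and $\|e^{-t\Phi}\|_{1\to\infty}$ are bounded by $\exp\left(\frac1t\int_0^t\beta(\varepsilon(s))\,ds\right)$ for an admissible choice of $\varepsilon(s)$. The choice $\varepsilon=4\tau$ with $\tau\sim t$ is constant along Davies' scheme. Integrating $\beta(\tau)=n\tau-\boldsymbol{\mu}-n-\frac n2\log(4\pi\tau)$ with the standard $\varepsilon(s)$ gives exactly $(4\pi t)^{-n/2}e^{-\boldsymbol{\mu}+\frac n4 t}$ for $t\le1$. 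The $-n$ compensates the averaging of the logarithm, as in the Euclidean sharp case. Since $H_\phi$ is symmetric and positive, the kernel is pointwise bounded by the $1\to\infty$ norm, which gives the first line of (\ref{2.6*}).

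For $t>1$ we use monotonicity. Because $\Phi\ge0$, the map $t\mapsto \sup_{x}H_\phi(x,x,t)=\|e^{-t\Phi/2}\|_{2\to\infty}^2$ is nonincreasing. By the semigroup property and Cauchy--Schwarz, $H_\phi(x,y,t)\le \sqrt{H_\phi(x,x,t)H_\phi(y,y,t)}\le \gamma(1)$, which gives the second line.

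The main obstacle is the first step: obtaining a log-Sobolev inequality with exactly the quadratic form of $-\Delta+\phi$ and the correct constants $n\tau$ and $-\boldsymbol{\mu}-n$. Justifying it for all compactly supported $u$ also needs care; we would first try the direct comparison using $f\ge0$ described above. A secondary concern is matching Davies' integration constants so that the prefactor is exactly $(4\pi t)^{-n/2}$.
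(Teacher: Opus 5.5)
Your outer structure matches the paper's: Davies' $L^{p(t)}$-norm flow driven by the Li--Wang log-Sobolev inequality, the bound $f\ge0$, and for $t>1$ the monotonicity of $t\mapsto H_\phi(x,x,t)$ from the nonnegativity of the form (\ref{1.3*}) plus Cauchy--Schwarz. That last step is exactly the paper's. Your modified inequality with $4\tau\int(|\nabla u|^2+\phi u^2)+n\tau$ is also correct, for every $\tau>0$.

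\textbf{The gap is in the next step.} You feed that inequality into Davies' machinery as if $e^{-t\Phi}$ were a symmetric Markov semigroup. It is not, because $\phi$ takes negative values (on the Gaussian shrinker $\phi(0)=-\frac n4$). The Markov property is used exactly in
\begin{align*}
\int_{M^n}u^{p-1}\,\Phi u\,dv=\frac{4(p-1)}{p^2}\int_{M^n}\left(\left|\nabla u^{\frac p2}\right|^2+\phi u^p\right)dv+\frac{(p-2)^2}{p^2}\int_{M^n}\phi u^p\,dv,
\end{align*}
and the last term has no sign. Suppose you drop it and integrate your $\beta(\tau)=n\tau-\boldsymbol{\mu}-n-\frac n2\log(4\pi\tau)$ along the sharp schedule $p(s)=\frac{T}{T-s}$, $\tau(s)=\frac{s(T-s)}{T}$ (weight $p'/p^2=\frac1T$). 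Then the $n\tau$ term produces only $e^{nT/6}$. The resulting bound $(4\pi T)^{-n/2}e^{-\boldsymbol{\mu}+nT/6}$ is false: on the Gaussian shrinker $H_\phi(0,0,T)=(4\pi(1-e^{-T}))^{-n/2}$, while $\frac{T}{1-e^{-T}}=1+\frac T2+O(T^2)>e^{T/3}$ for small $T$.

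So ``the constant $n\tau$ absorbs the difference and produces $e^{\frac n4t}$'' is not the right mechanism. The missing $e^{nT/12}$ is precisely the cost of the discarded term once you bound it using $\phi\ge-\frac n4$. Two further problems with invoking Davies' Theorem 2.2.7:
\begin{itemize}
\item Your $\beta$ is not monotone ($\beta'(\tau)=n-\frac n{2\tau}$), so the theorem does not apply as stated.
\item Its averaged exponent $\frac1t\int_0^t\beta$ is not sharp anyway; it overshoots $(4\pi t)^{-n/2}$ by a dimensional factor already on $\mathbb{R}^n$.
\end{itemize}
The flow therefore has to be run by hand.

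\textbf{The cleanest repair is to shift.} The operator $\Phi+\frac n4=-\Delta+\frac14(f+R)$ has nonnegative potential, hence generates a sub-Markovian semigroup. Li--Wang together with $f\ge0$ gives directly
\begin{align*}
\int_{M^n}u^2\log u^2\,dv\le4\tau\int_{M^n}\left(|\nabla u|^2+\frac{f+R}{4}u^2\right)dv-\boldsymbol{\mu}-n-\frac n2\log(4\pi\tau),
\end{align*}
with no $n\tau$ term. Run the $L^1\to L^\infty$ flow with $p(s)=\frac T{T-s}$ and $\tau=\frac{p-1}{p'}$ on Dirichlet kernels of an exhaustion. This gives $\|e^{-T(\Phi+\frac n4)}\|_{1\to\infty}\le(4\pi T)^{-n/2}e^{-\boldsymbol{\mu}}$, and the whole factor $e^{\frac n4T}$ comes from undoing the shift.

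This is equivalent to the paper's bookkeeping. The paper inserts $\phi\ge\frac14(R-n)$ into $\frac{d}{dt}\ln\|u\|_{p(t)}$ before applying the log-Sobolev inequality with $\tau=\frac{p-1}{p'}$. Then $R$ ends up with coefficient $p-1-\frac{p^2}4=-\left(\frac p2-1\right)^2\le0$, and the constant $-n$ contributes exactly $\frac n4$ per unit time.
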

\begin{proof}
	Since the heat kernel $H_\phi(x,y,t)$ can be approximated by $H_\phi^{\Omega_i}(x,y,t)$ for any compact exhaustion $\left\lbrace\Omega_i \right\rbrace_{i=0}^\infty$ of $M^n$, hence we only need to prove (\ref{2.6*}) for any compact $\Omega \subset M^n$ with smooth boundary. Besides this point of view, we can also adopt standard cut-off arguments as in Li-Wang \cite{LW} instead of approximating.
	
	The following idea of proving the ultracontractivity via Log-Sobolev inequality originates from \cite{Da2}.	Let $u(y,0)$ be any non-negative smooth function with compact support in $\Omega$, then for any $T>0$ and $t\in\left[0,T\right]$,
	$$u(x,t)=\int_{\Omega}u(y,0)H_\phi^{\Omega}(x,y,t)dv(y) \in C^\infty(\Omega)\bigcap W^2_0(\Omega)$$
	is a solution to the heat equation
	\begin{equation}\label{2.7*}
		\partial_t u=(\Delta-\phi)u.
	\end{equation}
	Set 
	$$\left\|u\right\|_{p(t)}=\left(\int_{\Omega}\left|u\right|^{p(t)}\right)^{\frac{1}{p(t)}},$$
	where $p(t)=\frac{T}{T-t}$, $t\in\left[0,T\right]$, which implies $p(0)=1$ and $p(T)=\infty$.
	By (\ref{2.7*}), we compute
	\begin{align*}
		\partial_t\left\|u\right\|_{p(t)}=-\frac{p'}{p^2}\left\|u\right\|_{p(t)}\ln\left(\left\|u\right\|_{p(t)}^{p(t)}\right)+\frac{\left\|u\right\|_{p(t)}^{1-p(t)}}{p(t)}\left[p'\int_{\Omega}u^p\ln udv+p\int_{\Omega}u^{p-1}(\Delta u-\phi u)dv\right].
	\end{align*}
	Multiplying by $p^2\left\|u\right\|_{p(t)}^{p(t)}$ on both sides of the above equality and integrating by parts gives
	\begin{align*}
		&p^2\left\|u\right\|_{p(t)}^{p(t)}\partial_t\left\|u\right\|_{p(t)}\\
		&=-p'\left\|u\right\|_{p(t)}^{p(t)+1}\ln\left(\left\|u\right\|_{p(t)}^{p(t)}\right)+pp'\left\|u\right\|_{p(t)}\int_{\Omega}u^p\ln udv\\
		&\quad-p^2(p-1)\left\|u\right\|_{p(t)}\int_{\Omega}u^{p-2}\left| \nabla u\right|^2dv-p^2\left\|u\right\|_{p(t)}\int_{\Omega}\phi u^pdv.
	\end{align*}
	Dividing by $\left\|u\right\|_{p(t)}^{p(t)+1}$ yields
	\begin{align*}
		&p^2\partial_t\ln \left\|u\right\|_{p(t)}\\
		&=-p'\ln \left\|u\right\|_{p(t)}^{p(t)}+\frac{pp'}{\left\|u\right\|_{p(t)}^{p(t)}}\int_{\Omega}u^p\ln udv-\frac{4(p-1)}{\left\|u\right\|_{p(t)}^{p(t)}}\int_{\Omega}\left|\nabla u^{\frac{p}{2}}\right|^2dv-\frac{p^2}{\left\|u\right\|_{p(t)}^{p(t)}}\int_{\Omega}\phi u^pdv.
	\end{align*}
	Let 
	$$w=\frac{u^{\frac{p}{2}}}{\left\|u^{\frac{p}{2}}\right\|_2},$$
	then $\left\|w\right\|_2=1.$
	Therefore
	\begin{align}\label{2.8*}
		p^2\partial_t\ln \left\|u\right\|_{p(t)}=p'\left[\int_{\Omega}w^2\ln w^2dv-\frac{4(p-1)}{p'}\int_{\Omega}\left|\nabla w\right|^2dv-\frac{p^2}{p'}\int_{\Omega}\phi w^2dv\right].
	\end{align}
	Recall $\phi=\frac{1}{4}(f+R-n)\geq\frac{1}{4}(R-n)$,
	so
	\begin{align*}
		p^2\partial_t\ln \left\|u\right\|_{p(t)}\leq p'\left[\int_{\Omega}w^2\ln w^2dv-\frac{4(p-1)}{p'}\int_{\Omega}\left|\nabla w\right|^2dv-\frac{p^2}{4p'}\int_{\Omega}(R-n)w^2dv\right].
	\end{align*}
	Now choosing $\tau=\frac{p-1}{p'}$ in the optimal Log-Sobolev inequality on Ricci shrinkers proved by \cite[Theorem 1]{LW} gives
	$$\int_{\Omega}w^2\ln w^2dv\leq\frac{p-1}{p'}\int_{\Omega}\left(4\left|\nabla w\right|^2+Rw^2\right)dv-\left[\boldsymbol{\mu}+n+\frac{n}{2}\ln(4\pi\tau)\right].$$
	Plugging this inequality into (\ref{2.8*}) and noticing $p\geq1$, we arrive at
	\begin{align*}
		\partial_t\ln \left\|u\right\|_{p(t)}&\leq \frac{p'}{p^2}\left( \frac{p-1-\frac{p^2}{4}}{p'}\int_{\Omega}Rw^2dv-\left[\boldsymbol{\mu}+n+\frac{n}{2}\ln(4\pi\tau)\right]\right)+\frac{n}{4}\\
		&\leq -\frac{p'}{p^2}\left[\boldsymbol{\mu}+n+\frac{n}{2}\ln(4\pi\tau)\right]+\frac{n}{4}\\
		&=-\frac{1}{T}\left[\boldsymbol{\mu}+n+\frac{n}{2}\ln\frac{4\pi t(T-t)}{T}\right]+\frac{n}{4}.
	\end{align*}
	Integrating this inequality from $0$ to $T$ implies
	$$\left\|u(x,T)\right\|_\infty\leq \frac{e^{-\boldsymbol{\mu}+\frac{nT}{4}}}{(4\pi T)^\frac{n}{2}}\left\|u(x,0)\right\|_1.$$
	Since $u(y,0)$ is an arbitrary non-negative function with compact support in $\Omega$, $\Omega$ is arbitrary and 
	$$u(x,T)=\int_{\Omega}u(y,0)H_\phi^{\Omega}(x,y,T)dv(y),$$
	hence we conclude
	\begin{equation}\label{2.9*}
		H_\phi^{\Omega}(x,y,T)\leq\frac{e^{-\boldsymbol{\mu}+\frac{nT}{4}}}{(4\pi T)^\frac{n}{2}}.
	\end{equation}
	The upper bound of (\ref{2.9*}) behaves worse and worse as $t$ increases, but due to the semigroup property of $H_\phi^{\Omega}(x,y,t)$, we know 
	$H_\phi^{\Omega}(x,y,t)\leq\sqrt{H_\phi^{\Omega}(x,x,t)H_\phi^{\Omega}(y,y,t)}$ and moreover,  by (\ref{1.3*}),
	\begin{align}
		\frac{d}{dt}H_\phi^{\Omega}(x,x,2t)= \frac{d}{dt}\int_{\Omega}\left(H_\phi^{\Omega}(x,y,t)\right)^2dv&=-2\int_{\Omega}H_\phi^{\Omega}(x,y,t)\left(-\Delta+\phi\right)H_\phi^{\Omega}(x,y,t)dv\nonumber\\
		&=-2\int_{\Omega}\left(\left|\nabla H_\phi^{\Omega}(x,y,t)\right|^2+\phi\left(H_\phi^{\Omega}(x,y,t)\right)^2\right)dv \nonumber\\
		&\leq0.\label{2.5*}
	\end{align}
	hence for $t>1$, we have
	\begin{equation}\label{2.10*}
		H_\phi^\Omega(x,y,t)\leq(4\pi )^{-\frac{n}{2}}e^{-\boldsymbol{\mu}+\frac{n}{4}}.
	\end{equation}
	Combining (\ref{2.9*}) and (\ref{2.10*}) completes the proof.
\end{proof}

Utilizing the above upper bound of $H_\phi(x,y,t)$ and the iterative techniques of \cite{AG}, we are ready to give the Gaussian upper bound of $H_\phi(x,y,t)$ which is effective for short time. On the basis of this short time Gaussian upper bound, we are able to obtain a long time upper bound, and this will be proved later. Now we concentrate on the short time estimates.

As before, we adopt an approximation argument. Let $\Omega$ be any fixed compact set on $(M^n,g,f)$, $K\subset\subset \Omega$ be compact and $u(x,t)\in W^2_0(\Omega)\bigcap C^\infty(\Omega)$ be a solution to 
\begin{equation}\label{2.11*}
	\partial_t u=(\Delta-\phi)u
\end{equation}
with an initial condition having a support in $K$. For $D>2$, define
$$E_D(t)=\int_{\Omega}u^2(x,t)\exp\left(\frac{d^2\left(x,K\right)}{Dt}\right)dv.$$
\begin{proposition}\label{p2.2*}
	For any $\beta>1$, $D>2$ and $T>t>0$, suppose for all $0<s< T$ that  $I(s)=\int_{\Omega}u^2(\cdot,s)dv\leq\gamma(2s)$, then there exists some $\theta=\theta(\beta, D,n)$ such that
	\begin{equation}\label{2.12*}
		E_D(t)\leq \frac{\gamma(t)}{\theta^{\frac{n}{2}}},
	\end{equation}
	where $\gamma(t)$ is the same as in (\ref{2.6*}).
\end{proposition}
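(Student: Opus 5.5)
The plan is to run Grigor'yan's iterative integrated maximum principle \cite{AG}, adapted to the Schrödinger operator $\Delta-\phi$. The potential $\phi$ is not bounded below by a nonnegative constant (only $\phi\ge \frac14(R-n)\ge -\frac n4$), so I will not use a lower bound for $\phi$. Instead I will control the potential term through the nonnegativity (\ref{1.3*}) coming from the Doob transform with $h=e^{-f/2}$. Since $u(\cdot,t)\in W^2_0(\Omega)$, (\ref{1.3*}) applies to functions of the form $w=u e^{\xi/2}$ with $\xi$ Lipschitz.

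\textbf{Step 1 (weighted monotonicity).} Let $\xi(x,s)$ be Lipschitz in $x$ with
\begin{equation*}
\partial_s\xi+\tfrac12|\nabla\xi|^2\le 0 .
\end{equation*}
Differentiating $\int_\Omega u^2e^{\xi}dv$ and setting $w=ue^{\xi/2}$, the identity
\begin{equation*}
|\nabla w|^2=e^\xi\left(|\nabla u|^2+u\langle\nabla u,\nabla\xi\rangle+\tfrac14u^2|\nabla\xi|^2\right)
\end{equation*}
gives
\begin{equation*}
\frac{d}{ds}\int_\Omega u^2e^\xi dv=-2\int_\Omega\left(|\nabla w|^2+\phi w^2\right)dv+\int_\Omega u^2e^\xi\left(\partial_s\xi+\tfrac12|\nabla\xi|^2\right)dv .
\end{equation*}
The first term is $\le0$ by (\ref{1.3*}) and the second is $\le 0$ by the assumption on $\xi$, so $\int u^2e^\xi$ is nonincreasing. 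Two choices of $\xi$ are used. The first is $\xi=d^2(x,K)/(Ds)$ with $D\ge2$, which shows that $E_D$ is nonincreasing along the flow. The second is $\xi=-\frac{(r-d(x,K))_+^2}{2(s_0-s)}$ type weights, which yield the Davies--Gaffney-type lemma: for $R>r$ and $\tau<t$,
\begin{equation*}
I_R(t)\le I_r(\tau)+\exp\left(-\frac{(R-r)^2}{2(t-\tau)}\right)I(\tau),
\end{equation*}
where $I_r(s)=\int_{\Omega\setminus B_r(K)}u^2(\cdot,s)dv$.

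\textbf{Step 2 (iteration).} Fix $t$ and $R$. Choose decreasing radii $R_k$ with $R_0=R$, $R_k\searrow R/2$ and $R_k-R_{k+1}\sim R/k^2$, and times $t_k=t\beta^{-k}$. Apply the lemma repeatedly to get
\begin{equation*}
I_{R}(t)\le\sum_k\exp\left(-\frac{(R_k-R_{k+1})^2}{2(t_k-t_{k+1})}\right)\gamma(2t_{k+1})+\lim_k I_{R_k}(t_k).
\end{equation*}
The limit term vanishes because the initial data is supported in $K$ and $R_k> R/2>0$. From the explicit form (\ref{2.6*}) one has $\gamma(s)/\gamma(s/\beta)\le \beta^{n/2}$ and $\gamma$ is nonincreasing in the sense needed, so $\gamma(2t_{k+1})\le \beta^{(k+1)n/2}\gamma(t)$. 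The Gaussian factors $\exp(-cR^2\beta^k/(k^4 t))$ beat the polynomial-exponential growth $\beta^{kn/2}$, giving
\begin{equation*}
I_R(t)\le C(\beta,n)\gamma(t)\exp\left(-\frac{R^2}{D't}\right)
\end{equation*}
for any $D'>2$, after tuning the sequences.

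\textbf{Step 3 (summing into $E_D$).} Decompose $\Omega$ into annuli $\{R_j\le d(x,K)<R_{j+1}\}$ with $R_j=j\sqrt t$. On the $j$-th annulus the weight is at most $e^{R_{j+1}^2/(Dt)}$, while the mass there is at most $C\gamma(t)e^{-R_j^2/(D't)}$. Choose $D'\in(2,D)$. The resulting series $\sum_j e^{(j+1)^2/D-j^2/D'}$ converges, and also covers the ball $d\le\sqrt t$ through $I(t)\le\gamma(2t)\le\gamma(t)$. This gives $E_D(t)\le \theta^{-n/2}\gamma(t)$ with $\theta=\theta(\beta,D,n)$.

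\textbf{Main obstacle.} The key difficulty is Step 1. The naive estimate uses $\phi\ge -n/4$ and produces a factor $e^{nt/2}$, which is not uniform in $t$ because $\gamma$ is constant for $t>1$. Replacing this with (\ref{1.3*}) applied to $w=ue^{\xi/2}$ removes that factor. Some care is needed to justify (\ref{1.3*}) on $\Omega$ for Lipschitz weights (by approximation in $W^2_0$). The remaining bookkeeping in Step 2 is making the choices of $R_k,t_k$ compatible with an arbitrary $D>2$.
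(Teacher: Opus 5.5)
Your Step 1 is sound and uses the same mechanism as the paper. The paper obtains the Davies--Gaffney inequality (\ref{2.15*}) from the weight $-\frac{(r-d(x,K))_+^2}{2(T-s)}$ and absorbs the potential through (\ref{1.3*}) applied to $ue^{\xi/2}$. Your formula for $\frac{d}{ds}\int u^2e^\xi$ is a cleaner form of (\ref{2.13*})--(\ref{2.14*}). Step 3 is also fine, provided Step 2 really delivers some $D'<D$.

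The gap is in Step 2: with the sequences you describe, the iteration cannot give an arbitrary $D'>2$. Since $\gamma(2t_{k+1})\ge\gamma(2t)\ge 2^{-n/2}\gamma(t)$, your sum is bounded by $C\gamma(t)e^{-R^2/(D't)}$ uniformly in $R^2/t$ only if every exponent satisfies $\frac{(R_k-R_{k+1})^2}{2(t_k-t_{k+1})}\ge\frac{R^2}{D't}$. Sum the resulting lower bounds on $R_k-R_{k+1}$, with $t_k=t\beta^{-k}$ and $b=\beta^{-1/2}$:
\[
\frac R2=\sum_{k\ge0}(R_k-R_{k+1})\ge R\sqrt{\frac2{D'}}\sum_{k\ge0}\sqrt{\frac{t_k-t_{k+1}}{t}}=R\sqrt{\frac2{D'}}\sqrt{\frac{1+b}{1-b}}.
\]
So $D'\ge 8\frac{1+b}{1-b}>8$ whenever $R_k\searrow R/2$, and the $k^{-2}$ profile of the decrements makes the constant far worse. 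This is exactly why the iteration of Grigor'yan and Wu, which the paper follows, only yields a fixed $D_0(\beta)$. The annulus summation then covers $D\ge 5D_0$, and the range $2<D<5D_0$ needs a separate argument that your proposal omits. This is not bookkeeping; it is the one delicate point.

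The missing idea is already contained in your Step 1. The time-shifted weight $\xi=\frac{d^2(x,K)}{2(s+s_0)}$ satisfies $\partial_s\xi+\frac12|\nabla\xi|^2=0$, so
\[
J(s):=\int_\Omega u^2(\cdot,s)\exp\Bigl(\frac{d^2(x,K)}{2(s+s_0)}\Bigr)dv
\]
is nonincreasing; this is the monotonicity the paper invokes in its second step. Suppose $2<D<D_1$ and $E_{D_1}\le C\gamma$ is already known. Take $s_0=\frac{(D-2)t}{2}$ and $\tau=\frac{(D-2)t}{D_1-2}<t$. Then
\[
E_D(t)=J(t)\le J(\tau)=E_{D_1}(\tau)\le C\gamma(\tau)\le C(n,D,D_1)\gamma(t).
\]

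Alternatively, the direct iteration can be rescued, but only with a substantively different tuning:
\begin{itemize}
\item a time ratio $\beta'$ that is large depending on $D$ (harmless, since $\beta$ does not enter the conclusion);
\item terminal radius $\epsilon R$ instead of $R/2$;
\item decrements proportional to $\beta'^{-k(1-\delta)/2}$, so the exponents grow like $\beta'^{k\delta}$ and still absorb the factors $\beta'^{kn/2}$.
\end{itemize}
This lowers $D'$ to any value above $2\frac{1+\beta'^{-1/2}}{1-\beta'^{-1/2}}$.

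A minor point: the property of $\gamma$ you need is $\gamma(s/\beta)\le\beta^{n/2}\gamma(s)$. The inequality you wrote is trivially true because $\gamma$ is nonincreasing.
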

\begin{proof}
	Given $r>0$ and $T>t>0$, set
	$$I_r(t)=\int_{\Omega\setminus B(K,r)}u^2(x,t)dv,$$
	where $B(K,r)$ is the $r$-neighborhood of $K$. To estimate $I_r(t)$, we consider
	$$\int_{\Omega}u^2(x,t)e^{\xi(x,t)}dv,$$
	where
	$$\xi(x,t)=-\frac{d^2(x)}{2(T-t)},\quad d(x)=
	\left\{\
	\begin{array}{lr}
		r-d(x,K) ,&x\in B(K,r),\\
		0 ,&\mbox{otherwise}.
	\end{array}
	\right.$$
	Direct computations yield
	\begin{align}
		\frac{d}{dt}\int_{\Omega}u^2(x,t)e^{\xi(x,t)}dv&=\int_{\Omega}u^2e^{\xi}\xi_tdv+\int_{\Omega}2uu_te^{\xi}dv\nonumber\\
		&\leq-\frac{1}{2}\int_{\Omega}u^2e^{\xi}\left| \nabla \xi \right|^2dv+2\int_{\Omega}u(\Delta u-\phi u)e^{\xi}dv.\label{2.13*}
	\end{align}
	In view of (\ref{1.3*}), 
	$$\int_{\Omega}\left(\left|\nabla(ue^{\frac{\xi}{2}})\right|^2+\phi u^2e^\xi\right)dv\geq0,$$
	therefore
	\begin{align}
		&2\int_{\Omega}u(\Delta u-\phi u)e^{\xi}dv\nonumber\\
		&\leq 2\int_{\Omega}ue^\xi\Delta udv+2\int_{\Omega}\left|\nabla(ue^{\frac{\xi}{2}})\right|^2dv\nonumber\\
		&=2\int_{\Omega}ue^\xi\Delta udv-2\int_{\Omega}ue^{\frac{\xi}{2}}\Delta(ue^{\frac{\xi}{2}})dv\nonumber\\
		&=2\int_{\Omega}ue^\xi\Delta udv-2\int_{\Omega}ue^{\frac{\xi}{2}}\left[u\left(\frac{1}{2}e^{\frac{\xi}{2}}\Delta\xi+\frac{1}{4}e^{\frac{\xi}{2}}\left| \nabla \xi \right|^2\right)+e^{\frac{\xi}{2}}\Delta u+2\left\langle\nabla u,\nabla e^{\frac{\xi}{2}} \right\rangle\right]dv\nonumber\\
		&=-\int_{\Omega}u^2e^\xi\Delta\xi dv-\frac{1}{2}\int_{\Omega}u^2e^\xi\left| \nabla \xi \right|^2dv-2\int_{\Omega}ue^{\xi}\left\langle\nabla u,\nabla \xi \right\rangle dv\nonumber\\
		&=\int_{\Omega}\left\langle\nabla(u^2e^\xi),\nabla\xi\right\rangle dv-\frac{1}{2}\int_{\Omega}u^2e^\xi\left| \nabla \xi \right|^2dv-2\int_{\Omega}ue^{\xi}\left\langle\nabla u,\nabla \xi \right\rangle dv\nonumber\\
		&=\frac{1}{2}\int_{\Omega}u^2e^{\xi}\left| \nabla \xi \right|^2dv.\label{2.14*}
	\end{align}
	Combining (\ref{2.13*}) and (\ref{2.14*}),
	\begin{equation*}
		\frac{d}{dt}\int_{\Omega}u^2(x,t)e^{\xi(x,t)}dv\leq0.
	\end{equation*}
	In other words, for $0<s<t<T$,
	$$\int_{\Omega}u^2(x,t)e^{\xi(x,t)}dv\leq\int_{\Omega}u^2(x,s)e^{\xi(x,s)}dv.$$
	As a consequence, for $0<\rho<r$,
	\begin{align*}
		I_r(t)&\leq\int_{\Omega}u^2(x,t)e^{\xi(x,t)}dv\\
		&\leq \int_{\Omega}u^2(x,s)e^{\xi(x,s)}dv\\
		&\leq \int_{\Omega\setminus B(K,\rho)}u^2(x,s)dv+\int_{B(K,\rho)}u^2(x,s)e^{-\frac{(r-\rho)^2}{2(T-s)}}dv\\
		&\leq I_\rho(s)+\exp\left(-\frac{(r-\rho)^2}{2(T-s)}\right)\int_{B(K,\rho)}u^2(x,s)dv.
	\end{align*}
	Setting $T\longrightarrow t$, we have
	\begin{equation}\label{2.15*}
		I_r(t)\leq I_\rho(s)+\exp\left(-\frac{(r-\rho)^2}{2(t-s)}\right)\int_{B(K,\rho)}u^2(x,s)dv.
	\end{equation}
	Once obtaining the inequality (\ref{2.15*}), we can iterate step by step as in \cite{AG}(see also \cite[section 4]{Wu} for details) to show that (\ref{2.12*}) is controlled by the following $I(t)$ without further requirements. Briefly, once (\ref{2.15*})) is established, the remainder of the proof is the same as the proof of Wu\cite[Lemma 4.2 and Proposition 4.1]{Wu}. Indeed, using
	\begin{equation*}
		\int_{B(K,\rho)}u^2(\cdot,s)\,dv \le I(s)
	\end{equation*}
	and \(I(s)\le\gamma(2s)\), we obtain the recursive inequality corresponding to (4.2) of \cite{Wu}(4.2). The function \(\gamma\) satisfies
	\begin{equation*}
		\gamma(2s)\le C(n)\left(\frac{t}{s}\right)^{n/2}\gamma(t),\quad 0<s\le t,
	\end{equation*}
	and hence Wu's choices
	\begin{equation*}
		r_k=\left(\frac12+\frac{1}{k+2}\right)r,\quad t_k=\frac{t}{\beta^k}
	\end{equation*}
	give that for $D_0=D_0(\beta)$,
	\begin{equation*}
		I_r(t)\le C(n,\beta)\gamma(t)\exp\left(-\frac{r^2}{D_0(\beta)t}\right).
	\end{equation*}
	The dyadic annulus argument in \cite[Proposition 4.1, step one]{Wu} then yields the desired estimate for \(D\ge 5D_0\). For \(2<D<5D_0\), step two therein of the same proof applies due to the nonnegativity of the quadratic form (\ref{1.3*}) implies the required monotonicity of
	\begin{equation*}
		\int_\Omega u^2(x,t)\exp\Bigl(\frac{d^2(x,K)}{2(t+s)}\Bigr)dv.
	\end{equation*}
	This completes the proof.
\end{proof}

Now the Gaussian upper bound follows by this proposition immediately.
\begin{theorem}\label{t2.3}
	For any Ricci shrinker $(M^n,g,f)$, $x,y\in M^n,t>0$ and $\varepsilon>0$, there exists a constant $\theta=\theta(n,\varepsilon)>0$ such that
	\begin{equation}\label{2.16*}
		H_\phi(x,y,t)\leq\frac{4}{\theta^{\frac{n}{2}}}\gamma(t)\exp\left(-\frac{d^2(x,y)}{(4+\varepsilon)t}\right),
	\end{equation}
	where $\gamma(t)$ is the same as in (\ref{2.6*}).
\end{theorem}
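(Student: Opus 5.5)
The estimate follows from Proposition \ref{p2.2*} by the standard Grigor'yan-type argument that turns weighted $L^2$ bounds into pointwise Gaussian bounds through the semigroup property. As in Theorem \ref{t2.1}, I work on a compact exhaustion $\Omega$ with Dirichlet kernel $H^\Omega_\phi$ and let $\Omega\nearrow M^n$ at the end. Since every constant is uniform in $\Omega$, it suffices to prove (\ref{2.16*}) for $H^\Omega_\phi$.

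\textbf{Step 1 (checking the hypothesis).} Fix $y$ and a small geodesic ball $K=\overline{B(y,\delta)}$. Take initial data $u_0\ge0$ supported in $K$, approximating $\delta_y$ with $\int u_0\,dv=1$, and set $u(\cdot,s)=\int H^\Omega_\phi(\cdot,z,s)u_0(z)dv(z)$. I need $I(s)=\int_\Omega u^2(\cdot,s)dv\le\gamma(2s)$. By the semigroup property, $\int_\Omega H^\Omega_\phi(x,z,s)H^\Omega_\phi(x,z',s)dv(x)=H^\Omega_\phi(z,z',2s)\le\gamma(2s)$, using Theorem \ref{t2.1}. Integrating against $u_0(z)u_0(z')$ gives $I(s)\le\gamma(2s)$. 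Proposition \ref{p2.2*} then yields, for any $D>2$,
\begin{equation*}
\int_\Omega u^2(x,t)e^{\frac{d^2(x,K)}{Dt}}dv\le\theta^{-n/2}\gamma(t).
\end{equation*}
Letting $u_0\to\delta_y$ and $\delta\to0$ gives $E_D^y(t):=\int_\Omega (H^\Omega_\phi(x,y,t))^2e^{d^2(x,y)/(Dt)}dv(x)\le\theta^{-n/2}\gamma(t)$.

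\textbf{Step 2 (pointwise bound).} Write $H^\Omega_\phi(x,y,t)=\int_\Omega H^\Omega_\phi(x,z,t/2)H^\Omega_\phi(z,y,t/2)dv(z)$. The triangle inequality gives $d^2(x,y)\le 2d^2(x,z)+2d^2(z,y)$, so
\begin{equation*}
e^{\frac{d^2(x,y)}{2D(t/2)}}\le e^{\frac{d^2(x,z)}{D(t/2)}}e^{\frac{d^2(z,y)}{D(t/2)}},
\end{equation*}
that is, $e^{d^2(x,y)/(Dt)}$ splits into the two half-time weights. Cauchy–Schwarz then gives
\begin{equation*}
H^\Omega_\phi(x,y,t)\le e^{-\frac{d^2(x,y)}{Dt}}\sqrt{E^x_D(t/2)E^y_D(t/2)}\le\theta^{-n/2}\gamma(t/2)e^{-\frac{d^2(x,y)}{Dt}}.
\end{equation*}
With $D=2+\varepsilon'$ this only produces the exponent $d^2/((2+\varepsilon')t)$ over a time $t/2$ split. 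Equivalently, the denominator comes out as $Dt$ with $D>2$ measured in half-time units, which after relabeling gives $4+\varepsilon$ with $\varepsilon=2\varepsilon'$. I will check this bookkeeping carefully: with weights $e^{d^2/(D s)}$ at $s=t/2$, the exponent is $\frac{2}{D t}\cdot\frac{d^2(x,y)}{2}=\frac{d^2(x,y)}{Dt}$. I would therefore choose $D=4+\varepsilon>2$ directly, which is allowed in Proposition \ref{p2.2*} and still gives $\theta=\theta(n,\varepsilon)$ (fixing $\beta$, say $\beta=2$).

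\textbf{Step 3 (constants).} Compare $\gamma(t/2)$ with $\gamma(t)$. For $t\le1$ we have $\gamma(t/2)\le 2^{n/2}\gamma(t)$. For $t>1$, the ratio is bounded by a factor $2^{n/2}e^{\frac n8}$-type constant, with the case $t/2\le1<t$ handled the same way. This factor is dimensional and is absorbed into $\theta$, giving the stated constant $4\theta^{-n/2}$ after renaming.

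\textbf{Main obstacle.} The delicate point is Step 1's passage from smooth initial data to the kernel itself, namely lower semicontinuity of the weighted integral in the limit $u_0\to\delta_y$ (handled by Fatou's lemma and continuity of $H^\Omega_\phi$), together with ensuring that Proposition \ref{p2.2*}'s hypothesis holds for all $s<T$ uniformly. Both seem routine given Theorem \ref{t2.1}.
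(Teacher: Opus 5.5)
Your route is the paper's: Dirichlet approximation, the semigroup split at $t/2$, $d^2(x,y)\le 2d^2(x,z)+2d^2(z,y)$, Cauchy--Schwarz, and Proposition \ref{p2.2*}. Your Steps 1 and 3 correctly supply details the paper leaves implicit. These are $I(s)\le\gamma(2s)$ via $\int_\Omega H^\Omega_\phi(x,z,s)H^\Omega_\phi(x,z',s)\,dv(x)=H^\Omega_\phi(z,z',2s)$ and Theorem \ref{t2.1}, the Fatou passage to the kernel, and $\gamma(t/2)\le 2^{n/2}\gamma(t)$.

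However, the Cauchy--Schwarz display in Step 2 is off by a factor of $2$. You attach the full half-time weights $w_1=e^{d^2(x,z)/(D(t/2))}$ and $w_2=e^{d^2(z,y)/(D(t/2))}$ to the two kernels. Cauchy--Schwarz then produces $\int_\Omega H^\Omega_\phi(x,z,t/2)^2w_1^2\,dv(z)$, and $w_1^2=e^{d^2(x,z)/((D/2)(t/2))}$. So what you actually get is
\begin{equation*}
H^\Omega_\phi(x,y,t)\le e^{-\frac{d^2(x,y)}{Dt}}\sqrt{E^x_{D/2}(t/2)\,E^y_{D/2}(t/2)},
\end{equation*}
not the same bound with $E_D$. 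The version you wrote cannot hold for all $D>2$. Combined with Proposition \ref{p2.2*} at $D=2+\varepsilon'$, it would give decay $e^{-d^2(x,y)/((2+\varepsilon')t)}$. That is violated on the Gaussian shrinker, where $H_\phi\approx(4\pi t)^{-n/2}e^{-|x-y|^2/(4t)}$ for fixed $x\neq y$ as $t\to0$. This is exactly the tension you ran into in the paragraph after the display. Your bookkeeping check there verifies the exponent but not the $L^2$ norms. So switching to $D=4+\varepsilon$ and quoting the bound for $E_{4+\varepsilon}$ does not repair the step.

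The fix is immediate, and there are two ways to make it. You can keep your split and apply Proposition \ref{p2.2*} with parameter $D/2=2+\varepsilon/2>2$. Alternatively, as the paper does, attach only $e^{d^2(x,z)/(Dt)}$ and $e^{d^2(z,y)/(Dt)}$ to the two factors and take $D=2+\varepsilon/2$. Then the Cauchy--Schwarz factors really are $E_D(x,t/2)$ and $E_D(y,t/2)$, while the Gaussian factor becomes $e^{-d^2(x,y)/(2Dt)}$. Either way $\theta=\theta(n,\varepsilon)$ and (\ref{2.16*}) follows. Your final exponent $d^2/((4+\varepsilon)t)$ is correct, but only once the lost factor of $2$ in the norms is accounted for.
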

\begin{proof}
	By semigroup property, 
	$$H_\phi^\Omega(x,y,t)=\int_\Omega H_\phi^\Omega\left(x,z,\frac{t}{2}\right)H_\phi^\Omega\left(z,y,\frac{t}{2}\right)dv(z),$$
	then by the triangle inequality $d^2(x,y)\leq 2\left(d^2(x,z)+d^2(z,y)\right)$, for all $\Omega$ large enough,
	\begin{align*}
		H_\phi^\Omega(x,y,t)&\leq \int_\Omega H_\phi^\Omega\left(x,z,\frac{t}{2}\right)e^{\frac{d^2(x,z)}{Dt}}H_\phi^\Omega\left(z,y,\frac{t}{2}\right)e^{\frac{d^2(y,z)}{Dt}}e^{-\frac{d^2(x,y)}{2Dt}}dv(z)\\
		&\leq e^{-\frac{d^2(x,y)}{2Dt}}\left[ \int_\Omega\left( H_\phi^\Omega\left(x,z,\frac{t}{2}\right)e^{\frac{d^2(x,z)}{Dt}}\right)^2 \right]^{\frac{1}{2}}\left[\int_\Omega  \left(H_\phi^\Omega\left(z,y,\frac{t}{2}\right)e^{\frac{d^2(z,y)}{Dt}}\right)^2 \right]^{\frac{1}{2}}\\
		&\leq E_D^{\frac{1}{2}}\left(x,\frac{t}{2}\right)E_D^{\frac{1}{2}}\left(y,\frac{t}{2}\right)e^{-\frac{d^2(x,y)}{2Dt}}.
	\end{align*}
	Finally, combining Proposition \ref{p2.2*} with $\beta=2, D=2+\varepsilon/2$ and an approximation argument completes the proof.
\end{proof}

\section{Upper bounds: Part A}\label{sec-upper}
In this section, we are going to establish the potential estimates of heat kernel upper bounds. To this end, first we need the following fundamental differential equation.

\begin{lemma}\label{l2.1}
	Let $P(x,y,t)=H_{\phi}(x,y,t)\exp\left(\frac{f(x)}{2}\tanh\frac{t}{2}\right)$, then
	\begin{align}
		\left(\partial_t-\Delta_x\right) P\left( x,y,t\right)+\tanh \frac{t}{2} \left<\nabla_x P,\nabla_x f(x) \right>=\frac{nP}{4}\left( 1-\tanh\frac{t}{2}\right)-\frac{1}{4}\left( \tanh \frac{t}{2}-1\right) ^{2}PR(x).\label{3.1}
	\end{align}
\end{lemma}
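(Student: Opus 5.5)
The identity is a direct computation, which I would organise as follows. Write $s=\tanh\frac t2$ and $w(x,t)=\frac{f(x)}{2}\,s$, so that $P=H_\phi e^{w}$. I will use only two inputs. The first is that $H_\phi(\cdot,y,t)$ solves the Schrödinger heat equation
$$\partial_t H_\phi=(\Delta-\phi)H_\phi,\qquad \phi=\tfrac14(f+R-n).$$
The second is the pair of shrinker identities (\ref{1.1}), namely $\Delta f=\frac n2-R$ and $|\nabla f|^2=f-R$.

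First I compute each piece of the operator applied to $P$.

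\emph{Time derivative.} Since $\partial_t s=\frac12(1-s^2)$, we get
$$\partial_t P=e^{w}\partial_tH_\phi+\tfrac{f}{4}(1-s^2)P.$$

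\emph{Laplacian.} Expanding,
$$\Delta P=e^{w}\Delta H_\phi+2e^{w}\langle\nabla H_\phi,\nabla w\rangle+P(\Delta w+|\nabla w|^2).$$
To express the cross term through $P$, I use $e^{w}\nabla H_\phi=\nabla P-P\nabla w$. This gives
$$\Delta P=e^{w}\Delta H_\phi+2\langle\nabla P,\nabla w\rangle+P(\Delta w-|\nabla w|^2).$$

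\emph{Drift term.} Because $\nabla w=\frac s2\nabla f$, we have $2\langle\nabla P,\nabla w\rangle=s\langle\nabla P,\nabla f\rangle$. This is exactly the drift term on the left side of (\ref{3.1}).

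Combining these with the heat equation for $H_\phi$ yields
$$(\partial_t-\Delta)P+s\langle\nabla P,\nabla f\rangle=P\Big[-\phi+\tfrac f4(1-s^2)-\tfrac s2\Delta f+\tfrac{s^2}{4}|\nabla f|^2\Big].$$

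The last step is to simplify the bracket using the shrinker identities. Substituting $\phi=\frac14(f+R-n)$, $\Delta f=\frac n2-R$ and $|\nabla f|^2=f-R$, the bracket becomes
$$-\tfrac f4-\tfrac R4+\tfrac n4+\tfrac f4-\tfrac{s^2f}{4}-\tfrac{sn}{4}+\tfrac{sR}{2}+\tfrac{s^2f}{4}-\tfrac{s^2R}{4}.$$
All $f$-terms cancel, and what remains is
$$\tfrac n4(1-s)-\tfrac R4(1-s)^2,$$
which is the right-hand side of (\ref{3.1}).

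The only point needing care is this cancellation of the $f$-terms. It is precisely where the choice of exponent $\frac f2\tanh\frac t2$ enters. The derivative $\frac12\operatorname{sech}^2\frac t2$ of $\tanh\frac t2$ has to absorb the quadratic term $\frac{s^2}{4}|\nabla f|^2$ together with the $\frac f4$ coming from $\phi$. The identity $|\nabla f|^2=f-R$ is what converts $|\nabla f|^2$ into $f$ so that this cancellation can happen.

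No analytic issues arise. $H_\phi$ is smooth and positive for $t>0$, so the computation is pointwise.
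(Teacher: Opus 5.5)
Your computation is correct and is essentially the paper's proof: expand $\partial_t P$ and $\Delta_x P$, turn the cross term into the drift term via $e^{w}\nabla H_\phi=\nabla P-P\nabla w$, then use $\partial_tH_\phi=(\Delta-\phi)H_\phi$, $\Delta f=\frac n2-R$, $|\nabla f|^2=f-R$ and $\cosh^{-2}\frac t2=1-\tanh^2\frac t2$ to cancel the $f$-terms. Writing $w=\frac f2\tanh\frac t2$ only organizes the same algebra more tidily, and the bracket does reduce to $\frac n4(1-s)-\frac R4(1-s)^2$ as you claim.
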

\begin{proof}
	Direct computations give
	\begin{align*}
		\partial_tP&=\left( \partial_t H_{\phi}\right) \exp \left( \frac{f(x)}{2}\tanh \frac{t}{2}\right) +H_{\phi}\exp \left( \frac{f(x)}{2}\tanh \frac{t}{2}\right)\frac{f(x)}{4}\left( \cosh ^{-2}\frac{t}{2}\right)
	\end{align*}
	and
	\begin{align*}
		&\Delta_xP\\
		&=\left(\Delta_x H_{\phi}\right) \exp \left( \frac{f(x)}{2}\tanh\frac{t}{2}\right) +2\left<\nabla_x H_{\phi},\nabla_x \exp \left(\frac{f(x)}{2} \tanh\frac{t}{2}\right) \right>+H_{\phi} \Delta _{x}\exp \left(\frac{f(x)}{2} \tanh\frac{t}{2}\right) \\
		&=\left(\Delta_xH_{\phi}\right) \exp \left( \frac{f(x)}{2}\tanh\frac{t}{2}\right) +2\left<\nabla_x H_{\phi},\exp \left(\frac{f(x)}{2} \tanh\frac{t}{2}\right) \dfrac{\tanh \frac{t}{2}}{2}\nabla_x f(x)\right>\\
		&\quad+H_{\phi}\left[\exp \left(\frac{f(x)}{2} \tanh\frac{t}{2}\right)\dfrac{\tanh \frac{t}{2}}{2}\Delta _{x}f(x)+\exp \left( \frac{f(x)}{2}\tanh \frac{t}{2}\right) \frac{\tanh ^{2}\frac{t}{2}}{4}\left|\nabla_xf(x)\right|^2\right].
	\end{align*}
	Note that $\left(\partial_t-\Delta_x\right)H_{\phi}=-\frac{1}{4}\left(f(x)+R(x)-n\right)H_{\phi}$,
	therefore
	\begin{align*}
		&\left(\partial_t-\Delta_x\right) P\\
		&=-\frac{P}{4}\left(f(x)+R(x)-n\right)+\frac{f(x)}{4}\left( \cosh ^{-2}\frac{t}{2}\right)P\\
		&\quad-\tanh \frac{t}{2}\exp \left(\frac{f(x)}{2} \tanh\frac{t}{2}\right)\left<\nabla_x H_{\phi},\nabla_x f(x)\right>-P\left[\frac{\tanh \frac{t}{2}}{2}\Delta _{x}f\left( x\right)+\frac{\tanh ^{2}\frac{t}{2}}{4}\left|\nabla_xf(x)\right|^2\right].
	\end{align*}
	On the other hand,
	\begin{align*}
		\left\langle\nabla_x P,\nabla_x f(x)\right\rangle&=\left\langle\nabla_x \left(H_{\phi}\exp\left(\frac{f(x)}{2}\tanh\frac{t}{2}\right)\right),\nabla_x f(x)\right\rangle\\
		&=\exp\left(\frac{f(x)}{2}\tanh\frac{t}{2}\right)\left\langle \nabla_x H_\phi, \nabla_x f(x)\right\rangle+\frac{P\tanh \frac{t}{2}}{2}\left|\nabla_x f(x)\right|^2.
	\end{align*}
	Summarily,
	\begin{align*}
		&\left(\partial_t-\Delta_x\right) P\\
		&=-\frac{P}{4}\left(f(x)+R(x)-n\right)+\frac{f(x)}{4}\left( \cosh ^{-2}\frac{t}{2}\right)P-\tanh \frac{t}{2}\left<\nabla_x P,\nabla_x f(x)\right>+\frac{P\tanh^2 \frac{t}{2}}{2}\left|\nabla_x f(x)\right|^2\\
		&\quad-P\left[\frac{\tanh\frac{t}{2}}{2}\Delta _{x}f\left( x\right)+\frac{\tanh ^{2}\frac{t}{2}}{4}\left|\nabla_xf(x)\right|^2\right].
	\end{align*}
	In light of (\ref{1.1}) and the identity $\frac{1}{\cosh^2 \theta}=1-\tanh^2 \theta$,  we have 
	\begin{align*}
		\left(\partial_t-\Delta_x\right) P\left( x,y,t\right)+\tanh \frac{t}{2} \left<\nabla_x P,\nabla_x f(x) \right>=\dfrac{nP}{4}\left( 1-\tanh\frac{t}{2}\right)-\frac{1}{4}\left(1-\tanh \frac{t}{2}\right) ^{2}PR(x).
	\end{align*}
\end{proof}

By similar proof of  Theorem \ref{t2.1}, we have
\begin{proposition}\label{p7.1}
	For any Ricci shrinker $(M^n,g,f)$ and $x,y\in M^n,t\in\left(0,1\right]$,
	\begin{equation*}
		H_\phi(x,y,t)e^{\frac{f(x)}{2}\tanh\frac{t}{2}}\leq 
		(4\pi t)^{-\frac{n}{2}}e^{-\boldsymbol{\mu}+\frac{n}{2}t}.
	\end{equation*}
\end{proposition}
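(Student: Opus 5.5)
The plan is to repeat the Davies-type argument of Theorem \ref{t2.1}, but applied to the weighted solution $v=u\,e^{\frac{f}{2}\tanh\frac t2}$ instead of $u$. Lemma \ref{l2.1} supplies the equation that $v$ satisfies. As there, I work on a compact domain $\Omega$ with smooth boundary and pass to an exhaustion at the end. Fix a nonnegative $u_0\in C_0^\infty(\Omega)$ and set
\[
u(x,t)=\int_\Omega H^\Omega_\phi(x,y,t)u_0(y)\,dv(y),\qquad v=u\,e^{\frac{f}{2}a(t)},\qquad a(t)=\tanh\frac t2 .
\]
By (\ref{3.1}), which is linear in $P$ and hence also holds for $v$, we have
\[
\partial_t v=\Delta v-a\langle\nabla f,\nabla v\rangle+c\,v,\qquad c=\frac n4(1-a)-\frac14(1-a)^2R .
\]
Since $v=0$ on $\partial\Omega$ and $v(\cdot,0)=u_0$, we get $\|v(0)\|_1=\|u_0\|_1$.

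With $p(t)=\frac{T}{T-t}$, I compute $p^2\partial_t\ln\|v\|_{p(t)}$ exactly as before. The only new term is the drift, which I handle by integrating by parts:
\[
-a\int v^{p-1}\langle\nabla f,\nabla v\rangle=\frac ap\int v^p\Delta f=\frac ap\int v^p\Bigl(\frac n2-R\Bigr),
\]
using $\Delta f=\frac n2-R$ from (\ref{1.1}). With $w=v^{p/2}/\|v^{p/2}\|_2$ this gives
\[
p^2\partial_t\ln\|v\|_p=p'\Bigl[\int w^2\ln w^2-\frac{4(p-1)}{p'}\int|\nabla w|^2\Bigr]+p^2\int\Bigl(\frac n4(1-a)+\frac{an}{2p}\Bigr)w^2-p^2\int\Bigl(\frac{(1-a)^2}{4}+\frac ap\Bigr)Rw^2 .
\]
I then insert the Li--Wang log-Sobolev inequality \cite[Theorem 1]{LW} with $\tau=\frac{p-1}{p'}$. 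This cancels the gradient term and leaves the $R$-coefficient
\[
(p-1)-p^2\Bigl(\frac{(1-a)^2}{4}+\frac ap\Bigr).
\]

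The key step, which I expect to be the main obstacle, is showing this coefficient is nonpositive, so that $R\ge0$ lets me drop the term. Viewed as a function of $a$, the quantity $\frac{p^2(1-a)^2}{4}+pa$ is minimized at $1-a=\frac2p$, where its value is exactly $p-1$. So the coefficient is $\le0$ for every $a$ and $p\ge1$; this equality case shows the weight $\tanh\frac t2$ is calibrated precisely for this. For the constant terms I use $\frac n4(1-a)+\frac{an}{2p}\le\frac n2$. This yields
\[
\partial_t\ln\|v\|_{p(t)}\le-\frac1T\Bigl[\boldsymbol\mu+n+\frac n2\ln\frac{4\pi t(T-t)}{T}\Bigr]+\frac n2 .
\]

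Integrating from $0$ to $T$ and repeating the computation of Theorem \ref{t2.1}, with $\frac n2$ in place of $\frac n4$, gives
\[
\|v(T)\|_\infty\le(4\pi T)^{-\frac n2}e^{-\boldsymbol\mu+\frac n2T}\|u_0\|_1 .
\]
Since $u_0\ge0$ is arbitrary, this says $H^\Omega_\phi(x,y,T)\,e^{\frac{f(x)}2\tanh\frac T2}\le(4\pi T)^{-\frac n2}e^{-\boldsymbol\mu+\frac n2T}$. Letting $\Omega$ exhaust $M^n$ completes the proof; the restriction $t\in(0,1]$ is only needed for the stated form to be meaningful, not for the argument.
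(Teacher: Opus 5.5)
Your proposal is correct and is essentially the paper's proof. Both use the Davies-type $L^1\to L^\infty$ argument for $u\,e^{\frac f2\tanh\frac t2}$ on compact domains: the drift is integrated by parts into $\frac ap\int v^p\bigl(\frac n2-R\bigr)$, the Li--Wang log-Sobolev inequality is applied at $\tau=\frac{p-1}{p'}$, the constant terms are bounded by $\frac n2$, and your minimization of $\frac{p^2(1-a)^2}{4}+pa$ over $a$ is exactly the paper's completed square $(p-1)-\frac{p^2(1-a)^2}{4}-pa=-\bigl(\frac{p(1-a)}{2}-1\bigr)^2\le 0$.
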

\begin{proof}
	As before, we only need to consider the corresponding heat kernel $H^\Omega_{\phi}$ on compact domains $\Omega$ with smooth boundaries and approximate $H_\phi$ by $H^\Omega_\phi$. Following the same argument as in Lemma \ref{l2.1}, let $P(x,y,t)=H^\Omega_{\phi}(x,y,t)\exp\left(\frac{f(x)}{2}\tanh\frac{t}{2}\right)$, then
	\begin{align}
		\left(\partial_t-\Delta_x\right) P\left( x,y,t\right)+\tanh \frac{t}{2} \left<\nabla_x P,\nabla_x f(x) \right>=\frac{nP}{4}\left( 1-\tanh  \frac{t}{2}\right)-\frac{1}{4}\left( \tanh \frac{t}{2}-1\right) ^{2}PR(x).\label{7.1}
	\end{align}
	Let $P(x,z,t)=H^\Omega_{\phi}(x,z,t)\exp\left(\frac{f(x)}{2}\tanh\frac{t}{2}\right)$ and $h(z)$ be  any non-negative smooth function with compact support in $\Omega$. Then for any $0<T\leq1$, $t\in\left[0,T\right]$ and by (\ref{7.1}), 
	$$u(x,t)=\int_{\Omega}h(z)P(x,z,t)dv(z)\in C^\infty(\Omega)\bigcap W^2_0(\Omega)$$
	satisfies
	\begin{align}\label{7.2}
		\left(\partial_t-\Delta_x\right)u+\tanh\frac{t}{2}\left<\nabla_x u,\nabla_x f(x)\right>=\frac{nu}{4}\left(1-\tanh\frac{t}{2}\right)-\frac{1}{4}\left( \tanh \frac{t}{2}-1\right)^{2}uR(x).
	\end{align}
	Set 
	$$\left\|u\right\|_{p(t)}=\left(\int_{\Omega}\left|u\right|^{p(t)}dv(x)\right)^{\frac{1}{p(t)}},$$
	where $p(t)=\frac{T}{T-t}$, $t\in\left[0,T\right]$, which implies $p(0)=1$ and $p(T)=\infty$.
	By (\ref{7.2}), we compute
	\begin{align*}
		\partial_t\left\|u\right\|_{p(t)}
		&=-\frac{p'}{p^2}\left\|u\right\|_{p(t)}\ln\left(\left\|u\right\|_{p(t)}^{p(t)}\right)+\frac{\left\|u\right\|_{p(t)}^{1-p(t)}}{p(t)}\left[p'\int_{\Omega}u^p\ln udv(x)+p\int_{\Omega}u^{p-1}\partial_t udv(x)\right]\\
		&\leq-\frac{p'}{p^2}\left\|u\right\|_{p(t)}\ln\left(\left\|u\right\|_{p(t)}^{p(t)}\right)+\frac{\left\|u\right\|_{p(t)}^{1-p(t)}}{p(t)}p'\int_{\Omega}u^p\ln udv(x)\\
		&\quad+\left\|u\right\|_{p(t)}^{1-p(t)}\int_{\Omega}u^{p-1}\left(\Delta_xu+\frac{nu}{4}\left(1-\tanh\frac{t}{2}\right)\right)dv(x)\\
		&\quad-\left\|u\right\|_{p(t)}^{1-p(t)}\int_{\Omega}u^{p-1}\left(\tanh\frac{t}{2}\left<\nabla_x u,\nabla_x f(x)\right>+\frac{\left( \tanh \frac{t}{2}-1\right) ^{2}uR(x)}{4}\right)dv(x).
	\end{align*}
	Multiplying by $p^2\left\|u\right\|_{p(t)}^{p(t)}$ on both sides of the above equality and integrating by parts gives
	\begin{align*}
		&p^2\left\|u\right\|_{p(t)}^{p(t)}\partial_t\left\|u\right\|_{p(t)}\\
		&\leq-p'\left\|u\right\|_{p(t)}^{p(t)+1}\ln\left(\left\|u\right\|_{p(t)}^{p(t)}\right)+pp'\left\|u\right\|_{p(t)}\int_{\Omega}u^p\ln udv(x)\\
		&\quad-p^2(p-1)\left\|u\right\|_{p(t)}\int_{\Omega}u^{p-2}\left|\nabla_x u\right|^2dv(x)+p\left\|u\right\|_{p(t)}\tanh \frac{t}{2}\int_{\Omega}u^p\Delta_x f(x)dv(x)\\
		&\quad+p^2\left\|u\right\|_{p(t)}\int_{\Omega}\left(\frac{n}{4}\left(1-\tanh\frac{t}{2}\right)-\frac{\left( \tanh \frac{t}{2}-1\right)^{2}R(x)}{4}\right)u^pdv(x).
	\end{align*}
	Since $\Delta f=\frac{n}{2}-R\leq\frac{n}{2}$, $0<\tanh\frac{t}{2}<1$ and $p\geq1$,
	\begin{align*}
		&p^2\left\|u\right\|_{p(t)}^{p(t)}\partial_t\left\|u\right\|_{p(t)}\\
		&\leq-p'\left\|u\right\|_{p(t)}^{p(t)+1}\ln\left(\left\|u\right\|_{p(t)}^{p(t)}\right)+pp'\left\|u\right\|_{p(t)}\int_{\Omega}u^p\ln udv(x)\\
		&\quad-p^2(p-1)\left\|u\right\|_{p(t)}\int_{\Omega}u^{p-2}\left|\nabla_x u\right|^2dv(x)+p\left\|u\right\|_{p(t)}\tanh \frac{t}{2}\int_{\Omega}u^p\left(\frac{n}{2}-R(x)\right)dv(x)\\
		&\quad+p^2\left\|u\right\|_{p(t)}\int_{\Omega}\left(\frac{n}{4}\left(1-\tanh\frac{t}{2}\right)-\frac{\left( \tanh \frac{t}{2}-1\right)^{2}R(x)}{4}\right)u^pdv(x)\\
		&\leq-p'\left\|u\right\|_{p(t)}^{p(t)+1}\ln\left(\left\|u\right\|_{p(t)}^{p(t)}\right)+pp'\left\|u\right\|_{p(t)}\int_{\Omega}u^p\ln udv(x)\\
		&\quad-p^2(p-1)\left\|u\right\|_{p(t)}\int_{\Omega}u^{p-2}\left|\nabla_x u\right|^2dv(x)\\
		&\quad+p^2\left\|u\right\|_{p(t)}\int_{\Omega}\left(\frac{n}{4}\left(1+\tanh\frac{t}{2}\right)-\left( \frac{\left( \tanh \frac{t}{2}-1\right)^{2}}{4}+\frac{\tanh\frac{t}{2}}{p}\right)R(x)\right)u^pdv.
	\end{align*}
	
	Dividing by $\left\|u\right\|_{p(t)}^{p(t)+1}$ yields
	\begin{align*}
		&p^2\partial_t\ln \left\|u\right\|_{p(t)}\\
		&\leq-p'\ln \left\|u\right\|_{p(t)}^{p(t)}+\frac{pp'}{\left\|u\right\|_{p(t)}^{p(t)}}\int_{\Omega}u^p\ln udv-\frac{4(p-1)}{\left\|u\right\|_{p(t)}^{p(t)}}\int_{\Omega}\left|\nabla_xu^{\frac{p}{2}}\right|^2dv(x)\\
		&\quad+\frac{p^2}{\left\|u\right\|_{p(t)}^{p(t)}}\int_{\Omega}\left(\frac{n}{4}\left(1+\tanh\frac{t}{2}\right)-\left(\frac{\left( \tanh \frac{t}{2}-1\right)^{2}}{4}+\frac{\tanh\frac{t}{2}}{p}\right)R(x)\right)u^pdv(x).
	\end{align*}
	Let 
	$$w=\frac{u^{\frac{p}{2}}}{\left\|u^{\frac{p}{2}}\right\|_2},$$
	then $\left\|w\right\|_2=1$.
	Consequently,
	\begin{align}\label{7.3}
		p^2\partial_t\ln \left\|u\right\|_{p(t)}&\leq p'\left[\int_{\Omega}w^2\ln w^2dv(x)-\frac{4(p-1)}{p'}\int_{\Omega}\left|\nabla_x w\right|^2dv(x)\right]\nonumber\\
		&\quad+p'\left[\frac{p^2}{4p'}\int_{\Omega}\left(n\left(1+\tanh\frac{t}{2}\right)-\left(\left( \tanh \frac{t}{2}-1\right)^{2}+\frac{4\tanh\frac{t}{2}}{p}\right)R(x)\right)w^2dv(x)\right].
	\end{align}
	
	Now choosing $\tau=\frac{(p-1)}{p'}$ in the optimal Log-Sobolev inequality on Ricci shrinkers proved by \cite[Theorem 1]{LW} gives
	$$\int_{\Omega}w^2\ln w^2dv(x)\leq\frac{p-1}{p'}\int_{\Omega}\left(4\left|\nabla_x w\right|^2+Rw^2\right)dv(x)-\left[\boldsymbol{\mu}+n+\frac{n}{2}\ln(4\pi\tau)\right].$$
	Plugging this inequality into (\ref{7.3}) and noticing $p\geq1$, we arrive at
	\begin{align*}
		&\partial_t\ln \left\|u\right\|_{p(t)}\\
		&\leq \frac{p'}{p^2}\left( \frac{p-1-\frac{\left( \tanh \frac{t}{2}-1\right)^{2}p^2}{4}-p\tanh\frac{t}{2}}{p'}\int_{\Omega}R(x)w^2dv-\left[\boldsymbol{\mu}+n+\frac{n}{2}\ln(4\pi\tau)\right]\right)+\frac{n}{4}\left(1+\tanh\frac{t}{2}\right)\\
		&=\frac{p'}{p^2}\left( \frac{-\left(\frac{p\left(1-\tanh\frac{t}{2}\right)}{2}-1 \right)^2}{p'}\int_{\Omega}R(x)w^2dv(x)-\left[\boldsymbol{\mu}+n+\frac{n}{2}\ln(4\pi\tau)\right]\right)+\frac{n}{4}\left(1+\tanh\frac{t}{2}\right)\\
		&\leq -\frac{p'}{p^2}\left[\boldsymbol{\mu}+n+\frac{n}{2}\ln(4\pi\tau)\right]+\frac{n}{4}\left(1+\tanh\frac{t}{2}\right)\\
		&=-\frac{1}{T}\left[\boldsymbol{\mu}+n+\frac{n}{2}\ln\frac{4\pi t(T-t)}{T}\right]+\frac{n}{4}\left(1+\tanh\frac{t}{2}\right)\\
		&\leq-\frac{1}{T}\left[\boldsymbol{\mu}+n+\frac{n}{2}\ln\frac{4\pi t(T-t)}{T}\right]+\frac{n}{2}.
	\end{align*}
	Integrating this inequality from $0$ to $T\in\left(0,1\right]$ implies
	\begin{align*}
		\left\|u(x,T)\right\|_\infty \leq\frac{\exp\left(-\boldsymbol{\mu}+\frac{nT}{2}\right)}{(4\pi T)^\frac{n}{2}}\left\|u(x,0)\right\|_1.
	\end{align*}
	Since $h$ is an arbitrary non-negative function with compact support in $\Omega$, $\Omega$ is arbitrary and 
	$$u(x,T)=\int_{\Omega}h(z)P(x,z,T)dv(z),$$
	hence we conclude
	\begin{equation*}
		H_\phi^{\Omega}(x,y,T)\exp\left(\frac{f(x)}{2}\tanh\frac{T}{2}\right)\leq\frac{e^{-\boldsymbol{\mu}+\frac{nT}{2}}}{(4\pi T)^\frac{n}{2}}.
	\end{equation*}
\end{proof}

It's natural to extend the above short time estimate to long time via the following maximum principle.
\begin{lemma}\label{l7.2}
	For any Ricci shrinker $(M^n,g,f)$, $\delta>0$ and $t\in[t_0,t_1]\subset\left(0,+\infty\right)$, let $q(x,t)\leq C e^{(1-\varepsilon)f(x)\tanh\delta t}$ satisfy 
	\begin{equation*}
		Lq=\partial_tq-\Delta q+\tanh\delta t\left\langle\nabla q, \nabla f\right\rangle\leq0,
	\end{equation*}
	where $C>0$ and $0<\varepsilon\ll1$ are some uniform constants. If $q(x,t_0)\leq C_1$ on $M^n$, then $q(x,t)\leq C_1$ for all $x\in M^n$ and $t\in [t_0,t_1]$.
\end{lemma}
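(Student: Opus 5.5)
We argue by a standard barrier (Phragmén–Lindelöf type) maximum principle on the noncompact manifold, using $f$ itself to build the barrier, since $f$ grows quadratically and is controlled by the Ricci shrinker identities (\ref{1.1}). Set $a(t)=\tanh\delta t\in(0,1)$ on $[t_0,t_1]$. For a parameter $\lambda>0$ we consider the comparison function
\begin{align*}
	\Psi(x,t)=\exp\big(\alpha(t) f(x)\big),
\end{align*}
with $\alpha(t)$ a positive function chosen so that $(1-\varepsilon)a(t)<\alpha(t)<a(t)$ on the short interval under consideration. Then $q-C_1-\lambda\Psi$ is negative outside a compact set: the growth hypothesis gives $q\le Ce^{(1-\varepsilon)f a}$, which is eventually dominated by $\lambda e^{\alpha f}$ because $f\to\infty$ at infinity (using $f\sim d^2/4$ from \cite{HM}).

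The key computation is $L\Psi$. Using $\nabla\Psi=\alpha\Psi\nabla f$, $\Delta\Psi=\Psi(\alpha\Delta f+\alpha^2|\nabla f|^2)$, $\Delta f=\frac n2-R$ and $|\nabla f|^2=f-R\le f$, we get
\begin{align*}
	L\Psi=\Psi\Big(\alpha' f-\alpha\big(\tfrac n2-R\big)-\alpha^2|\nabla f|^2+a\alpha|\nabla f|^2\Big)\ge\Psi\Big(\alpha' f-\tfrac n2\alpha+\alpha(a-\alpha)|\nabla f|^2\Big),
\end{align*}
using $R\ge0$. Since $\alpha<a$, the last term is nonnegative, so it suffices to have $\alpha' f-\frac n2\alpha>0$ on the region where the maximum could occur. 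To arrange this I would replace $\Psi$ by $e^{Kt}\Psi+$ handle the constant: take $\Psi=\exp(\alpha(t)f+Kt)$ with $\alpha$ increasing slowly (e.g. $\alpha'\ge0$) and $K\ge n/2$, giving $L\Psi\ge \Psi(K-\frac n2\alpha)>0$. Hence $w=q-C_1-\lambda\Psi$ satisfies $Lw<0$.

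Now suppose $w>0$ somewhere in $M^n\times[t_0,t_1']$. Since $w<0$ outside a compact set and $w(\cdot,t_0)\le-\lambda\Psi<0$, there is a first time and a point where $w$ attains a zero interior maximum; there $\partial_t w\ge0$, $\nabla w=0$, $\Delta w\le0$, so $Lw\ge0$, contradiction. Letting $\lambda\to0$ gives $q\le C_1$ on $[t_0,t_1']$.

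The main obstacle is the compatibility requirement $(1-\varepsilon)a(t)<\alpha(t)<a(t)$ with $\alpha$ nondecreasing over the whole interval: since $a$ is increasing but $a(t_1)$ may exceed $a(t_0)/(1-\varepsilon)$, a single $\alpha$ may not fit. I would resolve this by subdividing $[t_0,t_1]$ into finitely many subintervals $[s_k,s_{k+1}]$ with $a(s_{k+1})(1-\varepsilon)<a(s_k)$ (possible since $a$ is continuous and bounded below by $a(t_0)>0$, so the number of steps is finite), choosing on each the constant $\alpha_k\in\big((1-\varepsilon)a(s_{k+1}),a(s_k)\big)$, running the argument above, and iterating the conclusion $q(\cdot,s_{k+1})\le C_1$ as the initial bound for the next piece.
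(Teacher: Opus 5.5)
Your argument is correct, but the obstacle you identify is not real, and removing it gives the paper's proof. The sign condition you need is only $\alpha(a-\alpha)\ge0$, i.e.\ $\alpha\le a$, not $\alpha<a$. Taking $\alpha(t)=a(t)=\tanh\delta t$ has two effects. The $|\nabla f|^2$ terms cancel identically. The remaining term $\alpha' f=\delta\cosh^{-2}(\delta t)\,f$ is nonnegative because $f\ge R\ge0$.

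This is exactly the paper's barrier $\varphi=e^{at+f\tanh\delta t}$ with $a\ge\frac n2$. It gives $L\varphi\ge\varphi\,(a-\frac n2\tanh\delta t)\ge0$ on all of $[t_0,t_1]$ at once. Domination of $Ce^{(1-\varepsilon)f\tanh\delta t}$ at infinity comes from the gap $\varepsilon f\tanh\delta t\ge\varepsilon f\tanh\delta t_0$. The paper gets strictness by comparing $e^{-At}(q-C_1)$ with $\epsilon\varphi$, rather than through your $e^{Kt}$ factor.

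Your subdivision into pieces with constant exponents $\alpha_k\in\bigl((1-\varepsilon)a(s_{k+1}),a(s_k)\bigr)$ is nonetheless valid. On each piece:
\begin{itemize}
\item $\alpha_k<a(s_k)\le a(t)$ gives the sign of the gradient term;
\item $\alpha_k>(1-\varepsilon)a(s_{k+1})\ge(1-\varepsilon)a(t)$, together with $f\ge0$, gives domination at infinity;
\item $K\ge\frac n2$ gives $L\Psi>0$.
\end{itemize}
Finitely many pieces suffice because $a\ge a(t_0)>0$ on the interval.

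The two routes buy different things. The paper's time-dependent exponent is a one-shot argument, but it relies on $a$ being nondecreasing; otherwise $\alpha' f$ would be an unbounded negative term. Your constant-exponent pieces only need $a$ to be continuous and bounded below by a positive constant, if you replace $a(s_k)$ and $a(s_{k+1})$ by the minimum and maximum of $a$ on each piece. In the same spirit, the paper itself uses the constant-exponent barrier $e^{\frac n2 t+f}$ in Lemma \ref{l2.5}, where the drift coefficient $(\tanh\delta t)^{-1}$ is decreasing.
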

\begin{proof}
	Let $$\varphi(x,t)=e^{at+f\tanh\delta t},$$
	where $a>0$ is some constant to be determined. Direct computations yield
	\begin{align}
		L\varphi&=\varphi\left[a+\delta f\cosh^{-2}\delta t-\Delta f\tanh\delta t-\left|\nabla f\right|^2\tanh^2\delta t+\left\langle\nabla f, \nabla f\right\rangle\tanh^2\delta t\right]\nonumber\\
		&\geq\varphi\left[a-\frac{n}{2}\tanh\delta t\right],\label{7.4}
	\end{align}
	where we used the fact $\Delta f=\frac{n}{2}-R\leq\frac{n}{2}$.
	Since $0<\tanh\delta t<1$, hence if we choose $a\geq\frac{n}{2}$, then by (\ref{7.4}),
	\begin{equation*}
		L\varphi(x,t)\geq0.
	\end{equation*}
	Let $v(x,t)=q(x,t)-C_1$, then $Lv\leq0$ and $v(x,t_0)\leq0$ for all $x\in M^n$. Define $\bar{v}=e^{-At}v$ for some constant $A>0$. To complete the proof, we only need to verify that for any $\epsilon>0$, it holds that $\bar{v}\leq\epsilon\varphi$ on $M\times\left[t_0,t_1\right]$. If the statement were false, then there exists a spacetime point $(x',t')\in M\times\left[t_0,t_1\right]$ and some $\epsilon>0$ such that $\left(\bar{v}-\epsilon\varphi\right)(x',t')>0$.
	Since $q(x,t)\leq Ce^{(1-\varepsilon)f(x)\tanh\delta t}$ and $\tanh\delta t\geq\tanh\delta t_0$, we know $\left(\bar{v}-\epsilon\varphi\right)(x,t)\longrightarrow-\infty$ as $d(x,p)\longrightarrow+\infty$ uniformly in $t$, i.e. $\bar{v}-\epsilon\varphi<0$ for $d(x,p)$ large enough independent of $t$. Besides, $\left(\bar{v}-\epsilon\varphi\right)(x,t_0)<0$ for all $x\in M^n$. Consequently, there exists a $t''\in(t_0,t')$ and $x''\in M$ such that $\left(\bar{v}-\epsilon\varphi\right)(x,t)\leq0$ for all $(x,t)\in M^n\times[t_0,t'']$ and $\left(\bar{v}-\epsilon\varphi\right)(x'',t'')=0$. In other words, $(x'',t'')$ is a local maximum point, hence at $(x'',t'')$, 
	\begin{equation*}
		\partial_t\left(\bar{v}-\epsilon\varphi\right)\geq0,\quad \nabla\left(\bar{v}-\epsilon\varphi\right)=0,\quad \Delta\left(\bar{v}-\epsilon\varphi\right)\leq0,\quad \left(\bar{v}-\epsilon\varphi\right)(x'',t'')=0.
	\end{equation*}
	Therefore at $(x'',t'')$, 
	\begin{align*}
		0\leq L\left(\bar{v}-\epsilon\varphi\right)&=L\bar{v}-\epsilon L\varphi\\
		&\leq L\bar{v}\\
		&=-Ae^{-At}v+e^{-At}Lv\\
		&\leq -A\epsilon\varphi<0,
	\end{align*}
	which is a contradiction. Thus our proof is complete.
\end{proof}

With the help of this maximum principle, we conclude
\begin{theorem}\label{t7.3}
	For any Ricci shrinker $(M^n,g,f)$ and $x,y\in M^n,t>0$,
	\begin{equation*}
		H_\phi(x,y,t)e^{\frac{f(x)}{2}\tanh\frac{t}{2}}\leq 
		\gamma(t):=\left\{
		\begin{array}{lr}
			(4\pi t)^{-\frac{n}{2}}e^{-\boldsymbol{\mu}+\frac{n}{2}t} ,&t\leq1,\\
			C(n)(4\pi )^{-\frac{n}{2}}e^{-\boldsymbol{\mu}},&t>1.
		\end{array}
		\right.	
	\end{equation*}
\end{theorem}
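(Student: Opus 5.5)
The case $t\le 1$ is exactly Proposition \ref{p7.1}, so only $t>1$ needs work. The plan is to use Proposition \ref{p7.1} at time $t_0=1$ as initial data and propagate it forward with the maximum principle of Lemma \ref{l7.2}, taking $\delta=\frac12$. I fix $y$ and set
\[
q(x,t)=P(x,y,t)e^{-\frac n4 t}=H_\phi(x,y,t)\exp\left(\frac{f(x)}{2}\tanh\frac t2-\frac n4 t\right).
\]

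First I check that $q$ is a subsolution. By Lemma \ref{l2.1}, since $R\ge 0$ on a Ricci shrinker and $0<\tanh\frac t2<1$,
\[
LP=\frac{nP}{4}\left(1-\tanh\frac t2\right)-\frac14\left(1-\tanh\frac t2\right)^2PR\le \frac n4 P .
\]
Hence $Lq\le 0$, with $L=\partial_t-\Delta+\tanh\frac t2\langle\nabla\cdot,\nabla f\rangle$ as in Lemma \ref{l7.2}. The cost is a factor $e^{\frac n4 t}$, which grows with $t$.

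To avoid this growth, I apply the argument on unit time windows. On $[k,k+1]$ I use $q_k=Pe^{-\frac n4(t-k)}$, so the extra factor is at most $e^{n/4}$ per step. This still compounds over many steps. The better route is to use the decay of $H_\phi$ for large time. Theorem \ref{t2.3}, or Theorem \ref{t2.1} together with Proposition \ref{p1.2}, gives $H_\phi\le C$, but that alone is not enough. I will exploit the sharper fact that $1-\tanh\frac t2\le 2e^{-t}$ is integrable in $t$. So I take
\[
q=P\exp\left(-\frac n4\int_1^t\left(1-\tanh\frac s2\right)ds\right).
\]
The integral is bounded by a dimensional constant, so $q$ is a genuine subsolution and differs from $P$ only by a factor depending on $n$. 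Propagating from $t_0=1$, where $q(\cdot,1)=P(\cdot,y,1)\le (4\pi)^{-\frac n2}e^{-\boldsymbol{\mu}+\frac n2}$ by Proposition \ref{p7.1}, yields $P\le C(n)(4\pi)^{-\frac n2}e^{-\boldsymbol{\mu}}$ for all $t>1$.

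The main obstacle is verifying the growth hypothesis $q\le Ce^{(1-\varepsilon)f\tanh\frac t2}$ of Lemma \ref{l7.2}. By Theorem \ref{t2.1}, $H_\phi$ is bounded on $[1,t_1]$, so
\[
q\le C\,e^{\frac12 f\tanh\frac t2},
\]
which gives the hypothesis with $1-\varepsilon=\frac12$. The lemma's proof only uses that $q\,e^{-f\tanh\delta t}\to 0$ at infinity uniformly in $t$, which holds because $f\sim d^2/4$ and $\tanh\frac t2\ge\tanh\frac12$. Strictly, Lemma \ref{l7.2} needs $q$ smooth on a closed interval. To handle this I will apply it on $[1,t_1]$ for arbitrary $t_1$, and if necessary work with $H_\phi^\Omega$ and pass to the limit as in Proposition \ref{p7.1}.
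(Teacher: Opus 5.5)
Your final argument is essentially the paper's proof. Your integrating factor $\exp\bigl(-\frac n4\int_1^t(1-\tanh\frac s2)\,ds\bigr)$ differs from the paper's $\exp\bigl(\frac n4\int_0^t(\tanh\frac s2-1)\,ds\bigr)$ only by a constant, and like the paper you check the growth hypothesis of Lemma \ref{l7.2} (with $\delta=\frac12$) via Theorem \ref{t2.1} and take the data at $t_0=1$ from Proposition \ref{p7.1}. The unit-time-window detour and the fallback to $H_\phi^\Omega$ are unnecessary, since $H_\phi$ is already smooth on $M^n\times[1,t_1]$.
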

\begin{proof}
	We have proved the case of $0<t\leq1$. For $t\geq1$ and fixed $y\in M^n$, let $q(x,t)=P(x,y,t)=H_{\phi}(x,y,t)\exp\left(\frac{f(x)}{2}\tanh\frac{t}{2}\right)$ for given $y\in M^n$. By Theorem \ref{t2.1}, we have $q(x,t)\leq (4\pi )^{-\frac{n}{2}}e^{-\boldsymbol{\mu}+\frac{n}{2}}e^{\frac{f(x)}{2}\tanh\frac{t}{2}}$ for $t\geq1$. 	Let $Q(x,y,t)=P(x,y,t)\exp\left(\int_{0}^{t}\frac{n}{4}\left(\tanh\frac{s}{2} -1\right)ds\right)$,	then by (\ref{7.1}),
	\begin{align*}
		&\left(\partial_t-\Delta_x\right) Q\left( x,y,t\right)+\tanh \frac{t}{2} \left<\nabla_x Q,\nabla_x f(x) \right>\leq 0.
	\end{align*}
	Applying the maximum principle of Lemma \ref{l7.2} to the above inequality implies $Q(x,y,t)\leq \sup\limits_{z\in M^n}Q(z,y,1)$ for all $x\in M^n$ and $t\geq1$, hence the proof is completed.
\end{proof}	

Besides point-wise estimates, we also need corresponding integral estimates as follows.
\begin{corollary}\label{c7.4}
	For any Ricci shrinker $(M^n,g,f)$,
	$$\int_{M^n}H_\phi(x,y,t)e^{\frac{f(x)}{2}\tanh\frac{t}{2}}dv(y)\leq \left(\frac{2}{1+e^{-t}}\right)^{n/2}.$$
	The equality holds true if and only if the Ricci shrinker is isometric to the Gaussian shrinker.
\end{corollary}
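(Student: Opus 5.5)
The plan is to integrate out the $y$-variable, apply the maximum principle to the resulting function of $(x,t)$, and read off the constant $\left(\frac{2}{1+e^{-t}}\right)^{n/2}$ from the zeroth-order term in Lemma \ref{l2.1}.

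\textbf{Setup.} Define
$$F(x,t)=e^{\frac{f(x)}{2}\tanh\frac t2}\int_{M^n}H_\phi(x,y,t)\,dv(y)=\int_{M^n}P(x,y,t)\,dv(y).$$
Since $P(\cdot,y,\cdot)$ satisfies (\ref{3.1}) for each $y$, integrating in $y$ gives
$$\Big(\partial_t-\Delta+\tanh\tfrac t2\langle\nabla\cdot,\nabla f\rangle\Big)F=\frac n4\Big(1-\tanh\tfrac t2\Big)F-\frac14\Big(1-\tanh\tfrac t2\Big)^2RF.$$
The last term is $\le 0$ because $R\ge0$ and $F\ge 0$.

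Next set
$$G=F\exp\Big(-\frac n4\int_0^t\big(1-\tanh\tfrac s2\big)ds\Big).$$
Then $LG\le0$ with $L=\partial_t-\Delta+\tanh\frac t2\langle\nabla\cdot,\nabla f\rangle$. A direct computation gives $\int_0^t(1-\tanh\frac s2)ds=t-2\ln\cosh\frac t2$, so
$$\exp\Big(\frac n4\big(t-2\ln\cosh\tfrac t2\big)\Big)=\Big(\frac{e^{t/2}}{\cosh\frac t2}\Big)^{n/2}=\Big(\frac{2}{1+e^{-t}}\Big)^{n/2}.$$
It therefore suffices to prove $G\le 1$.

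\textbf{Maximum principle.} To avoid controlling the growth of $F$ near $t=0$, which Lemma \ref{l7.2} would need since it requires $t_0>0$, I will work on a compact exhaustion $\Omega_i$.
\begin{itemize}
\item Put $F^{\Omega}(x,t)=e^{\frac{f(x)}{2}\tanh\frac t2}\int_\Omega H^\Omega_\phi(x,y,t)\,dv(y)$, and define $G^\Omega$ from it in the same way.
\item $G^\Omega$ is smooth on $\Omega\times(0,T]$, vanishes on $\partial\Omega$, and tends to $1$ as $t\to0$ at interior points; it is bounded by $1$ up to $t=0$ since $\int_\Omega H^\Omega_\phi(x,y,t)\,dv(y)\to1$ there.
\item The weak maximum principle for $L$ on the bounded cylinder $\Omega\times[0,T]$ gives $G^\Omega\le1$. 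The drift term is harmless because $f$ is smooth on $\bar\Omega$.
\item Letting $\Omega_i\uparrow M^n$, $H^{\Omega_i}_\phi\uparrow H_\phi$, so monotone convergence yields $G\le1$, which is the inequality.
\end{itemize}

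\textbf{Equality case.} This is the step I expect to be the main obstacle.

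First suppose equality holds at some $(x_0,t_0)$ with $t_0>0$. Then $G$ attains its supremum $1$ at an interior point while $LG\le0$. By the strong maximum principle, $G\equiv1$ on $M^n\times(0,t_0]$. Hence $LG\equiv0$ there, which forces $(1-\tanh\frac t2)^2RF\equiv0$. Since $F>0$, this gives $R\equiv0$.

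A Ricci shrinker with $R\equiv0$ is isometric to the Gaussian shrinker: this follows from $R+\Delta f=\frac n2$ together with $Rc\ge0$ rigidity, or from the known rigidity result (Pigola–Rimoldi–Setti).

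Conversely, on the Gaussian shrinker $R\equiv0$, so the differential inequality is an equality. One then checks directly, either from this or from the explicit Mehler kernel of Section \ref{sec-pe} by a Gaussian integral in $y$, that $F=\left(\frac{2}{1+e^{-t}}\right)^{n/2}$ exactly.

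The delicate point is applying the strong maximum principle to $G$ on the noncompact $M^n$. Because equality is attained at an interior point, this can be done locally on balls. One uses that $F$ is smooth and positive for $t>0$, and that the set $\{G=1\}$ is closed and, by the local strong maximum principle, also open.
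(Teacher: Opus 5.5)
Your proof is correct, and it takes a genuinely different route from the paper. The paper uses the Doob transform to write the integral as $e^{-\lambda(t)f(x)}P_t^f(e^{f/2})(x)$ with $\lambda(t)=(1+e^t)^{-1}$. It then builds the explicit supersolution $\Phi(x,s)=e^{\lambda(s)f(x)+c(s)}$, which satisfies $(\partial_s-\Delta_f)\Phi=\lambda^2R\Phi\ge0$. Next it proves $P_t^f(e^{f/2})\le\Phi(\cdot,t)$ by semigroup duality, namely monotonicity of $s\mapsto\int_{M^n}P^f_{t-s}\eta\,\Phi(\cdot,s)\,dv_f$. Equality is read off from a Duhamel defect identity.

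Your $G$ is exactly the ratio $P_t^f(e^{f/2})/\Phi(\cdot,t)$, with $c(t)=\frac n4\int_0^t(1-\tanh\frac s2)\,ds$. So the barrier is the same, but you control it through the drift equation of Lemma \ref{l2.1} and the maximum principle rather than through duality. Your route is more local: the weak maximum principle on compact cylinders plus $H^{\Omega_i}_\phi\uparrow H_\phi$ needs no global input. The paper's duality step, by contrast, needs integrability of $\Phi\,dv_f$ and $R\Phi\,dv_f$, and an integration by parts against the non-compactly supported $\Phi$. In exchange, the paper gets a quantitative defect formula, which makes the rigidity immediate: the inequality is strict at every $(x,t)$ as soon as $R\not\equiv0$.

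A few points need tightening.
\begin{itemize}
\item \textbf{Bound near $t=0$.} Pointwise convergence $\int_\Omega H^\Omega_\phi\,dv\to1$ at interior points does not give it. That convergence is not uniform near $\partial\Omega\times\{0\}$, and $\phi$ can be negative. Use $\phi\ge-\frac n4$ instead, which gives $\int_\Omega H^\Omega_\phi(x,y,t)\,dv(y)\le e^{nt/4}$ and hence $\limsup_{t\to0}G^\Omega\le1$ uniformly on $\bar\Omega$.
\item \textbf{Regularity of $F$ in the equality case.} You need $F$ to be smooth and to satisfy the integrated equation on all of $M^n$. This follows from interior parabolic estimates for $F^{\Omega_i}\uparrow F$, which are locally bounded by the inequality you have just proved.
\item \textbf{Shortcut.} Once that is in place, you can bypass the parabolic strong maximum principle. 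At a point where $G=1$ you have $LG\ge0$, whereas $LG=-\frac14e^{-c(t)}\left(1-\tanh\frac t2\right)^2RF\le0$, so $R(x_0)=0$.
\item \textbf{Rigidity.} It does not follow from $R+\Delta f=\frac n2$ together with $Rc\ge0$. The mechanism is $\Delta_fR=R-2|Rc|^2$: the strong minimum principle gives $R\equiv0$, hence $Rc\equiv0$ and $\nabla^2f=\frac12g$, which forces the Gaussian shrinker.
\end{itemize}
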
	
\begin{proof}
	Let $dv_f:=e^{-f}dv$
	and denote by $	P_t^f=e^{t\Delta_f}$
	the \(f\) heat semigroup. Set $	q(t):=\tanh\frac t2$
	and define
	\begin{align*}
		I(x,t):=\int_{M^n}H_\phi(x,y,t)\exp\left(\frac{q(t)}2f(x)\right)dv(y).
	\end{align*}
	Recall that the Doob's transform gives
	\begin{equation*}
		H_\phi(x,y,t)=H_f(x,y,t)\exp\left(-\frac{f(x)+f(y)}2\right).
	\end{equation*}
	Consequently,
	\begin{align*}
		I(x,t)&=\exp\left(-\frac{1-q(t)}2f(x)\right)\int_{M^n}H_f(x,y,t)e^{-f(y)/2}dv(y)\\
		&=\exp\left(-\frac{1-q(t)}2f(x)\right)\int_{M^n}H_f(x,y,t)e^{f(y)/2}dv_f(y).
	\end{align*}
	Define $\lambda(t):=\frac{1-q(t)}2$.
	Since $	q(t)=\frac{e^t-1}{e^t+1}$, we have $\lambda(t)=\frac1{1+e^t}$.
	Thus
	\begin{align}
		I(x,t)=e^{-\lambda(t)f(x)}P_t^f\left(e^{f/2}\right)(x).\label{eq:IviaP}
	\end{align}
	It therefore remains to estimate $P_t^f\left(e^{f/2}\right)$. For \(s\ge0\), set $	\lambda(s):=\frac1{1+e^s}$, then $\lambda(0)=\frac12$ and
	\begin{align*}
		\lambda'(s)=-\frac{e^s}{(1+e^s)^2}=\lambda^2(s)-\lambda(s).
	\end{align*}
	Let $c(s):=\frac n2\int_0^s\lambda(\tau)d\tau$ and $\Phi(x,s):=\exp\left(\lambda(s)f(x)+c(s)\right).$
	Since $\Delta_f f=\frac n2-f$ and $\left|\nabla f\right|^2=f-R$, 
	we compute
	\begin{align*}
		\partial_s\Phi&=\Phi\left(\lambda' f+c'\right),\\
		\Delta_f\Phi&=\Phi\left(\lambda\Delta_f f+\lambda^2|\nabla f|^2\right)\\
		&=\Phi\left[\lambda\Bigl(\frac n2-f\Bigr)+\lambda^2(f-R)\right]\\
		&=\Phi\left[\frac n2\lambda+(\lambda^2-\lambda)f-\lambda^2R\right].
	\end{align*}
	Using \(\lambda'=\lambda^2-\lambda\) and \(c'=\frac n2\lambda\), we obtain the exact identity
	\begin{align}
		\left(\partial_s-\Delta_f\right)\Phi=\lambda^2R\Phi.\label{eq:super}
	\end{align}
	Since \(R\ge0\), we have
	\begin{align*}
		\left(\partial_s-\Delta_f\right)\Phi\ge0.
	\end{align*}
	Moreover, $	\Phi(x,0)=e^{f(x)/2}$.
	
	Since $0<\lambda(s)\le\frac12$, we have on each bounded time interval \(0\le s\le t\),
	\begin{align*}
		\Phi(\cdot,s)dv_f=e^{c(s)}e^{-(1-\lambda(s))f}dv\leq e^{c(t)}e^{-f/2}dv.
	\end{align*}
	Similarly, by \(0\le R\le f\), we have
	\begin{align*}
		R\Phi(\cdot,s)dv_f\leq e^{c(t)}f e^{-f/2}dv.
	\end{align*}
	Thus both right-hand sides are integrable due to the facts that $f$ has quadratic growth and $\left| B(p,r)\right|$ has at most polynomial growth. For details, see \cite{CZ}.
	
	Fix a nonnegative function \(\eta\in C_c^\infty(M)\) and define
	\begin{align*}
		J(s):=\int_{M^n}P_{t-s}^f\eta(y)\Phi(y,s)dv_f(y),\quad0\le s\le t.
	\end{align*}
	For \(0<s<t\),
	\begin{equation*}
		\frac{d}{ds}P_{t-s}^f\eta=-\Delta_fP_{t-s}^f\eta.
	\end{equation*}
	By the self-adjointness of $\Delta_f$ with respect to $dv_f$, we obtain
	\begin{align*}
		J'(s)&=\int_{M^n}\left[-\Delta_fP_{t-s}^f\eta\cdot\Phi+P_{t-s}^f\eta\cdot\partial_s\Phi\right]dv_f\\
		&=\int_{M^n}P_{t-s}^f\eta\left(\partial_s-\Delta_f\right)\Phi dv_f\\
		&=\int_{M^n}P_{t-s}^f\eta\lambda^2R\,\Phi dv_f\\
		&\ge0.
	\end{align*}
	Therefore \(J(0)\le J(t)\). Since \(\Phi(\cdot,0)=e^{f/2}\), then
	\begin{align*}
		J(0)=\int_{M^n}P_t^f\eta\,e^{f/2}dv_f.
	\end{align*}
	By symmetry of the \(f\)-heat semigroup,
	\begin{align*}
		J(0)=\int_{M^n}\eta P_t^f(e^{f/2})dv_f.
	\end{align*}
	On the other hand,
	\begin{align*}
		J(t)=\int_{M^n}\eta\,\Phi(\cdot,t)dv_f.
	\end{align*}
	It follows that for every nonnegative \(\eta\in C_c^\infty(M^n)\),
	\begin{equation*}
		\int_{M^n}\eta\left[P_t^f\left(e^{f/2}\right)-\Phi(\cdot,t)\right]dv_f\leq0.
	\end{equation*}
	Hence for any $x\in M^n$,
	\begin{align}
		P_t^f\left(e^{f/2}\right)(x)\leq\Phi(x,t)=e^{\lambda(t)f(x)+c(t)}\label{eq:semigroup}
	\end{align}
	Substituting \eqref{eq:semigroup} into \eqref{eq:IviaP} gives $	I(x,t)\leq e^{c(t)}$.
	
	It remains to compute \(c(t)\). By the definition,
	\begin{align*}
		\int_0^t\lambda(s)ds=\int_0^t\frac{ds}{1+e^s}=\ln\left(\frac{2}{1+e^{-t}}\right),
	\end{align*}
	which implies
	\begin{align*}
		c(t)=\frac n2\ln\left(\frac{2}{1+e^{-t}}\right)
	\end{align*}
	Therefore
	\begin{align*}
		I(x,t)\leq e^{c(t)}=\left(\frac{2}{1+e^{-t}}\right)^{n/2}.
	\end{align*}
	
	On \(n\)-dimensional Gaussian shrinker,
	\begin{align*}
		I(x,t)=\left(\frac{2}{1+e^{-t}}\right)^{n/2}.
	\end{align*}
	Moreover, applying Duhamel's principle to \eqref{eq:super} gives the exact defect identity
	\begin{align*}
		\left(\frac{2}{1+e^{-t}}\right)^{n/2}-I(x,t)=e^{-\lambda(t)f(x)}\int_0^tP_{t-s}^f\left(\lambda(s)^2Re^{\lambda(s)f+c(s)}\right)(x)ds.	
	\end{align*}
	Hence the equality holds if and only if $R=0$, i.e. it's isometric to the Gaussian shrinker.
\end{proof}
Note that due to the semigroup property of $H_\phi$, we have
\begin{align*}
	H_\phi(x,y,2t)e^{\frac{f(x)+f(y)}{2}\tanh\frac{t}{2}}=\int_{M^n}H_\phi(x,z,t)e^{\frac{f(x)}{2}\tanh\frac{t}{2}}H_\phi(z,y,t)e^{\frac{f(y)}{2}\tanh\frac{t}{2}}dv(z).
\end{align*}
Therefore combining Theorem \ref{t7.3} and Corollary \ref{c7.4} gives 
\begin{theorem}[\textbf{=Theorem \ref{t1.1'}}]\label{t3.8}
	For any Ricci shrinker and $t>0$,
	\begin{align*}
		H_\phi(x, y, t) \leq C(n)e^{-\boldsymbol{\mu}}\Gamma(t) \exp \left(-\frac{f(x)+f(y)}{2} \tanh \frac{t}{4}\right),
	\end{align*}
	where
	$\Gamma(t):=\left\{
	\begin{array}{lr}
		t^{-\frac{n}{2}} ,&t\leq1,\\
		1 ,&t>1.
	\end{array}
	\right.	$ 
	Hence by $H_{f}=H_{\phi} e^{\frac{f(x)+f(y)}{2}}$,
	\begin{align*}
		H_f(x, y, t) \leq C(n)e^{-\boldsymbol{\mu}}\Gamma(t) \exp \left(\frac{f(x)+f(y)}{2} \left(1-\tanh \frac{t}{4}\right)\right).
	\end{align*}
\end{theorem}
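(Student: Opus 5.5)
The plan is to write $t=2s$ and use the semigroup identity displayed just before the statement:
\begin{align*}
H_\phi(x,y,2s)e^{\frac{f(x)+f(y)}{2}\tanh\frac{s}{2}}=\int_{M^n}P(x,z,s)\,P(y,z,s)\,dv(z),
\end{align*}
where $P(x,z,s)=H_\phi(x,z,s)e^{\frac{f(x)}{2}\tanh\frac s2}$. Here we use the symmetry $H_\phi(z,y,s)=H_\phi(y,z,s)$, which is inherited from the symmetry of $H_f$ via the Doob transform. Since $\tanh\frac s2=\tanh\frac t4$, the left-hand side is exactly the weighted quantity in the statement.

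Next we bound one factor pointwise and the other in $L^1$. By Theorem \ref{t7.3}, $P(x,z,s)\le\gamma(s)$ uniformly in $z$. Then
\begin{align*}
\int_{M^n}P(x,z,s)P(y,z,s)\,dv(z)\le \gamma(s)\int_{M^n}H_\phi(y,z,s)e^{\frac{f(y)}{2}\tanh\frac s2}\,dv(z)\le\gamma(s)\left(\frac{2}{1+e^{-s}}\right)^{n/2},
\end{align*}
where the last step is Corollary \ref{c7.4} with $x$ replaced by $y$. The factor $(2/(1+e^{-s}))^{n/2}\le 2^{n/2}$ is absorbed into $C(n)$.

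It remains to compare $\gamma(t/2)$ with $e^{-\boldsymbol{\mu}}\Gamma(t)$, splitting by the size of $t$.
\begin{itemize}
\item For $t\le 2$ we have $s\le1$, so $\gamma(s)=(2\pi t)^{-n/2}e^{-\boldsymbol{\mu}+nt/4}\le C(n)e^{-\boldsymbol{\mu}}t^{-n/2}$.
\item For $1<t\le 2$, the factor $t^{-n/2}$ is bounded above and below by dimensional constants, so it matches $\Gamma(t)=1$ up to $C(n)$.
\item For $t>2$ we have $s>1$, and $\gamma(s)=C(n)(4\pi)^{-n/2}e^{-\boldsymbol{\mu}}$ directly.
\end{itemize}
This gives the $H_\phi$ bound. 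The $H_f$ bound then follows by multiplying by $e^{\frac{f(x)+f(y)}{2}}$, using $H_f=H_\phi e^{\frac{f(x)+f(y)}{2}}$. For the form stated in Theorem \ref{t1.1'}, note that $\Gamma(t)\le C(n)(1-e^{-t})^{-n/2}$, since $1-e^{-t}\sim t$ for small $t$ and is bounded below for large $t$.

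The only point needing care is the bookkeeping of the exponential weights in the semigroup identity: the weight $e^{\frac{f(x)}{2}\tanh\frac s2}$ goes with $H_\phi(x,z,s)$ and $e^{\frac{f(y)}{2}\tanh\frac s2}$ with $H_\phi(z,y,s)$, with no weight on $z$. This is precisely the configuration covered by Theorem \ref{t7.3}, which is uniform in the second variable, and by Corollary \ref{c7.4}, which integrates over the second variable.
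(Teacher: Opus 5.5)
Your proposal is correct and is essentially the paper's argument. The paper uses the same half-time semigroup identity, bounds one factor pointwise by Theorem~\ref{t7.3} and the other in $L^1$ by Corollary~\ref{c7.4}; your comparison of $\gamma(t/2)$ with $e^{-\boldsymbol{\mu}}\Gamma(t)$ just writes out the bookkeeping that the paper leaves implicit.
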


\begin{remark}\label{r7.5}
	Recall $H_\phi(x,y,t)$ on $\left(\mathbb{R}^1,g,e^{-\frac{x^2}{4}}dx\right)$, we know $\tanh\frac{t}{4}$ is sharp. Moreover, since 
	$$H^{\mathbb{R}^1}_\phi(x,y,t)e^{\frac{x^2}{8}\tanh\frac{t}{2}}=\frac{1}{\left(4\pi (1-e^{-t}) \right)^{\frac{1}{2}}}
	\exp \left[-\frac{\left((e^t+1)y-2e^{\frac{t}{2}}x\right)^2}{8(e^{2t}-1)}\right]$$
	and
	$$\int_{-\infty}^{\infty}H^{\mathbb{R}^1}_\phi(x,y,t)e^{\frac{x^2}{8}\tanh\frac{t}{2}}dy=\frac{\sqrt{2}}{\sqrt{1+e^{-t}}}\in\left(1,\sqrt{2}\right),$$
	hence the coefficient $e^{\frac{f(x)}{2}\tanh\frac{t}{2}}$ in Theorem \ref{t7.3} and Corollary \ref{c7.4} is sharp.
\end{remark}	

For $0<\beta<1$, write $H_\phi=H_\phi^\beta H_\phi^{1-\beta}$, then just combining Theorem \ref{t2.3} and \ref{t3.8}, we are able to obtain an upper bound including both $f(x)+f(y)$ and $d^2(x,y)$.
\begin{proposition}\label{p7.6}
	For any Ricci shrinker $(M^n,g,f)$, $x,y\in M^n,t>0$, $\varepsilon>0$ and $0<\beta<1$, there exists a constant $C=C(n,\varepsilon)$ such that
	\begin{equation*}
		H_\phi(x,y,t)\leq C\gamma(t)\exp\left[ -\beta\left( \frac{f(x)+f(y)}{2}\tanh\frac{t}{4}\right) -(1-\beta)\frac{d^2(x,y)}{(4+\varepsilon)t}\right],
	\end{equation*}
	where $\gamma(t)$ is the same as in (\ref{2.6*}).
\end{proposition}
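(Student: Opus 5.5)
The plan is to interpolate geometrically between the two upper bounds already available: write
\begin{equation*}
H_\phi(x,y,t)=H_\phi(x,y,t)^{\beta}\,H_\phi(x,y,t)^{1-\beta},
\end{equation*}
estimate the first factor by Theorem \ref{t3.8} and the second by Theorem \ref{t2.3}. Both bounds are pointwise for the same quantity, so raising them to the powers $\beta$ and $1-\beta$ and multiplying is legitimate and gives directly
\begin{equation*}
H_\phi(x,y,t)\le \Bigl(C(n)e^{-\boldsymbol{\mu}}\Gamma(t)\Bigr)^{\beta}\Bigl(\tfrac{4}{\theta^{n/2}}\gamma(t)\Bigr)^{1-\beta}\exp\Bigl[-\beta\,\tfrac{f(x)+f(y)}{2}\tanh\tfrac t4-(1-\beta)\tfrac{d^2(x,y)}{(4+\varepsilon)t}\Bigr],
\end{equation*}
with $\theta=\theta(n,\varepsilon)$ from Theorem \ref{t2.3}. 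The exponential part is exactly the one claimed, so everything reduces to controlling the prefactor by $C(n,\varepsilon)\gamma(t)$ with $C$ independent of $\beta$.

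For the prefactor, I compare $e^{-\boldsymbol{\mu}}\Gamma(t)$ with $\gamma(t)$ from (\ref{2.6*}). For $t\le1$, $\gamma(t)=(4\pi t)^{-n/2}e^{-\boldsymbol{\mu}+\frac n4 t}\ge (4\pi)^{-n/2}e^{-\boldsymbol{\mu}}t^{-n/2}$, so $e^{-\boldsymbol{\mu}}\Gamma(t)\le(4\pi)^{n/2}\gamma(t)$. For $t>1$, $\gamma(t)=(4\pi)^{-n/2}e^{-\boldsymbol{\mu}+\frac n4}\ge(4\pi)^{-n/2}e^{-\boldsymbol{\mu}}$, giving the same comparison. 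Hence $C(n)e^{-\boldsymbol{\mu}}\Gamma(t)\le C_1(n)\gamma(t)$. Then the prefactor is at most $C_1(n)^\beta\bigl(4\theta^{-n/2}\bigr)^{1-\beta}\gamma(t)\le\max\{C_1(n),4\theta^{-n/2}\}\,\gamma(t)$, and the maximum depends only on $n,\varepsilon$ (assume without loss of generality both constants are $\ge1$). This proves the statement.

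I expect no real obstacle. The only point needing care is the normalization mismatch, namely that Theorem \ref{t3.8} is phrased with $\Gamma(t)$ while Theorem \ref{t2.3} uses $\gamma(t)$; the comparison in the second step resolves it uniformly in $t$. It is also worth noting that the constant obtained is uniform in $\beta\in(0,1)$, since $a^\beta b^{1-\beta}\le\max\{a,b\}$.
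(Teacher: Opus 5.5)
Your proposal is correct and follows exactly the paper's argument: the paper also writes $H_\phi=H_\phi^{\beta}H_\phi^{1-\beta}$ and combines Theorem \ref{t3.8} with Theorem \ref{t2.3}. Your comparison $e^{-\boldsymbol{\mu}}\Gamma(t)\le(4\pi)^{n/2}\gamma(t)$ and the bound $a^{\beta}b^{1-\beta}\le\max\{a,b\}$ supply the constant bookkeeping, including uniformity in $\beta$, which the paper leaves implicit.
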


Generally, it's hard to derive point-wise estimate of $H_\phi(x,y,t)e^{\frac{f(x)+f(y)}2\tanh\frac t4}$, but for its integral, we still have the following sharp estimate. Since its proof is almost the same as in Corollary \ref{c7.4}, we only sketch it.
\begin{proposition}
	For any Ricci shrinker $(M^n,g,f)$, $x\in M^n$ and $t>0$, 
	\begin{align*}
		\int_{M^n} H_\phi(x,y,t)e^{\frac{f(x)+f(y)}2\tanh\frac t4}dv(y)\leq e^{nt/4}.
	\end{align*}
	The equality holds true if and only if the Ricci shrinker is isometric to the Gaussian shrinker.
\end{proposition}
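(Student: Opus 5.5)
Following the proof of Corollary \ref{c7.4}, I will rewrite the integral in terms of the $f$-heat semigroup $P_t^f=e^{t\Delta_f}$ and then compare with an explicit exponential supersolution. Set $q=q(t):=\tanh\frac t4$ with $t>0$ fixed, and write $dv_f=e^{-f}dv$. By the Doob transform $H_\phi=H_fe^{-\frac{f(x)+f(y)}2}$ and $dv=e^{f}dv_f$, the integrand becomes $H_f(x,y,t)\,e^{-\frac{1-q}{2}f(x)}e^{\frac{1+q}{2}f(y)}$ against $dv_f(y)$. Hence
\begin{align*}
I(x,t):=\int_{M^n}H_\phi(x,y,t)e^{\frac{f(x)+f(y)}2 q}\,dv(y)=e^{-\frac{1-q}{2}f(x)}\,P_t^f\left(e^{\frac{1+q}{2}f}\right)(x).
\end{align*}
Everything therefore reduces to the pointwise bound $P_t^f(e^{\frac{1+q}2 f})\le e^{\frac{1-q}2 f+\frac{nt}4}$.

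For the comparison function I take $\Phi(y,s)=\exp(\lambda(s)f(y)+c(s))$ for $0\le s\le t$. The computation in Corollary \ref{c7.4} uses $\Delta_ff=\frac n2-f$ and $|\nabla f|^2=f-R$, and it shows that whenever
\begin{align*}
\lambda'=\lambda^2-\lambda,\qquad c'=\tfrac n2\lambda,
\end{align*}
one has $(\partial_s-\Delta_f)\Phi=\lambda^2R\Phi\ge 0$. The general solution of the first equation is $\lambda(s)=\frac{1}{1+Ke^{s}}$. I choose $K$ so that $\lambda(0)=\frac{1+q}{2}$, which gives $K=\frac{1-q}{1+q}=e^{-t/2}$, because $q=\tanh\frac t4$. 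With this choice, $\lambda(t)=\frac1{1+e^{t/2}}=\frac{1-q}2$ automatically. For $c$, set $c(s)=\frac n2\int_0^s\lambda$. Under the substitution $s\mapsto t-s$, the function $\frac1{1+e^{s-t/2}}$ becomes $\frac1{1+e^{-(s-t/2)}}$, and the two sum to $1$. Hence $\int_0^t\lambda=\frac t2$ and $c(t)=\frac{nt}4$. So $\Phi(\cdot,0)=e^{\frac{1+q}2f}$ and $\Phi(\cdot,t)=e^{\frac{1-q}2f+\frac{nt}4}$, which are exactly the endpoints needed.

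Next I run the duality argument of Corollary \ref{c7.4} verbatim. For a fixed nonnegative $\eta\in C_c^\infty(M^n)$, let $J(s)=\int P^f_{t-s}\eta\,\Phi(\cdot,s)\,dv_f$. Self-adjointness of $\Delta_f$ gives $J'(s)=\int P^f_{t-s}\eta\,\lambda^2R\Phi\,dv_f\ge0$. Then $J(0)\le J(t)$, and by symmetry of $P_t^f$ this yields $P_t^f(e^{\frac{1+q}2f})\le\Phi(\cdot,t)$, hence $I\le e^{nt/4}$.

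The main point to check, and the one I expect to be the obstacle, is that these integrations by parts are legitimate. Since $\lambda(s)\le\frac{1+q}2<1$, we get $\Phi\,dv_f\le e^{c(t)}e^{-\frac{1-q}2 f}dv$, and likewise $R\Phi\,dv_f\le e^{c(t)}fe^{-\frac{1-q}2 f}dv$ using $0\le R\le f$. Both are integrable because $f$ grows quadratically while volume grows polynomially \cite{CZ}. The margin $\frac{1-q}{2}>0$ shrinks as $t\to\infty$, but it is positive for each fixed $t$, so this suffices.

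For the equality case, Duhamel's principle gives
\begin{align*}
e^{nt/4}-I(x,t)=e^{-\frac{1-q}2f(x)}\int_0^tP^f_{t-s}\left(\lambda^2R\Phi(\cdot,s)\right)(x)\,ds.
\end{align*}
Since $P^f$ is positivity improving, this defect vanishes if and only if $R\equiv0$, that is, if and only if the shrinker is the Gaussian shrinker. Conversely, on the Gaussian shrinker $R=0$, so $\Phi$ is an exact solution and equality holds.
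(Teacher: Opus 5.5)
Your proposal is correct and follows the paper's proof essentially step for step. You use the same Doob-transform reduction to $P_t^f\left(e^{\frac{1+q}{2}f}\right)$, the same supersolution $\Phi=e^{\lambda f+c}$ (your $\lambda(s)=\left(1+e^{s-t/2}\right)^{-1}$ is exactly the paper's $\beta/(\beta+(1-\beta)e^s)$), the same semigroup duality argument, and the same Duhamel defect identity for the equality case. Your symmetry computation giving $\int_0^t\lambda=t/2$ fills in the step the paper leaves as ``similar computations'', and your integrability check supplies the justification the paper only sketches.
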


\begin{proof}
	Fix \(T>0\), and set $a:=\tanh\frac{T}{4},\alpha:=\frac{1-a}{2}$, and $\beta:=\frac{1+a}{2}.$
	By the Doob's transform,
	\begin{align}
		\int_{M^n} H_\phi(x,y,T)e^{F_T(x,y)}dv(y)=e^{-\alpha f(x)}P_T^f\!\left(e^{\beta f}\right)(x),
		\label{eq:4.11}
	\end{align}
	where \(F_T(x,y)=\frac a2(f(x)+f(y))\). This is the same semigroup quantity treated in the proof of Corollary \ref{c7.4}. Indeed, define
	\begin{align*}
		\lambda(s):=\frac{\beta}{\beta+(1-\beta)e^s},\quad c(s):=\frac n2\int_0^s\lambda(\tau)d\tau,\quad
		\Phi(x,s):=e^{\lambda(s)f(x)+c(s)}.
	\end{align*}
	Then
	\begin{align*}
		\lambda(0)=\beta,\quad\lambda(T)=\alpha,\quad\lambda'=\lambda^2-\lambda,
	\end{align*}
	and, exactly as in Corollary \ref{c7.4},
	\begin{align*}
		\left(\partial_s-\Delta_f\right)\Phi=\lambda(s)^2R\Phi\geq 0.
	\end{align*}
	The same heat semigroup duality argument gives
	\begin{align*}
		P_T^f\left(e^{\beta f}\right)(x)\leq\Phi(x,T)=e^{\alpha f(x)+c(T)}.
	\end{align*}
	Finally, similar computations implies $c(T)=\frac{nT}{4}$
	and hence \eqref{eq:4.11} yields
	\begin{align*}
		\int_{M^n}H_\phi(x,y,T)\exp\left[\frac{f(x)+f(y)}2\tanh\frac T4\right]dv(y)\leq e^{nT/4}.
	\end{align*}
	Moreover, the exact defect is
	\begin{align*}
		e^{\alpha f(x)+nT/4}-P_T^f\left(e^{\beta f}\right)(x)=\int_0^TP_{T-s}^f\left(\lambda(s)^2R\,e^{\lambda(s)f+c(s)}\right)(x)ds.
	\end{align*}
	Thus equality holds if and only if it's isometric to the Gaussian shrinker.
\end{proof}

\section{Upper bounds: Part B}\label{sec-upperb}
If we set one point in the heat kernel as the minimum point of potential function $f$, the estimates of Theorem \ref{t3.8} can be improved as follows.
\begin{proposition}\label{p3.10}
	For any Ricci shrinker $(M^n,g,f)$, $x\in M^n$ and $\delta\in\left(\frac{1}{2}, \frac{2}{3}\right)$, there exists a constant $C=C(n,\delta)$ such that for $t\geq \frac{2}{3\delta}$,
	\begin{equation}\label{2.17}
		H_\phi(p,x,t)\leq Ce^{\boldsymbol{-\mu}}e^{-\frac{f(x)}{2}(\tanh \delta t)^{-1}}.
	\end{equation}
\end{proposition}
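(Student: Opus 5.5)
The plan is to work with $H_f$ rather than $H_\phi$, and to prove a barrier estimate by a parabolic comparison argument. Set $\kappa(t):=\frac{1}{e^{2\delta t}-1}$. Since $H_\phi=H_fe^{-\frac{f(p)+f(x)}{2}}$ and $f(p)\ge 0$, and since
\begin{align*}
\kappa+\tfrac12=\tfrac12\coth\delta t,
\end{align*}
the inequality (\ref{2.17}) is equivalent to
\begin{align*}
u(x,t):=H_f(p,x,t)\le Ce^{-\boldsymbol{\mu}}e^{-\kappa(t)f(x)}.
\end{align*}
This is the first estimate of Theorem \ref{t1.2'}, and it is exactly sharp for the Gaussian with $\delta=\frac12$. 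The function $u$ solves $(\partial_t-\Delta_f)u=0$ in $x$. The natural barrier is $\Psi(x,t)=Ae^{-\kappa(t)f(x)}$.

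\textbf{Step 1: $\Psi$ is a supersolution.} Using $\Delta_ff=\frac n2-f$ and $|\nabla f|^2=f-R$, I compute
\begin{align*}
(\partial_t-\Delta_f)\Psi=\Psi\left[-\kappa'f+\kappa\left(\tfrac n2-f\right)-\kappa^2(f-R)\right].
\end{align*}
Since $\kappa'=-2\delta\kappa(1+\kappa)$, the $f$-terms combine to $(2\delta-1)\kappa(1+\kappa)f$. This is $\ge0$ precisely because $\delta>\frac12$. The remaining terms $\kappa\frac n2+\kappa^2R$ are also nonnegative, since $R\ge0$. Hence $(\partial_t-\Delta_f)\Psi\ge0$ for every constant $A>0$.

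\textbf{Step 2: initial comparison at a small time $t_0=t_0(n,\delta)$.} At small times, $\kappa(t)\sim\frac{1}{2\delta t}$, and this is strictly smaller than $\frac1t$. I will use Proposition \ref{p7.6} at $(p,x,t_0)$ with $\beta$ small, together with $f(p)\le \frac n2$ and the standard comparison $\frac14(d(p,x)-5n)_+^2\le f(x)\le\frac14(d(p,x)+\sqrt{2n})^2$ from \cite{HM,CZ}. This gives
\begin{align*}
H_f(p,x,t_0)\le C(n,\varepsilon)\gamma(t_0)e^{-\boldsymbol{\mu}}\exp\left(\tfrac{f(x)}{2}-\tfrac{(1-\beta)(1-\varepsilon')}{t_0}f(x)+C\right).
\end{align*}
The $d$-versus-$f$ error terms are absorbed into $C$ by sacrificing $\varepsilon'$. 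The required coefficient inequality is
\begin{align*}
\frac{(1-\beta)(1-\varepsilon')}{t_0}-\frac12\ge\kappa(t_0).
\end{align*}
It holds once $\beta,\varepsilon'$ are small and $t_0$ is small, both depending on $\delta$, because $\kappa(t_0)\le \frac{1}{2\delta t_0}$. This gives $u(\cdot,t_0)\le\Psi(\cdot,t_0)$ with $A=C(n,\delta)e^{-\boldsymbol{\mu}}$. The range $t\ge\frac{2}{3\delta}$ in the statement then lies inside $[t_0,\infty)$, so the conclusion covers it.

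\textbf{Step 3: propagate by a maximum principle.} This is the main obstacle. Lemma \ref{l7.2} is stated for the drift $\tanh\delta t\langle\nabla\cdot,\nabla f\rangle$, not for $\Delta_f$. Moreover, Theorem \ref{t3.8} only gives the a priori growth $u\le Ce^{\frac12(1-\tanh\frac t4)f}$. Growth of order $e^{\lambda f}$ with $\lambda<1$ is integrable against $e^{-f}dv$, but a pointwise barrier $e^{\lambda f}$ is not a supersolution for $\Delta_f$.

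I would therefore avoid a pointwise argument and use the duality trick from Corollary \ref{c7.4}. For $0\le\eta\in C_c^\infty$ and $t>t_0$, set
\begin{align*}
J(s)=\int P^f_{t-s}\eta\,(u-\Psi)(\cdot,s)\,dv_f .
\end{align*}
By Step 1 and the self-adjointness of $\Delta_f$, this gives $J'(s)=-\int P^f_{t-s}\eta\,(\partial_s-\Delta_f)\Psi\,dv_f\le0$. Hence $\int\eta(u-\Psi)(\cdot,t)dv_f\le J(t_0)\le0$.

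To justify the integrations by parts, I need $u$, $|\nabla u|$, $\Psi$ and $|\nabla\Psi|$ to be in $L^1(e^{-f}dv)$ locally uniformly in time. For this I use the bound from Theorem \ref{t3.8} (exponent $<1$ in $f$), the quadratic growth of $f$, and the polynomial volume growth \cite{CZ}. If needed, a cutoff $\chi(f/L)$ with $L\to\infty$ handles the boundary terms, using $|\nabla f|^2\le f$.

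Since $\eta$ is arbitrary, $u\le\Psi$ for all $t\ge t_0$. Multiplying by $e^{-\frac{f(p)+f(x)}2}$ then yields (\ref{2.17}).
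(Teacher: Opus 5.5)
Your proof is correct. It differs from the paper's only in how the comparison is propagated in time.

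\textbf{Steps 1 and 2 are the paper's argument in another frame.} The paper's $q=H_\phi(p,\cdot,t)e^{\frac12\coth(\delta t)f}$ equals $e^{-f(p)/2}u\,e^{\kappa f}$, a constant multiple of your ratio $u/\Psi$. So your check that $\Psi$ is a supersolution of $\partial_t-\Delta_f$ is the same computation as the paper's check that $q$ is a subsolution of $\partial_t-\Delta+\coth(\delta t)\langle\nabla\cdot,\nabla f\rangle$. Your initial comparison at small $t_0$ is the paper's condition $\sigma\coth\sigma<2\delta$ (with $D>4$ close to $4$). Your sufficient condition $\kappa(t_0)\le\frac{1}{2\delta t_0}$ is a clean way to see it. The extra $e^{-\boldsymbol{\mu}}$ in your display is a slip, since $\gamma(t_0)$ already contains it.

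\textbf{Step 3 is where the routes genuinely differ.} The paper uses the pointwise maximum principle of Lemma \ref{l2.5}. That forces it to verify $q\le Ce^{(1-\varepsilon)f}$ uniformly on $[t_0,T]$. This is why it splits time at $\frac{2}{3\delta}$, using Theorem \ref{t2.3} before and Theorem \ref{t2.1} after, and it is where the restriction $\delta<\frac23$ is used.

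Your duality argument, as in Corollary \ref{c7.4}, needs only $L^2(dv_f)$-type integrability, which is automatic here. For instance, $\int u(\cdot,s)^2\,dv_f=H_f(p,p,2s)$. Also, the $u$-part of $J$ is exactly constant in $s$ by symmetry and the semigroup property. Consequently your argument never uses $\delta<\frac23$: it gives the bound for every $\delta>\frac12$ and all $t\ge t_0(\delta)$. The price is the functional-analytic bookkeeping (domains, cutoffs), which the paper's purely pointwise argument avoids.

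\textbf{The obstacle you describe is not real.} The claim that $e^{\lambda f}$ cannot serve as a pointwise barrier for $\Delta_f$ is incorrect: $(\partial_t-\Delta_f)e^{f+\frac n2 t}=R\,e^{f+\frac n2 t}\ge 0$. Theorem \ref{t3.8} gives the a priori bound $u\le Ce^{f/2}$ on $[t_0,\infty)$. With that bound and this barrier, the usual pointwise maximum principle applied to $u-\Psi$ would work equally well, again without needing $\delta<\frac23$.
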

To prove the above estimate, we need the following maximum principle whose proof is the same as that of Lemma \ref{l7.2} if we use the barrier function $\varphi(x,t)=e^{\frac{n}{2}t+f}$ which satisfies $L\varphi\geq0$.
\begin{lemma}\label{l2.5}
	For any Ricci shrinker $(M^n,g,f)$, $\delta\geq\frac{1}{2}$ and $t\in[t_0,t_1]\subset(0,+\infty)$, let $u(x,t)\leq C e^{\left(1-\varepsilon\right)f(x)}$ satisfy 
	\begin{equation}\label{3.11}
		Lu=\partial_tu-\Delta u+(\tanh\delta t)^{-1}\left\langle\nabla u, \nabla f\right\rangle\leq0,
	\end{equation}
	where $0<\varepsilon\ll 1$ and $C>0$ are some uniform constants. If $u(x,t_0)\leq C_1$ on $M^n$, then $u(x,t)\leq C_1$ for all $x\in M^n$ and $t\in [t_0,t_1]$.
\end{lemma}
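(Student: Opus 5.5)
The proof follows Lemma \ref{l7.2} step by step; the only change is the barrier. With $\delta\geq\frac12$ we take $\varphi(x,t)=e^{\frac n2 t+f(x)}$. Write $\kappa(t):=(\tanh\delta t)^{-1}\geq1$. A direct computation gives
\begin{align*}
L\varphi=\varphi\left[\frac n2-\Delta f-|\nabla f|^2+\kappa(t)|\nabla f|^2\right]=\varphi\left[R+(\kappa(t)-1)|\nabla f|^2\right]\geq0.
\end{align*}
Here we use $\Delta f=\frac n2-R$, $R\geq0$ (which holds on Ricci shrinkers) and $\kappa\geq1$. This inequality is the only structural input needed. Strictly speaking, the time factor $e^{\frac n2 t}$ is not required for $L\varphi\geq 0$, but it is harmless, so we keep it as stated in the paper.

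Next set $v=u-C_1$, so that $Lv=Lu\leq0$ and $v(\cdot,t_0)\leq0$. For a constant $A>0$ let $\bar v=e^{-At}v$. Then
\begin{align*}
L\bar v=-A\bar v+e^{-At}Lv\leq -A\bar v.
\end{align*}
Fix $\epsilon>0$. We will show $\bar v\leq\epsilon\varphi$ on $M^n\times[t_0,t_1]$ and then let $\epsilon\to0$. The growth hypothesis $u\leq Ce^{(1-\varepsilon)f}$ gives
\begin{align*}
\bar v-\epsilon\varphi\leq Ce^{(1-\varepsilon)f}+|C_1|-\epsilon e^{\frac n2 t_0+f}.
\end{align*}
Since $f\to\infty$ at infinity (it grows quadratically, by \cite{HM}), the right-hand side tends to $-\infty$ as $d(p,x)\to\infty$, uniformly for $t\in[t_0,t_1]$. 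So any positivity of $\bar v-\epsilon\varphi$ is confined to a compact set. At $t=t_0$ we have $\bar v-\epsilon\varphi<0$ everywhere.

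Now argue by contradiction. If $\bar v-\epsilon\varphi$ became positive somewhere, then by continuity and compactness there would be a first time $t''\in(t_0,t_1]$ and a point $x''$ with $(\bar v-\epsilon\varphi)(x'',t'')=0$ and $\bar v-\epsilon\varphi\leq0$ on $M^n\times[t_0,t'']$. At $(x'',t'')$ we would have $\partial_t(\bar v-\epsilon\varphi)\geq0$, $\nabla(\bar v-\epsilon\varphi)=0$ and $\Delta(\bar v-\epsilon\varphi)\leq0$, hence $L(\bar v-\epsilon\varphi)\geq0$. On the other hand,
\begin{align*}
L(\bar v-\epsilon\varphi)\leq L\bar v\leq -A\bar v=-A\epsilon\varphi<0,
\end{align*}
which is a contradiction. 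Therefore $\bar v\leq\epsilon\varphi$ for every $\epsilon>0$. Letting $\epsilon\to0$ gives $v\leq0$, that is, $u\leq C_1$ on $M^n\times[t_0,t_1]$.

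The main point to check is that the drift coefficient $(\tanh\delta t)^{-1}$, which is no longer less than $1$, still leaves the barrier a supersolution. Compared with Lemma \ref{l7.2}, the sign of the $|\nabla f|^2$ term is now favorable, so $L\varphi\geq0$ holds without difficulty. The remaining care is only in the localization step: since $\delta t_0>0$ and $t\geq t_0$, the coefficient $\kappa$ is bounded on $[t_0,t_1]$, and the growth gap $e^{-\varepsilon f}$ is what guarantees that the maximum is attained in a compact set.
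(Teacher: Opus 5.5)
Your proposal is correct and is essentially the paper's proof. The paper only says that the argument of Lemma \ref{l7.2} goes through with the barrier $\varphi=e^{\frac n2 t+f}$. You supply the omitted check $L\varphi=\varphi\bigl[R+((\tanh\delta t)^{-1}-1)|\nabla f|^2\bigr]\geq0$ and run the same first-touching-point contradiction; your side remarks that neither the factor $e^{\frac n2 t}$ nor the restriction $\delta\geq\frac12$ is needed for $L\varphi\geq0$ are also correct.
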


Now we are ready to give the proof.
\begin{proof}[\textbf{Proof of Proposition \ref{p3.10}}]
	Let $u(x,t)=H_\phi(p,x,t)$, then $\partial_t u=(\Delta-\phi)u$. Therefore by similar computations as in Lemma \ref{l2.1} and recall $\phi=\frac{1}{4}(f+R-n)$, $R+\left|\nabla f\right|^2=f$ and $\Delta f=\frac{n}{2}-R$ , we compute
	\begin{align*}
		&(\partial_t-\Delta)\left(ue^{\frac{f}{2}(\tanh\delta t)^{-1}}\right)\\
		&= -ue^{\frac{f}{2}(\tanh\delta t)^{-1}}\left[\frac{1}{4}(1-2\delta)\left(1-(\tanh\delta t)^{-2}\right)f+\frac{1}{4}\left(1-(\tanh\delta t)^{-1}\right)^2R+\frac{n}{4}\left((\tanh\delta t)^{-1}-1\right)\right]\\
		&\quad-(\tanh\delta t)^{-1}\left\langle \nabla f,\nabla\left(ue^{\frac{f}{2}(\tanh\delta t)^{-1}}\right)\right\rangle\\
		&\leq -(\tanh\delta t)^{-1}\left\langle \nabla f,\nabla\left(ue^{\frac{f}{2}(\tanh\delta t)^{-1}}\right)\right\rangle.
	\end{align*}
	
	Set
	\begin{align*}
		A(t):=(\tanh \delta t)^{-1}\quad \mbox{and}\quad q(x,t):=u(x,t)e^{\frac12A(t)f(x)},
	\end{align*}
	then by the preceding computations, we have
	\begin{align*}
		\partial_t q-\Delta q+A(t)\langle\nabla q,\nabla f\rangle\le0.
	\end{align*}
	Now fix $\delta\in\left(\frac12,\frac23\right)$.
	Since $\lim_{\sigma\downarrow0}\sigma\coth\sigma=1<2\delta$,
	we can choose $0<\sigma=\sigma(\delta)<\frac23$
	such that $\sigma\coth\sigma<2\delta$.
	Let $t_0:=\frac{\sigma}{\delta}$, then
	\begin{align*}
		t_0<\frac{2}{3\delta},\quad
		A(t_0)=\coth\sigma,\quad\mbox{and}\quad t_0A(t_0)=\frac{\sigma}{\delta}\coth\sigma<2.
	\end{align*}
	Therefore we can choose a constant $D=D(\delta)>4$ sufficiently close to \(4\) such that $	D\,t_0A(t_0)<8$.
	Equivalently,
	\begin{align*}
		\frac1{Dt_0}>\frac{A(t_0)}8.
	\end{align*}
	By Theorem \ref{t2.3}, applied with this \(D>4\), we have
	\begin{align*}
		u(x,t_0)=H_\phi(p,x,t_0)
		\le
		C(n,\delta)e^{-\mu}
		\exp\left[-\frac{d^2(p,x)}{Dt_0}\right].
	\end{align*}
	Since $f(x)\le\frac14\left(d(p,x)+\sqrt{2n}\right)^2$,
	then
	\begin{align*}
		q(x,t_0)=u(x,t_0)e^{\frac12A(t_0)f(x)}\le C(n,\delta)e^{-\mu}\exp\left[
		-\frac{d^2(p,x)}{Dt_0}+\frac{A(t_0)}8\left(d(p,x)+\sqrt{2n}\right)^2\right].
	\end{align*}
	Note that $\frac1{Dt_0}>\frac{A(t_0)}8$,
	then the coefficient of \(d^2(p,x)\) in the exponent is strictly negative. Therefore the exponent is bounded above by a constant depending only on \(n\) and \(\delta\). Consequently, for all $x\in M^n$,
	\begin{align*}
		q(x,t_0)\le C(n,\delta)e^{-\boldsymbol{\mu}}.
	\end{align*}
	It remains to verify the growth assumption in Lemma \ref{l2.5}. Let $	t_*:=\frac{2}{3\delta}$ and
	first consider $t\in[t_0,t_*].$
	By Theorem \ref{t2.3},
	\begin{align*}
		q(x,t)
		\leq
		C(n,\delta)e^{-\mu}
		\exp\left[
		-\frac{d^2(p,x)}{Dt}
		+
		\frac12A(t)f(x)\right].
	\end{align*}
	For every \(\eta>0\), the estimate $f(x)\le\frac14\left(d(p,x)+\sqrt{2n}\right)^2$
	implies
	\begin{align*}
		d^2(p,x)\ge 4(1-\eta)f(x)-C(n,\eta).
	\end{align*}
	Thus
	\begin{align*}
		q(x,t)
		\leq
		C(n,\delta,\eta)e^{-\mu}
		\exp\left[
		\left(
		\frac12A(t)-\frac{4(1-\eta)}{Dt}
		\right)f(x)
		\right].
	\end{align*}
	We claim that \(D>4\) and \(\eta>0\) can be chosen so that (Obviously, this $D$ does not violate the choice of $D$ as above )
	\begin{align*}
		\sup_{t\in[t_0,t_*]}
		\left(
		\frac12A(t)-\frac{4(1-\eta)}{Dt}
		\right)<1.
	\end{align*}
Indeed, let $z=\delta t$, then \(0<z\le\frac23\). When \(D=4\) and \(\eta=0\), we get
	\begin{align*}
		\frac12A(t)-\frac{4}{Dt}= \frac12\coth z-\frac{\delta}{z}.
	\end{align*}
	Since \(\delta>\frac12\),
	\begin{align*}
		\frac12\coth z-\frac{\delta}{z}<\frac12\left(\coth z-\frac1z\right).
	\end{align*}
	For \(z>0\), it's easy to check $\coth z-1/z<z$,
	therefore for \(0<z\le\frac23\),
	\begin{align*}
		\frac12\left(\coth z-\frac1z\right)<\frac13<1.
	\end{align*}
	Choosing \(D>4\) sufficiently close to \(4\), and then setting \(\eta>0\) sufficiently small, the claimed strict inequality remains true. Hence there exists a constant $\varepsilon_1=\varepsilon_1(\delta)>0$
	such that for all \(x\in M\) and \(t\in[t_0,t_*]\),
	\begin{align*}
		q(x,t)\le C(n,\delta)e^{-\mu}e^{(1-\varepsilon_1)f(x)}.
	\end{align*}
	
	Next consider $t\ge t_*$.
	Since $t_*=\frac{2}{3\delta}>1$,
	Theorem \ref{t2.1} gives
	\begin{align*}
		H_\phi(p,x,t)\le C(n)e^{-\boldsymbol{\mu}}.
	\end{align*}
	Therefore
	\begin{align*}
		q(x,t)\leq C(n)e^{-\mu}\exp\left[\frac12A(t)f(x)\right].
	\end{align*}
	Since \(A(t)\) is decreasing in \(t\),
	\begin{align*}
		\frac12A(t)\le \frac12A(t_*)=\frac12\coth\frac23<1.
	\end{align*}
	Thus there exists $\varepsilon_2>0$ such that for all \(x\in M\) and \(t\ge t_*\),
	\begin{align*}
		q(x,t)\le C(n)e^{-\mu}e^{(1-\varepsilon_2)f(x)}
	\end{align*}
	Combining the two time ranges, for every \(T>t_0\), there exist constants $C$ and $\varepsilon:=\min\{\varepsilon_1,\varepsilon_2\}>0$
	such that on $M\times[t_0,T]$,
	\begin{align*}
		q(x,t)\le Ce^{(1-\varepsilon)f(x)}.
	\end{align*}
	Hence \(q\) satisfies all assumptions of Lemma \ref{l2.5} on \(M\times[t_0,T]\). Recall
	\begin{align*}
		\partial_t q-\Delta q+A(t)\langle\nabla q,\nabla f\rangle\le0\quad \mbox{and}\quad q(x,t_0)\le C(n,\delta)e^{-\boldsymbol{\mu}},
	\end{align*}
	Now Lemma \ref{l2.5} gives that for all \(x\in M\) and \(t\in[t_0,T]\),
	\begin{align*}
		q(x,t)\le C(n,\delta)e^{-\mu}
	\end{align*}
	Since \(T>t_0\) is arbitrary, hence for all \(x\in M\) and \(t\ge t_0\), we conclude
	\begin{align*}
		q(x,t)\le C(n,\delta)e^{-\mu}.
	\end{align*}
	Note that $t_0<\frac{2}{3\delta}$, then the desired estimate follows for all $t\ge\frac{2}{3\delta}$.
	This completes the proof .
\end{proof}
Since $f(p)\leq\frac{n}{2}$ and $-(\tanh\delta t)^{-1}+1=-\frac{2e^{-\delta t}}{e^{\delta t}-e^{-\delta t}}=-\frac{2e^{-2\delta t}}{1-e^{-2\delta t}}$, hence by $	H_{f}=H_{\phi} e^{\frac{f(x)+f(y)}{2}}$, we immediately have
\begin{theorem}[\textbf{=Theorem \ref{t1.2'}}]\label{t3.12}
	For any Ricci shrinker $(M^n,g,f)$, $x\in M^n$, $\delta\in\left(\frac{1}{2}, \frac{2}{3}\right)$ and $t\geq \frac{2}{3\delta}$, there exists a constant $C=C(n,\delta)$ such that 
	\begin{equation*}
		H_f(p,x,t)\leq Ce^{\boldsymbol{-\mu}}e^{-\frac{e^{-2\delta t}}{1-e^{-2\delta t}}f(x)}.
	\end{equation*}
	For $0<t\leq\frac{4}{3}$ and $4<D<6$, by Theorem \ref{t2.3}, there exists a constant $C=C(n,D)$ such that 
	\begin{equation*}
		H_f(p,x,t)\leq C\gamma(t)\exp\left(-\frac{d^2(p,x)}{Dt}+\frac{f(x)}{2}\right). 
	\end{equation*}
\end{theorem}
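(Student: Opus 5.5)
The plan is to derive both estimates directly from the Doob transform identity $H_f=H_\phi e^{\frac{f(x)+f(y)}{2}}$ of Section \ref{sec-pe}, with $y=p$. The only extra input is a uniform bound on $f(p)$, and I would establish that first. At the minimum point $p$ we have $\nabla f(p)=0$ and $\Delta f(p)\ge 0$. So (\ref{1.1}) gives $f(p)=R(p)+|\nabla f|^2(p)=R(p)$ and $R(p)=\frac n2-\Delta f(p)\le \frac n2$. Hence $e^{f(p)/2}\le e^{n/4}$, a dimensional constant that can be absorbed into $C$.

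For the long-time estimate, fix $\delta\in(\frac12,\frac23)$ and $t\ge \frac{2}{3\delta}$, and apply Proposition \ref{p3.10}. This yields
\begin{align*}
H_f(p,x,t)=H_\phi(p,x,t)e^{\frac{f(p)+f(x)}{2}}\le C(n,\delta)e^{-\boldsymbol{\mu}}e^{\frac n4}\exp\left(\frac{f(x)}{2}\left(1-(\tanh\delta t)^{-1}\right)\right).
\end{align*}
It then suffices to verify the elementary identity
\begin{align*}
1-\coth\delta t=-\frac{2e^{-\delta t}}{e^{\delta t}-e^{-\delta t}}=-\frac{2e^{-2\delta t}}{1-e^{-2\delta t}}.
\end{align*}
Multiplied by $\frac{f(x)}{2}$, this gives exactly the exponent $-\frac{e^{-2\delta t}}{1-e^{-2\delta t}}f(x)$. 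Nothing beyond Proposition \ref{p3.10} is needed here.

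For the short-time estimate with $0<t\le\frac43$ and $4<D<6$, I would apply Theorem \ref{t2.3} with $\varepsilon=D-4\in(0,2)$. This gives $H_\phi(p,x,t)\le C(n,D)\gamma(t)e^{-d^2(p,x)/(Dt)}$, where $\gamma$ is the function in (\ref{2.6*}). Multiplying by $e^{(f(p)+f(x))/2}\le e^{n/4}e^{f(x)/2}$ yields the stated bound in terms of $\gamma(t)$.

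To reach the form in Theorem \ref{t1.2'}, I would then compare $\gamma(t)$ with $(1-e^{-t})^{-n/2}$ on $(0,\frac43]$. For $t\le1$ we have $\gamma(t)\le C(n)e^{-\boldsymbol{\mu}}t^{-n/2}$. Since $1-e^{-t}\le t$, it follows that $t^{-n/2}\le (1-e^{-t})^{-n/2}$. For $1<t\le\frac43$, $\gamma$ is a constant multiple of $e^{-\boldsymbol{\mu}}$, while $(1-e^{-t})^{-n/2}\ge1$. Hence $\gamma(t)\le C(n)e^{-\boldsymbol{\mu}}(1-e^{-t})^{-n/2}$ throughout $(0,\frac43]$.

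There is no genuine obstacle, since the analytic work is already contained in Proposition \ref{p3.10} and Theorem \ref{t2.3}. The only points needing care are the bound $f(p)\le\frac n2$, which follows from the normalization (\ref{1.3‘}), and keeping track that all constants depend only on $n$ and on $\delta$ or $D$.
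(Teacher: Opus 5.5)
Your proposal is correct and is essentially the paper's argument. The paper likewise uses the Doob transform $H_f=H_\phi e^{\frac{f(p)+f(x)}{2}}$ with $f(p)\le\frac n2$, applies Proposition \ref{p3.10} together with the identity $1-\coth\delta t=-\frac{2e^{-2\delta t}}{1-e^{-2\delta t}}$ for the long-time bound, and uses Theorem \ref{t2.3} (with $\varepsilon=D-4$) for the short-time bound. Your extra check that $\gamma(t)\le C(n)e^{-\boldsymbol{\mu}}(1-e^{-t})^{-n/2}$ on $(0,\frac43]$ is a correct detail the paper leaves implicit when matching the form stated in the introduction.
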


\begin{remark}\label{r3.13}
	For Gaussian shrinker $\left(\mathbb{R}^n, g_{E}, e^{-\frac{x^2}{4}}dx\right)$, $\delta$ can be taken as $\frac{1}{2}$, so the above estimates are almost sharp for short and long time.
\end{remark}

\section{Ricci flow and diffeomorphisms}\label{sec-diff}
In this section, we are going to explain the relation between $H_f$ and spacetime heat kernel $H(x,t;y,s)$ under the self-similar Ricci flow induced by a Ricci shrinker.

Let \((M^n,\bar g,f)\) be a normalized Ricci shrinker satisfying
\begin{align*}
	\overline{Rc}+\overline{\nabla}^2f=\frac12\bar g.
\end{align*}
Let \(\psi_s:M^n\to M^n\) be the family of diffeomorphisms generated by $X=\bar\nabla f$.
Define
\begin{align*}
	\hat g_s:=\psi_s^*\bar g,\quad -\infty<s<\infty,
\end{align*}
and
\begin{align*}
	g_t:=(1-t)\hat g_{\ln\frac1{1-t}}
	=e^{-s}\hat g_s,\quad s=\ln\frac1{1-t}.
\end{align*}
Since
\begin{align*}
	\mathcal L_{\bar\nabla f}\bar g=2\overline{\nabla}^2f
	=\bar g-2\overline{Rc},
\end{align*}
we have
\begin{align*}
	\partial_s\hat g_s
	= \psi_s^*(\mathcal L_{\bar\nabla f}\bar g)
	= \hat g_s-2Rc_{\hat g_s}.
\end{align*}
Moreover,
\begin{align*}
	\frac{ds}{dt}=\frac1{1-t}=e^s.
\end{align*}
Thus
\begin{align*}
	\partial_tg_t
	= -\hat g_s+(1-t)\frac{ds}{dt}\partial_s\hat g_s 
	= -\hat g_s+\hat g_s-2Rc_{\hat g_s} 
	= -2Rc_{g_t}.
\end{align*}
Hence \(\hat g_s\) solves the modified Ricci flow, and \(g_t\) solves the Ricci flow.

\subsection*{Heat and conjugate heat equations under rescaling}

Fix \(x,y\in M^n\) and two time points $s_0<t_1<1$.
Set
\begin{align*}
u(z,t)&:=H(z,t;y,s_0),\quad s_0<t<t_1,\\
w(z,s)&:=H(x,t_1;z,s),\quad s_0<s<t_1.
\end{align*}
Thus
\begin{align*}
	\left(\partial_t-\Delta_{g_t}\right)u = 0,\quad
	\left(-\partial_s-\Delta_{g_s}+R_{g_s}\right)w = 0.
\end{align*}
For every intermediate time \(r\in(s_0,t_1)\), the semigroup property
gives
\begin{align*}
	\int_{M^n} u(z,r)w(z,r)\,dv_{g_r}(z)= H(x,t_1;y,s_0).
\end{align*}
Moreover, stochastic completeness of the conjugate heat kernel gives
\begin{align*}
	\int_{M^n} w(z,s)\,dv_{g_s}(z)=1,\quad s_0<s<t_1.
\end{align*}
Let $\sigma_0:=\ln\frac{1}{1-s_0}$ and $\sigma_1:=\ln\frac{1}{1-t_1}$.
For an intermediate time \(r\in(s_0,t_1)\), write
\begin{align*}
	\sigma:=\ln\frac{1}{1-r},\qquad
	r=1-e^{-\sigma},\quad \sigma_0<\sigma<\sigma_1,
\end{align*}
and define
\begin{align*}
	\widetilde u(z,\sigma) := u\left(z,1-e^{-\sigma}\right),\quad
	\widetilde w(z,\sigma) := w\left(z,1-e^{-\sigma}\right).
\end{align*}
Recall that
\begin{align*}
	g_r=e^{-\sigma}\widehat g_\sigma,\quad
	\Delta_{g_r}=e^\sigma\Delta_{\widehat g_\sigma},\quad
	R_{g_r}=e^\sigma R_{\widehat g_\sigma},\quad
	\frac{dr}{d\sigma}=e^{-\sigma}.
\end{align*}
Therefore,
\begin{align*}
	\partial_\sigma\widetilde u= e^{-\sigma}\partial_tu= e^{-\sigma}\Delta_{g_r}u= \Delta_{\widehat g_\sigma}\widetilde u,
\end{align*}
and hence
\begin{align*}
	\left(\partial_\sigma-\Delta_{\widehat g_\sigma}\right)\widetilde u=0.
\end{align*}

For the conjugate heat kernel, we similarly have
\begin{align*}
	\partial_\sigma\widetilde w= e^{-\sigma}\partial_sw= e^{-\sigma}\left(-\Delta_{g_r}w+R_{g_r}w\right)= -\Delta_{\widehat g_\sigma}\widetilde w + R_{\widehat g_\sigma}\widetilde w.
\end{align*}
It is convenient to introduce the backward logarithmic time $\theta:=\sigma_1-\sigma$.
Thus \(\theta\downarrow0\) corresponds to \(r\uparrow t_1\), and $	\partial_\theta=-\partial_\sigma$.
Regarding \(\widetilde w\) as a function of \((z,\theta)\), we obtain
\begin{align*}
	\left(\partial_\theta-\Delta_{\widehat g_\sigma}+R_{\widehat g_\sigma}\right)\widetilde w=0,\quad
	\sigma=\sigma_1-\theta.
\end{align*}
To preserve the heat/conjugate-heat pairing. We therefore define for $\sigma=\sigma_1-\theta$ that
\begin{align*}
	\widehat u(z,\sigma):=\widetilde u(z,\sigma),\quad
	\widehat w(z,\theta) := e^{-\frac n2\sigma}\widetilde w(z,\sigma),\quad .
\end{align*}
Then
\begin{align*}
	\left(\partial_\sigma-\Delta_{\widehat g_\sigma}\right)\widehat u=0.
\end{align*}
Furthermore,
\begin{align*}
	\partial_\theta\widehat w
	= \frac n2e^{-\frac n2\sigma}\widetilde w + e^{-\frac n2\sigma}\partial_\theta\widetilde w
	= \Delta_{\widehat g_\sigma}\widehat w - R_{\widehat g_\sigma}\widehat w + \frac n2\widehat w,
\end{align*}
and hence
\begin{align*}
	\left(\partial_\theta-\Delta_{\widehat g_\sigma}+R_{\widehat g_\sigma}-\frac n2\right)\widehat w=0.
\end{align*}
The rescaling factors now cancel exactly. Indeed, at the intermediate
time \(r=1-e^{-\sigma}\),
\begin{align*}
	\int_{M^n} \widehat u(z,\sigma)\widehat w(z,\theta)\, dv_{\widehat g_\sigma}(z)
	&= \int_{M^n} \widetilde u(z,\sigma)\left(e^{-\frac n2\sigma}\widetilde w(z,\sigma)\right) e^{\frac n2\sigma}\,dv_{g_r}(z)\\
	&= \int_{M^n} u(z,r)w(z,r)\,dv_{g_r}(z)\\
	&= H(x,t_1;y,s_0).
\end{align*}
Similarly,
\begin{align*}
	\int_{M^n} \widehat w(z,\theta)\, dv_{\widehat g_\sigma}(z)
	&= \int_{M^n} e^{-\frac n2\sigma}\widetilde w(z,\sigma) e^{\frac n2\sigma}\,dv_{g_r}(z)\\
	&= \int_{M^n} w(z,r)\,dv_{g_r}(z)\\
	&= 1.
\end{align*}

\subsection*{Pullback to the fixed shrinker metric \(\bar g\)}

Recall that $\widehat g_\sigma=\psi_\sigma^*\bar g$ and $\bar g=\left(\psi_\sigma^{-1}\right)^*\widehat g_\sigma$.
Define for $\sigma=\sigma_1-\theta$ that
\begin{align*}
	\bar u(z,\sigma) = \widehat u\left(\psi_\sigma^{-1}(z),\sigma\right),\quad
	\bar w(z,\theta) = \widehat w\left(\psi_\sigma^{-1}(z),\theta\right).
\end{align*}
Put $q:=\psi_\sigma^{-1}(z)$,
and denote by \(\widehat X\) the vector field on \((M^n,\widehat g_\sigma)\) corresponding to $	X=\bar\nabla f$
under the diffeomorphism \(\psi_\sigma\). Since \(\psi_\sigma\) is generated by \(X\), we have
\begin{align*}
	\partial_\sigma\bar u(z,\sigma)
	&= \partial_\sigma\widehat u(q,\sigma) - \left\langle \widehat\nabla\widehat u(q,\sigma), \widehat X(q,\sigma) \right\rangle_{\widehat g_\sigma}\\
	&= \Delta_{\widehat g_\sigma}\widehat u(q,\sigma) - \left\langle \widehat\nabla\widehat u(q,\sigma), \widehat X(q,\sigma) \right\rangle_{\widehat g_\sigma}\\
	&= \bar\Delta\bar u(z,\sigma) - \left\langle \bar\nabla f(z), \bar\nabla\bar u(z,\sigma) \right\rangle_{\bar g}.
\end{align*}
Therefore
\begin{align*}
	\left(\partial_\sigma-\bar\Delta_f\right)\bar u=0.
\end{align*}

For \(\bar w\), it is useful first to regard it as a function of \(\sigma\). The equation for \(\widehat w\) is equivalently
\begin{align*}
	\partial_\sigma\widehat w = -\Delta_{\widehat g_\sigma}\widehat w + R_{\widehat g_\sigma}\widehat w - \frac n2\widehat w.
\end{align*}
Applying the same pullback calculation gives
\begin{align*}
	\partial_\sigma\bar w
	&= -\bar\Delta\bar w - \left\langle \bar\nabla f,\bar\nabla\bar w \right\rangle_{\bar g} + \left(\bar R-\frac n2\right)\bar w.
\end{align*}
Since \(\theta=\sigma_1-\sigma\), thus $\partial_\theta=-\partial_\sigma$.
and hence
\begin{align*}
	\partial_\theta\bar w
	= \bar\Delta\bar w + \left\langle \bar\nabla f,\bar\nabla\bar w \right\rangle_{\bar g} + \left(\frac n2-\bar R\right)\bar w.
\end{align*}
Equivalently, for $\bar\Delta_{-f} := \bar\Delta + \left\langle \bar\nabla f,\bar\nabla\cdot \right\rangle_{\bar g}$, we have
\begin{align*}
	\left(\partial_\theta-\bar\Delta_{-f}+\bar R-\frac n2\right)\bar w=0,
\end{align*}

The heat/conjugate-heat pairing is preserved under this pullback. In fact, recall $\theta=\sigma_1-\sigma$, a direct change of variables gives
\begin{align*}
	\int_{M^n} \bar u(z,\sigma)\bar w(z,\sigma_1-\sigma)\,d\bar v(z)
	= \int_{M^n} \widehat u(q,\sigma)\widehat w(q,\theta)\, dv_{\widehat g_\sigma}(q)= H(x,t_1;y,s_0).
\end{align*}
Similarly,
\begin{align*}
	\int_{M^n} \bar w(z,\theta)\,d\bar v(z)
	= \int_{M^n} \widehat w(q,\theta)\, dv_{\widehat g_\sigma}(q)= 1.
\end{align*}
Set $\bar x:=\psi_{\sigma_1}(x)$,
the terminal Dirac normalization of the original conjugate heat kernel is also preserved by the rescaling and the pullback. More precisely, for every \(\eta\in C_0^\infty(M^n)\),
\begin{align*}
	\int_{M^n} \eta(z)\bar w(z,\theta)\,d\bar v(z)
	&= \int_{M^n} \eta\left(\psi_\sigma(q)\right) w\left(q,1-e^{-\sigma}\right)\, dv_{g_{1-e^{-\sigma}}}(q).
\end{align*}
As \(\theta\downarrow0\), we have $\sigma\uparrow\sigma_1$ and $1-e^{-\sigma}\uparrow t_1$.
Therefore
\begin{align*}
	\lim_{\theta\downarrow0} \int_{M^n} \eta(z)\bar w(z,\theta)\,d\bar v(z)
	= \eta\left(\psi_{\sigma_1}(x)\right)
	= \eta(\bar x).
\end{align*}
In other words,
\begin{align*}
	\bar w(\cdot,\theta)\,d\bar v \longrightarrow \delta_{\bar x}\quad \text{as}\quad\theta\downarrow0.
\end{align*}

\subsection*{The relation with the weighted heat kernel \(H_f\)}

Define $h(z,\theta):=e^{f(z)}\bar w(z,\theta)$.
We claim that \(h\) solves the \(f\)-heat equation $\left(\partial_\theta-\bar\Delta_f\right)h=0$.

First, we have $\partial_\theta h = e^f\partial_\theta\bar w$.
On the other hand, we compute
\begin{align*}
	\bar\Delta_f\left(e^f\bar w\right)
	&= \bar\Delta\left(e^f\bar w\right) - \left\langle \bar\nabla f,\bar\nabla\left(e^f\bar w\right) \right\rangle_{\bar g}\\
	&= e^f\left[\bar\Delta\bar w + 2\left\langle \bar\nabla f,\bar\nabla\bar w \right\rangle_{\bar g}
	+ \bar w\bar\Delta f + \bar w|\bar\nabla f|_{\bar g}^2 \right]- e^f\left[\left\langle \bar\nabla f,\bar\nabla\bar w \right\rangle_{\bar g} + \bar w|\bar\nabla f|_{\bar g}^2\right]\\
	&= e^f\left[\bar\Delta\bar w + \left\langle \bar\nabla f,\bar\nabla\bar w \right\rangle_{\bar g} + \bar w\bar\Delta f\right].
\end{align*}
By
\begin{align*}
	\bar\Delta f+\bar R=\frac n2,\quad	\partial_\theta\bar w
	= \bar\Delta\bar w + \left\langle \bar\nabla f,\bar\nabla\bar w \right\rangle_{\bar g} + \left(\frac n2-\bar R\right)\bar w,
\end{align*}
we obtain
\begin{align*}
	\bar\Delta_f\left(e^f\bar w\right)
	= e^f\left[\bar\Delta\bar w + \left\langle \bar\nabla f,\bar\nabla\bar w \right\rangle_{\bar g} + \left(\frac n2-\bar R\right)\bar w \right]= e^f\partial_\theta\bar w= \partial_\theta h.
\end{align*}
Thus $	\left(\partial_\theta-\bar\Delta_f\right)h=0$.
Moreover, note that  $h(z,\theta)e^{-f(z)}\,d\bar v(z) = \bar w(z,\theta)\,d\bar v(z)$,
so
\begin{align*}
	\int_{M^n} h(z,\theta)e^{-f(z)}\,d\bar v(z) = 1
\end{align*}
and, for every \(\eta\in C_0^\infty(M^n)\),
\begin{align*}
	\lim_{\theta\downarrow0} \int_{M^n} \eta(z)h(z,\theta)e^{-f(z)}\,d\bar v(z)
	= \lim_{\theta\downarrow0} \int_{M^n} \eta(z)\bar w(z,\theta)\,d\bar v(z)= \eta(\bar x).
\end{align*}
Therefore \(h\) is the weighted heat kernel with pole at \(\bar x\) at time \(\theta=0\). 
Since $H_f(\bar x,\cdot,\theta)$ is the minimal nonnegative fundamental solution, we have $H_f(\bar x,\cdot,\theta)\leq h(\cdot,\theta)$. Both sides have unit mass with respect to $e^{-f}dv_{\bar g}$,
and hence they agree identically. By symmetry, we conclude
\begin{align*}
	h(z,\theta) = H_f(z,\bar x,\theta) = H_f(\bar x,z,\theta).
\end{align*}
Since \(h=e^f\bar w\), it follows that for $\bar x=\psi_{\sigma_1}(x)$, $\bar w(z,\theta) = H_f(\bar x,z,\theta)e^{-f(z)}$.

We now evaluate this identity at the initial time \(s_0\). Put $\theta_0:=\sigma_1-\sigma_0 = \ln\frac{1-s_0}{1-t_1}$.
By the definitions of \(\bar w\) and \(\widehat w\),
\begin{align*}
	&\bar w\left(\psi_{\sigma_0}(y),\theta_0\right)\\
	&= \widehat w(y,\theta_0)= e^{-\frac n2\sigma_0}\widetilde w(y,\sigma_0)= e^{-\frac n2\sigma_0}w(y,s_0)= e^{-\frac n2\sigma_0}H(x,t_1;y,s_0).
\end{align*}
On the other hand, the preceding identification with the weighted heat kernel gives
\begin{align*}
	\bar w\left(\psi_{\sigma_0}(y),\theta_0\right)
	= H_f\left( \psi_{\sigma_1}(x), \psi_{\sigma_0}(y), \sigma_1-\sigma_0 \right)
	\exp\left[ -f\left(\psi_{\sigma_0}(y)\right) \right].
\end{align*}
Combining these two identities, we obtain
\begin{align*}
	H(x,t_1;y,s_0)
	&= e^{\frac n2\sigma_0} H_f\left( \psi_{\sigma_1}(x), \psi_{\sigma_0}(y), \sigma_1-\sigma_0 \right)
	\exp\left[ -f\left(\psi_{\sigma_0}(y)\right) \right].
\end{align*}
Since
\begin{align*}
	e^{\sigma_0}=\frac{1}{1-s_0},\quad
	\sigma_1=\ln\frac{1}{1-t_1},\quad
	\sigma_0=\ln\frac{1}{1-s_0},
\end{align*}
we conclude that
\begin{align*}
	H(x,t_1;y,s_0)
	&= (1-s_0)^{-\frac n2}
	H_f\left( \psi_{\ln\frac{1}{1-t_1}}(x), \psi_{\ln\frac{1}{1-s_0}}(y), \ln\frac{1-s_0}{1-t_1} \right)\exp\left[ -f\left( \psi_{\ln\frac{1}{1-s_0}}(y) \right) \right].
\end{align*}

To sum up, we have the following proposition.
\begin{proposition}\label{p6.1}
	Let $(M^n,g(t))_{t<1}$ be the ancient Ricci flow associated with a Ricci shrinker $(M^n,g,f)$ and $H(x,t;y,s)_{s<t<1}$ be the spacetime heat kernel. Denote the family of diffeomorphisms generated by $\nabla f$ as $\psi_\theta, \theta\in(-\infty,+\infty)$, then
	\begin{align*}
		H(x,t;y,s)=(1-s)^{-n/2}H_f\left(\psi_{\ln\frac1{1-t}}(x), \psi_{\ln\frac1{1-s}}(y), \ln\frac{1-s}{1-t}\right)\exp\left[-f\left(\psi_{\ln\frac1{1-s}}(y)\right)\right].
	\end{align*}
	Conversely, for any $\tau\in(-\infty,+\infty)$ and $t>0$, 
	\begin{align*}
		H_f(x,y,t)e^{-f(y)}=e^{-n\tau/2}H\left(\psi_{\tau+t}^{-1}(x),1-e^{-(\tau+t)}; \psi_\tau^{-1}(y),1-e^{-\tau}\right).
	\end{align*}
\end{proposition}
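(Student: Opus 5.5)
The first identity has essentially been established in the computation preceding the statement, so the plan is to organize that computation into a proof and then derive the converse by inverting the change of variables.

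\textbf{Forward identity.} Fix $s<t<1$ and $x\in M^n$, and set $w(z,r)=H(x,t;z,r)$. With $\sigma=\ln\frac{1}{1-r}$, I rescale and put $\widehat w=e^{-\frac n2\sigma}w$ on $(M^n,\widehat g_\sigma)$. I then pull back by $\psi_\sigma^{-1}$ to get $\bar w$ on the fixed shrinker $(M^n,\bar g)$, using the backward time $\theta=\sigma_1-\sigma$ with $\sigma_1=\ln\frac1{1-t}$. The preceding computation shows that $h=e^{f}\bar w$ has the following properties:
\begin{itemize}
\item it solves $(\partial_\theta-\bar\Delta_f)h=0$;
\item it has unit mass with respect to $e^{-f}d\bar v$;
\item it converges to $\delta_{\bar x}$ as $\theta\downarrow 0$, where $\bar x=\psi_{\sigma_1}(x)$.
\end{itemize}

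The key step is uniqueness. The weighted heat kernel $H_f(\bar x,\cdot,\theta)$ is the minimal positive fundamental solution, so $H_f(\bar x,\cdot,\theta)\le h(\cdot,\theta)$. The difference $h-H_f(\bar x,\cdot,\theta)$ is nonnegative and has zero integral against $e^{-f}d\bar v$, because both functions have unit mass. Here the stochastic completeness of $H_f$ (guaranteed since $V_f<\infty$) and the stochastic completeness of the conjugate heat kernel are both used. Hence the difference vanishes identically, and $h=H_f(\bar x,\cdot,\theta)$. This is the main point needing care: minimality has to apply to $h$. That holds because $h$ is a nonnegative solution with the correct initial data, and the minimal solution is obtained from Dirichlet exhaustions, each of which lies below any such nonnegative solution by the parabolic maximum principle on compact domains.

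Evaluating at $\theta_0=\sigma_1-\sigma_0$ and at the point $\psi_{\sigma_0}(y)$, and unwinding the definitions $\bar w(\psi_{\sigma_0}(y),\theta_0)=e^{-\frac n2\sigma_0}H(x,t;y,s)$, gives the first formula, using $e^{\frac n2\sigma_0}=(1-s)^{-n/2}$ and the symmetry of $H_f$.

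\textbf{Converse.} Given $\tau\in\mathbb R$ and $t>0$, I choose $s_0=1-e^{-\tau}$ and $t_1=1-e^{-(\tau+t)}$. Then $\sigma_0=\tau$, $\sigma_1=\tau+t$, and $\ln\frac{1-s_0}{1-t_1}=t$. I substitute $x\mapsto\psi_{\tau+t}^{-1}(x)$ and $y\mapsto\psi_\tau^{-1}(y)$ into the first formula. The diffeomorphisms then cancel, and $(1-s_0)^{-n/2}=e^{n\tau/2}$. Multiplying both sides by $e^{-n\tau/2}$ yields $H_f(x,y,t)e^{-f(y)}=e^{-n\tau/2}H(\psi^{-1}_{\tau+t}(x),1-e^{-(\tau+t)};\psi_\tau^{-1}(y),1-e^{-\tau})$.

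Besides the uniqueness step, the only other delicate point is the sign convention in the pullback computation, namely that $\partial_\sigma\bar u=\partial_\sigma\widehat u-\langle\widehat\nabla\widehat u,\widehat X\rangle$. I would double-check it using the Gaussian shrinker, where both kernels are explicit.
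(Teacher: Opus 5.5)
Your proposal is correct and follows the paper's own argument. You rescale the conjugate heat kernel by $e^{-\frac n2\sigma}$, pull back by $\psi_\sigma^{-1}$, show that $h=e^{f}\bar w$ is a unit-mass solution of the $f$-heat equation converging to $\delta_{\bar x}$, identify it with $H_f(\bar x,\cdot,\theta)$ via minimality plus equal mass, and evaluate at $(\psi_{\sigma_0}(y),\theta_0)$; the converse is the substitution $s=1-e^{-\tau}$, $t\mapsto 1-e^{-(\tau+t)}$ with the diffeomorphisms inverted, which the paper leaves implicit.
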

Especially, by \cite[Theorem 15]{LW}, we have
\begin{proposition}\label{p6.2}
	For any Ricci shrinker $(M^n,g,f)$, $x,y\in M^n$, and $t>0$,
	\begin{align*}
		H_f(x,y,t)e^{-f(y)}\leq\frac{e^{-\boldsymbol{\mu}}}
		{\left(4\pi\left(1-e^{-t}\right)\right)^{\frac n2}}.
	\end{align*}
	By the symmetry of $H_f$, we also have
	\begin{align*}
		H_f(y,x,t)e^{-f(y)}\leq\frac{e^{-\boldsymbol{\mu}}}
		{\left(4\pi\left(1-e^{-t}\right)\right)^{\frac n2}}.
	\end{align*}
\end{proposition}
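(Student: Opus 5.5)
The plan is to read Proposition \ref{p6.2} off the converse identity in Proposition \ref{p6.1}, combined with the on-diagonal upper bound for the spacetime heat kernel in \cite[Theorem 15]{LW}. That bound says that for the self-similar Ricci flow $(M^n,g(t))_{t<1}$ induced by the shrinker, one has $H(x,t;y,s)\le e^{-\boldsymbol{\mu}}\left(4\pi(t-s)\right)^{-n/2}$ for all $x,y\in M^n$ and $s<t<1$. The only work is to check that the scaling factors cancel exactly.

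First fix $x,y\in M^n$ and $t>0$, and choose any $\tau\in(-\infty,+\infty)$. Proposition \ref{p6.1} gives
\begin{align*}
H_f(x,y,t)e^{-f(y)}=e^{-n\tau/2}H\left(\psi_{\tau+t}^{-1}(x),1-e^{-(\tau+t)};\psi_\tau^{-1}(y),1-e^{-\tau}\right).
\end{align*}
The two Ricci flow times are $t_1=1-e^{-(\tau+t)}$ and $s_0=1-e^{-\tau}$. They satisfy $s_0<t_1<1$, so the kernel on the right is defined, and
\begin{align*}
t_1-s_0=e^{-\tau}\left(1-e^{-t}\right).
\end{align*}

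Next apply \cite[Theorem 15]{LW} to the right-hand side. This gives
\begin{align*}
H_f(x,y,t)e^{-f(y)}\le e^{-n\tau/2}\,\frac{e^{-\boldsymbol{\mu}}}{\left(4\pi e^{-\tau}(1-e^{-t})\right)^{n/2}}=\frac{e^{-\boldsymbol{\mu}}}{\left(4\pi(1-e^{-t})\right)^{n/2}}.
\end{align*}
The factor $e^{-n\tau/2}$ cancels exactly against $(e^{-\tau})^{-n/2}$. So the bound is independent of $\tau$, and any choice, for instance $\tau=0$, suffices. The second inequality in the statement then follows from the symmetry $H_f(x,y,t)=H_f(y,x,t)$, which holds because $\Delta_f$ is self-adjoint with respect to $e^{-f}dv$.

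The main point to verify is that the normalization in \cite[Theorem 15]{LW} matches the one used here. Specifically, that theorem should be stated for the flow $g(t)=(1-t)\psi^*_{\ln\frac{1}{1-t}}g$ with the same normalization (\ref{1.3‘})--(\ref{1.4‘}) of $f$ and $\boldsymbol{\mu}$, so that its constant is exactly $e^{-\boldsymbol{\mu}}(4\pi(t-s))^{-n/2}$ with no extra dimensional factor. I would also check that the pullback by $\psi$ in Proposition \ref{p6.1} does not alter the pointwise values of the kernel. It should not, because \cite[Theorem 15]{LW} is uniform in the spatial points, so the diffeomorphisms $\psi^{-1}_{\tau+t}$ and $\psi^{-1}_\tau$ play no role in the estimate.
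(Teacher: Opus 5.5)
Your proposal is correct and is essentially the paper's argument: the paper obtains Proposition \ref{p6.2} in one line by combining the converse identity of Proposition \ref{p6.1} with the spacetime bound $H(x,t;y,s)\le e^{-\boldsymbol{\mu}}(4\pi(t-s))^{-n/2}$ from \cite[Theorem 15]{LW}. Your computation $t_1-s_0=e^{-\tau}(1-e^{-t})$ supplies the scaling check that the paper leaves implicit, namely the cancellation of $e^{-n\tau/2}$, and the second inequality is indeed just the symmetry of $H_f$.
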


\section{Nash entropy}\label{sec-N}
Throughout this section, unless otherwise specified, all spatial derivatives of $H_f(x,y,t)$, $h_f(x,y,t)$, and $b_f(x,y,t)$ are taken with respect to the $y$-variable.
Let $g$ be the fixed shrinker metric and $H_f(x,y,t)=\frac{1}{\zeta(t)}e^{-h_f(x,y,t)}$. For Ricci shrinkers, we set $\zeta(t)=\left[4\pi\left(1-e^{-t} \right) \right]^{\frac{n}{2}}$. Formally, in light of the probability measure $dv_{H_f}(y)=H_f(x,y,t)e^{-f(y)}dv(y)=\frac{1}{\zeta(t)}e^{-h_f(x,y,t)-f(y)}dv(y)$, define
the weighted Nash entropy $\mathcal{N}_f(x,t)$ at $(x,t)$ by 
\begin{align*}
	\mathcal{N}_f(x,t)&=\int_{M^n}\left[f(y)+h_f(x,y,t)\right]H_f(x,y,t)e^{-f(y)}dv(y)-\frac{n}{2}\\
	&=\int_{M^n}\left[f(y)-\ln H_f(x,y,t)-\ln \zeta(t)\right]dv_{H_f}(y)-\frac{n}{2}.
\end{align*}
In fact, by our previous arguments of heat kernel, this definition is equivalent to that of the Nash entropy under Ricci flow. Let $H(x,t;y,s)$ be the Ricci flow conjugate heat kernel starting from $(x,t)$, $s<t<1$. Set
\begin{align*}
	\sigma_s:=\ln\frac1{1-s},\quad\sigma_t:=\ln\frac1{1-t},	\quad\theta:=\sigma_t-\sigma_s
	= \ln\frac{1-s}{1-t},\quad z:=\psi_{\sigma_s}(y),\quad w:=\psi_{\sigma_t}(x).
\end{align*}
As before, $\psi_\sigma$ is generated by $X=\nabla f$, and the associated flow is
\begin{align*}
	\hat g_\sigma=\psi_\sigma^*\bar g,\quad
	g_t=(1-t)\hat g_{\ln\frac1{1-t}}.
\end{align*}
By proposition \ref{p6.1}, we have
\begin{align*}
	H(x,t;y,s)=(1-s)^{-n/2}e^{-f(z)}H_f(z,w,\theta).
\end{align*}
Since $H_f$ is symmetric, we may write $H_f(z,w,\theta)=H_f(w,z,\theta)$. By $	t-s=(1-s)\left(1-e^{-\theta}\right)$, 
\begin{align*}
	H(x,t;y,s)&=(1-s)^{-n/2}e^{-f(z)}
	\left(4\pi\left(1-e^{-\theta}\right)\right)^{-n/2}
	e^{-h_f(w,z,\theta)}\\
	&=
	\left(4\pi(t-s)\right)^{-n/2}
	\exp\left[-\left(f(z)+h_f(w,z,\theta)\right)\right],
\end{align*}
Thus, if the Ricci flow conjugate heat kernel is written as
\begin{align*}
	H(x,t;y,s)=(4\pi(t-s))^{-n/2}e^{-b_{x,t}(y,s)},
\end{align*}
then
\begin{align*}
	b_{x,t}(y,s)=
	f\left(\psi_{\ln\frac1{1-s}}(y)\right)
	+
	h_f\left(
	\psi_{\ln\frac1{1-t}}(x),
	\psi_{\ln\frac1{1-s}}(y),
	\ln\frac{1-s}{1-t}
	\right).
\end{align*}
Moreover, as we discussed previously, the corresponding heat kernel measures satisfy
\begin{align*}
	H(x,t;y,s)dv_{g_s}(y)=H_f(w,z,\theta)e^{-f(z)}dv(z).
\end{align*}

Recall the definition of pointed Nash entropy and $\mathcal{W}$ entropy under Ricci flow:
\begin{align*}
	\mathcal N_{(x,t)}(t-s):&=
	\int_{M^n} b_{x,t}(y,s)H(x,t;y,s)dv_{g_s}(y)-\frac n2,\\
	\mathcal W_{(x,t)}(t-s):&=
	\int_{M^n} \left[(t-s)\left(\left|\nabla_yb_{x,t}\right|_{g_s}^2+R_{g_s}\right)+b_{x,t}-n\right]H(x,t;y,s)dv_{g_s}(y).
\end{align*}
Then, changing variables $z=\psi_{\sigma_s}(y)$, we conclude
\begin{align*}
	\mathcal N_{(x,t)}(t-s)
	&=\int_{M^n}\left(f(z)+h_f(w,z,\theta)\right)H_f(w,z,\theta)e^{-f(z)}dv(z)-\frac n2=\mathcal N_f(w,\theta).
\end{align*}
Therefore the direct entropy relation is
\begin{align}\label{7.1*}
	\mathcal N_{(x,t)}(t-s)=\mathcal N_f\left(\psi_{\ln\frac1{1-t}}(x),\ln\frac{1-s}{1-t}\right).
\end{align}
Conversely, fix any $a\in\mathbb R$ and $\theta>0$.
By similar arguments, we have
\begin{align*}
	\mathcal	N_f(x,\theta)=\mathcal N_{\left(\psi_{a+\theta}^{-1}(x),\,1-e^{-(a+\theta)}\right)}
	\left(e^{-a}\left(1-e^{-\theta}\right)\right).
\end{align*}

Next we turn to Perelman's $\mathcal{W}$ entropy. For some smooth function $\phi$ and $\tau>0$, let $u^2=\frac{e^{-\phi}}{(4\pi\tau)^{n/2}}$ and $\int_{M^n}u^2dv=1$,
define
$$\mathcal{W}(g,u,\tau)=\int_{M^n}\tau\left(4|\nabla u|^2+Ru^2\right)-u^2\ln u^2dv-\left(n+\frac n2\ln(4\pi\tau)\right).$$ 
For a general Ricci shrinker $(M^n,g,f)$, we define the $\boldsymbol{\mu}$ functional as $$\boldsymbol{\mu}(g,\tau)=\inf\left\{\mathcal{W}(g,u,\tau)\left|u\in W_*^{1,2}(M)\right.\right\},$$
where 
$$W_*^{1,2}(M^n)=\left\{u\left|\int_{M^n}\left|\nabla u\right| ^2dv<\infty,\int_{M^n} u^2dv=1\hspace*{0.3em}\mathrm{and}\int_{M^n} d^2(p,\cdot)u^2dv<\infty\right\}\right..$$
For more details, especially the validity of these entropies on Ricci shrinkers, see \cite{LW,LW2}.

Now set $u^2(y)=H_f(x,y,t)e^{-f(y)}$, since $\left|\nabla f\right|^2+R=f$, then
\begin{align*}
	&\mathcal{W}(g,u,\tau)\\
	&=\int_{M^n}\tau\left(\frac{\left|\nabla H_f \right|^2}{H_f^2}-f\right)dv_{H_f}(y)+	\mathcal N_f(x,t)+\frac{n}{2}\ln\left(4\pi\left(1-e^{-t}\right)\right)+n\tau -\left(\frac{n}{2}+\frac n2\ln(4\pi\tau)\right)\\
	&=\int_{M^n}\tau\left(\frac{\left|\nabla H_f \right|^2}{H_f^2}-f\right)dv_{H_f}(y)+	\mathcal N_f(x,t)+\frac{n}{2}\ln\left(1-e^{-t}\right)+n\tau -\left(\frac{n}{2}+\frac n2\ln(\tau)\right).
\end{align*}
On the other hand, since $\Delta_ff=\frac{n}{2}-f$, it follows from integration by parts that
\begin{align*}
	&\partial_t\mathcal{N}_f(x,t)\nonumber\\
	&=\partial_t\int_{M^n}\left[f(y)-\ln H_f(x,y,t)-\ln \zeta(t)\right]dv_{H_f}(y)\nonumber\\
	&=\int_{M^n}\left[-\frac{\partial_tH_f}{H_f}-\frac{\partial_t\zeta(t)}{\zeta(t)}\right]dv_{H_f}(y)+\int_{M^n}\left[f(y)-\ln H_f-\ln \zeta(t)\right]\partial_tH_fe^{-f(y)}dv(y)\nonumber\\
	&=\int_{M^n}\left[-\frac{\Delta_{f,y}H_f}{H_f}-\frac{\partial_t\zeta(t)}{\zeta(t)}\right]H_fe^{-f(y)}dv(y)\nonumber\\
	&\quad+\int_{M^n}\left[f(y)-\ln H_f-\ln \zeta(t)\right]\Delta_{f,y}H_fe^{-f(y)}dv(y)\nonumber\\
	&=\int_{M^n}\left[-\frac{n}{2}\left(\frac{1}{e^t-1}\right)\right]H_fe^{-f(y)}dv(y)+\int_{M^n}\left[\Delta_{f,y}f(y)+\frac{\left|\nabla H_f \right|^2}{H_f^2}\right]H_fe^{-f(y)}dv(y)\nonumber\\
	&=\int_{M^n}\left[\frac{\left|\nabla H_f \right|^2}{H_f^2}-f\right]dv_{H_f}(y)+\frac{n}{2}-\frac{n}{2}\left(\frac{1}{e^t-1}\right).
\end{align*}
Therefore, if we set $\tau(t)=1-e^{-t}$, we have $\mathcal{W}\left(g,\sqrt{H_f}e^{-\frac{f}{2}}, 1-e^{-t}\right)=\tau\partial_t\mathcal{N}_f+\mathcal{N}_f$. So it's natural to define the weighted $\mathcal{W}$ entropy $\mathcal{W}_f(x,t)$ as
$$\mathcal{W}_f(x,t)=\mathcal{W}\left(g,\sqrt{H_f}e^{-\frac{f}{2}}, 1-e^{-t}\right).$$
Following Perelman \cite{P}, we define the differential Harnack quantity
$$w_f=\tau\left(2\Delta b-\left|\nabla b\right|^2+R\right)+b-n,$$
where $\tau=1-e^{-t}$ and $\frac{e^{-b_f}}{\left(4\pi\tau\right)^{\frac{n}{2}}}=H_fe^{-f}$, i.e. $b_f=f-\ln H_f-\frac{n}{2}\ln4\pi\tau$. Moreover, by shrinker equation (\ref{1.1}), 
\begin{align*}
	2\Delta b_f=2\left(\Delta f-\Delta\ln H_f\right)=2\left(\frac{n}{2}-R-\frac{\Delta H_f}{H_f}+\frac{\left|\nabla H_f \right|^2}{H_f^2}\right),\\
	\left|\nabla b_f\right|^2=\left|\nabla f-\nabla\ln H_f\right|^2=\left|\nabla f\right|^2+\frac{\left|\nabla H_f \right|^2}{H_f^2}-2\frac{\left\langle\nabla f,\nabla H_f\right\rangle}{H_f},
\end{align*}
hence
\begin{align*}
	2\Delta b_f-\left|\nabla b_f\right|^2+R=n-f-\frac{2\Delta_fH_f}{H_f}+\frac{\left|\nabla H_f \right|^2}{H_f^2},
\end{align*}
and
\begin{align*}
	w_f=\tau\left(n-f-\frac{2\Delta_fH_f}{H_f}+\frac{\left|\nabla H_f \right|^2}{H_f^2}\right)+f-\ln H_f-\frac{n}{2}\ln4\pi\tau-n.
\end{align*}
Consequently,  integration by parts immediately gives
$$\mathcal{W}_f(x,t)=\mathcal{W}\left(g,\sqrt{H_f}e^{-\frac{f}{2}}, 1-e^{-t}\right)=\int_{M^n}w_f dv_{H_f}.$$
Same as the relation between $	\mathcal N_f$ and $	\mathcal N_{x,t}$, we also have
\begin{align}\label{7.2*}
	\mathcal W_{(x,t)}(t-s)=W_f\left(\psi_{\ln\frac1{1-t}}(x),\ln\frac{1-s}{1-t}\right).
\end{align}
Conversely, fix any $a\in\mathbb R$ and $\theta>0$.
We have
\begin{align*}
	W_f(x,\theta)=\mathcal W_{\left(\psi_{a+\theta}^{-1}(x),\,1-e^{-(a+\theta)}\right)}
	\left(e^{-a}\left(1-e^{-\theta}\right)\right).
\end{align*}

Fix a spacetime point $(x,t)$ in the Ricci flow and let $\rho:=t-s>0$
be the Ricci flow backward time. Write the conjugate heat kernel as $H(x,t;y,s)=(4\pi\rho)^{-n/2}e^{-b_{x,t}(y,s)}$.
Then the Ricci flow pointed $\mathcal W$-entropy is
\begin{align*}
	\mathcal W_{x,t}(\rho)=\int_{M^n}\left[\rho\left(|\nabla b_{x,t}|_{g_s}^2+R_{g_s}\right)+b_{x,t}-n
	\right]H(x,t;y,s)\,dv_{g_s}(y).
\end{align*}
With this backward-time convention, Perelman's entropy formula gives (cf. \cite[Proposition 3.19]{LW2})
\begin{align*}
	\frac{d}{d\rho}\mathcal W_{x,t}(\rho)=-2\rho\int_{M^n}
	\left|\operatorname{Rc}_{g_s}+\nabla_{g_s}^2 b_{x,t}-\frac{1}{2\rho}g_s\right|_{g_s}^2H(x,t;y,s)\,dv_{g_s}(y).
\end{align*}
Then by standard computations as before, we conclude the following diffeomorphic counterpart of the above formula.
\begin{proposition}\label{t5.3}
	For any Ricci shrinker $(M^n,g, f)$ and $\tau(t)=1-e^{-t}$,
	\begin{align*}
		\frac{d}{dt}\mathcal{W}_f(x,t)=-2\tau\int_{M^n}\left|Hess\ln H_f(y)+\frac{\tau'}{2\tau}g\right|^2dv_{H_f}(y).
	\end{align*}
\end{proposition}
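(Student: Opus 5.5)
The plan is to deduce the formula from Perelman's monotonicity formula for the Ricci flow pointed $\mathcal W$-entropy, quoted just before the statement, by transporting it through the identification (\ref{7.2*}) and the heat kernel correspondence of Proposition \ref{p6.1}. Concretely, fix $x\in M^n$ and $t>0$. Choose $a=0$ in the converse relation, so that
\begin{align*}
\mathcal W_f(x,t)=\mathcal W_{(\psi_t^{-1}(x),\,1-e^{-t})}\left(1-e^{-t}\right).
\end{align*}
This is awkward because the base point moves with $t$. So I would instead fix the Ricci flow base point $(x_0,t_0)$ with $\psi_{\ln\frac1{1-t_0}}(x_0)=x$ and vary $s$. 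Then $\rho=t_0-s$ and $\theta=\ln\frac{1-s}{1-t_0}$ satisfy $\rho=(1-t_0)(e^{\theta}-1)$, and by (\ref{7.2*}) we have $\mathcal W_{(x_0,t_0)}(\rho)=\mathcal W_f(x,\theta)$ with $x$ fixed.

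Differentiate in $\theta$ using $\frac{d\rho}{d\theta}=(1-t_0)e^\theta=1-s$. This gives
\begin{align*}
\frac{d}{d\theta}\mathcal W_f(x,\theta)=-2\rho(1-s)\int_{M^n}\left|Rc_{g_s}+\nabla^2_{g_s}b-\frac{1}{2\rho}g_s\right|^2_{g_s}H\,dv_{g_s}.
\end{align*}
Next pull everything back to the fixed metric $\bar g$ via $z=\psi_{\sigma_s}(y)$ and $g_s=(1-s)\psi_{\sigma_s}^*\bar g$. Under this pullback:
\begin{itemize}
\item Curvature and Hessians of scalar functions are scale invariant as $(0,2)$-tensors.
\item The norm squared of a $(0,2)$-tensor picks up the factor $(1-s)^{-2}$.
\item The measure $H\,dv_{g_s}$ becomes $dv_{H_f}$.
\item The identity for $b_{x,t}$ derived above gives $b=f+h_f=f-\ln H_f-\ln\zeta$ evaluated at $(x,z,\theta)$.
\end{itemize}
Hence the tensor inside becomes $\bar{Rc}+\bar\nabla^2 f-\bar\nabla^2\ln H_f-\frac{1-s}{2\rho}\bar g$. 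The shrinker equation replaces $\bar{Rc}+\bar\nabla^2f$ by $\frac12\bar g$.

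The remaining step is bookkeeping of constants. First, $\frac{1-s}{\rho}=\frac{1}{1-e^{-\theta}}=\frac1\tau$ with $\tau=1-e^{-\theta}$. Second, $\frac12-\frac1{2\tau}=-\frac{e^{-\theta}}{2\tau}=-\frac{\tau'}{2\tau}$. The tensor is therefore $-\left(\bar\nabla^2\ln H_f+\frac{\tau'}{2\tau}\bar g\right)$, whose squared norm carries $(1-s)^{-2}$. The prefactor $2\rho(1-s)(1-s)^{-2}=2\rho/(1-s)=2\tau$ then yields exactly the claimed formula.

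The main obstacle is the legitimacy of the citation rather than the algebra. One must make sure that (\ref{7.2*}) holds for each $s$ with $x$ fixed, and that the Hessian of $b$ in $y$ transforms to the Hessian in $z$ under the pulled-back metric; the latter holds since the transformation is a diffeomorphism together with a constant rescaling at each fixed $s$.

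As an independent check, or as a backup proof, I would verify the formula directly. Use $\mathcal W_f=\tau\partial_t\mathcal N_f+\mathcal N_f$ together with the weighted Bochner formula $\Delta_f|\nabla u|^2=2|\nabla^2u|^2+2\langle\nabla u,\nabla\Delta_fu\rangle+|\nabla u|^2$, which holds because $Rc_f=\frac12 g$, applied to $u=\ln H_f$. Then integrate by parts against $dv_{H_f}$. Justifying these integrations by parts needs decay of $H_f$ and its derivatives, supplied by Theorem \ref{t3.8} and Proposition \ref{p6.2}.
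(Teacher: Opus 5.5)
Your proposal is correct and follows the same route as the paper, which likewise obtains the formula by transporting Perelman's backward-time monotonicity formula for $\mathcal W_{x,t}(\rho)$ through the correspondence (\ref{7.2*}) and then only says ``standard computations as before''. Your explicit bookkeeping checks out and supplies exactly those omitted computations: fixing the Ricci flow base point so that $x$ stays fixed, $\frac{d\rho}{d\theta}=1-s$, $\frac{1-s}{\rho}=\frac{1}{\tau}$, the factor $(1-s)^{-2}$ from the tensor norm, the shrinker equation turning the tensor into $-\bigl(Hess\ln H_f+\frac{\tau'}{2\tau}g\bigr)$, and the prefactor $\frac{2\rho}{1-s}=2\tau$.
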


Recall that by the definition,$\mathcal{W}_f\left(x,t\right)=\tau\partial_t\mathcal{N}_f+\mathcal{N}_f$ for $\tau=1-e^{-t}$, therefore as usual,  the monotonicity of $\mathcal{W}_f$ implies the same monotonicity of $\mathcal N_f$.
\begin{corollary}\label{c5.4}
	\begin{align*}
		\boldsymbol{\mu}\leq\mathcal{W}_f(x,t)\leq\mathcal{N}_f(x,t)&=\frac{1}{e^t-1}\int_{0}^{t}e^s\mathcal{W}_f(x,s)ds\leq0.
	\end{align*}
	As a consequence,
	\begin{align*}
		\frac{d}{dt}\mathcal{N}_f(x,t)&\leq 0.
	\end{align*}
\end{corollary}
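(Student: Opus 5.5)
The plan is to establish the chain $\boldsymbol{\mu}\le\mathcal W_f\le\mathcal N_f\le0$ in four pieces: the integral formula, the middle inequality, the upper bound $0$, and the lower bound $\boldsymbol{\mu}$.

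\textbf{The integral formula.} Start from the identity $\mathcal W_f=\tau\partial_t\mathcal N_f+\mathcal N_f$ with $\tau=1-e^{-t}$. Multiply by $e^t$. Since $\tau e^t=e^t-1$ and $(e^t-1)'=e^t$, this becomes
\begin{align*}
e^t\mathcal W_f(x,t)=\frac{d}{dt}\left[(e^t-1)\mathcal N_f(x,t)\right].
\end{align*}
Integrate from $0$ to $t$. The boundary term $(e^s-1)\mathcal N_f(x,s)$ vanishes as $s\to0$. To justify this, I will show $\mathcal N_f(x,s)$ stays bounded near $s=0$. By \eqref{7.1*}, $\mathcal N_f$ equals a Ricci flow pointed Nash entropy, and these are bounded above by $0$ and below by $\boldsymbol{\mu}$-type bounds (cf. \cite{LW2}). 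Alternatively, boundedness follows from the short-time Gaussian behaviour of $H_f$ given by Theorem \ref{t2.3} and Proposition \ref{p6.2}. This yields the stated formula.

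\textbf{The middle inequality $\mathcal W_f\le\mathcal N_f$.} By Proposition \ref{t5.3}, $s\mapsto\mathcal W_f(x,s)$ is nonincreasing, so $\mathcal W_f(x,s)\ge\mathcal W_f(x,t)$ for $s\le t$. Substituting into the integral formula gives
\begin{align*}
\mathcal N_f(x,t)\ge\frac{\mathcal W_f(x,t)}{e^t-1}\int_0^te^sds=\mathcal W_f(x,t).
\end{align*}
Then $\tau\partial_t\mathcal N_f=\mathcal W_f-\mathcal N_f\le0$, which gives the monotonicity of $\mathcal N_f$.

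\textbf{The upper bound $\mathcal N_f\le0$.} Monotonicity reduces this to the short-time limit: it suffices to show $\lim_{s\to0}\mathcal W_f(x,s)\le 0$, or equivalently $\limsup_{t\to0}\mathcal N_f\le0$. The cleanest route is through \eqref{7.1*}: $\mathcal N_f(x,\theta)$ is the Ricci flow Nash entropy $\mathcal N_{(x',t')}(\rho)$, and the Ricci flow satisfies $\mathcal N\le0$ with limit $0$ as $\rho\to0$ on a smooth flow (\cite{LW2}). Since $\mathcal N_f\le \lim_{s\to 0}\mathcal N_f=0$, the integral formula with $\mathcal W_f\le\mathcal N_f$ closes the chain.

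\textbf{The lower bound $\boldsymbol{\mu}\le\mathcal W_f$.} Set $u=\sqrt{H_f}e^{-f/2}$. Then $\int u^2=1$, $\int|\nabla u|^2<\infty$, and the second moment is finite by the Gaussian bounds of Theorem \ref{t2.3} and Proposition \ref{p6.2}, so $u$ is admissible. By definition $\mathcal W_f(x,t)=\mathcal W(g,u,1-e^{-t})\ge\boldsymbol{\mu}(g,1-e^{-t})$. Finally, invoke the fact from \cite{LW} that $\boldsymbol{\mu}(g,\tau)\ge\boldsymbol{\mu}$ for $\tau\in(0,1]$, which is exactly the optimal Log-Sobolev inequality already used in Theorem \ref{t2.1}.

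The main obstacle is the justification of the limits at $t\to0$ and of the integrability needed for the integrations by parts on noncompact $M$. I will handle these by transporting everything to the Ricci flow through Proposition \ref{p6.1} and the relations \eqref{7.1*} and \eqref{7.2*}, where Li–Wang have already verified these properties.
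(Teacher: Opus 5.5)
Your proof is correct. For the integral formula and for $\mathcal W_f\le\mathcal N_f$ it coincides with the paper. Substituting $\mathcal W_f(x,s)\ge\mathcal W_f(x,t)$ into the integral formula is the same inequality the paper gets from $y(t)=\int_0^te^s\mathcal W_f\,ds-(e^t-1)\mathcal W_f(x,t)$ with $y'\ge0$. Your form also spares the check $y(0)=0$.

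The real differences are at the two ends of the chain:
\begin{itemize}
\item \emph{Upper bound $\mathcal N_f\le0$.} The paper imports $\mathcal W_{x,t}(\rho)\le0$ from \cite{LW2} through \eqref{7.2*} and feeds it into the integral formula. You import only $\lim_{\rho\to0}\mathcal N_{(x,t)}(\rho)=0$, which the paper needs anyway for its initial condition. You then conclude from the monotonicity of $\mathcal N_f$ that you have just derived.
\item \emph{Lower bound $\boldsymbol\mu\le\mathcal W_f$.} The paper again transports \cite[Corollary 3.22]{LW2} through \eqref{7.2*}. You instead argue intrinsically on the static shrinker: $\mathcal W_f(x,t)$ is by definition $\mathcal W(g,u,1-e^{-t})$ with $u=\sqrt{H_f}e^{-f/2}$. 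Hence it is $\ge\boldsymbol\mu(g,1-e^{-t})\ge\boldsymbol\mu$ by the optimal log-Sobolev inequality of \cite{LW}, already used in Theorem \ref{t2.1}.
\end{itemize}

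Your lower-bound argument makes that piece independent of the Ricci-flow dictionary. The cost is that you must verify $u\in W^{1,2}_*$. The second moment and the bound on $\int fu^2\,dv$ do follow from the Gaussian decay in Theorem \ref{t2.3}. However, $\int|\nabla u|^2dv<\infty$ amounts to finiteness of the Fisher information $\int|\nabla_y\ln H_f|^2dv_{H_f}$. That needs an argument, for example the gradient estimate \eqref{7.14*}, rather than a bare assertion.
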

\begin{proof}
	$\boldsymbol{\mu}\leq\mathcal{W}_{x,t}(\rho)\leq0$ is a standard result, cf. \cite[Corollary 3.22]{LW2}. Thus by (\ref{7.2*}), we have $\boldsymbol{\mu}\leq\mathcal{W}_f(x,t)$.
	Note that
	\begin{align*}
		\frac{d}{dt}\left(\tau\mathcal{N}_f\right)&=\tau'\mathcal{N}_f+\tau\partial_t\mathcal{N}_f\\
		&=e^{-t}\mathcal{N}_f+\tau\partial_t\mathcal{N}_f\\
		&=\tau\partial_t\mathcal{N}_f+\mathcal{N}_f+(e^{-t}-1)\mathcal{N}_f\\
		&=	\mathcal{W}_f-\tau	\mathcal{N}_f.
	\end{align*}
	Solving the above equation with initial condition $\lim\limits_{t\rightarrow0}\left(1-e^{-t}\right)\mathcal N_f(x,t)=0$ gives
	\begin{align*}
		\tau\mathcal{N}_f=e^{-t}\int_{0}^{t}e^s\mathcal{W}_f(x,s)ds,
	\end{align*}
	and this yields
	\begin{align}
		\mathcal{N}_f(x,t)&=\frac{e^{-t}}{1-e^{-t}}\int_{0}^{t}e^s\mathcal{W}_f(x,s)ds\nonumber\\
		&=\frac{1}{e^t-1}\int_{0}^{t}e^s\mathcal{W}_f(x,s)ds.\label{5.6}
	\end{align}
	Here the validity of initial condition comes from (\ref{7.1*}) and the fact that spacetime $\mathcal N_{x,t}(0)=0$. Set
	$$y(t)=\int_{0}^{t}e^s\mathcal{W}_f(x,s)ds-(e^t-1)\mathcal{W}_f(x,t),$$
	then by Proposition \ref{t5.3},
	\begin{align*}
		y(0)=0,\quad y'(t)=e^t\mathcal{W}_f-e^t\mathcal{W}_f-(e^t-1)\partial_t\mathcal{W}_f\geq0.
	\end{align*}
	Consequently, 
	\begin{align}
		\int_{0}^{t}e^s\mathcal{W}_f(x,s)ds\geq(e^t-1)\mathcal{W}_f(x,t).	\label{5.7}
	\end{align}
	Combining (\ref{5.6}) and (\ref{5.7}) completes the proof.
\end{proof}

For Ricci shrinkers, by \cite{LW},  $\boldsymbol{\mu}=\boldsymbol{\mu}(g,1)=\ln\left[ (4\pi)^{-\frac{n}{2}}\int_{M^n}e^{-f}dv\right]$. On the basis of the relation between $\mathcal{N}_f$ and $\mathcal{W}_f$ in the above corollary, we are able to give the long time behavior of entropy.
\begin{corollary}\label{c5.5}
	\begin{align*}
		\lim\limits_{t\longrightarrow+\infty}\mathcal{N}_f(x,t)=\lim\limits_{t\longrightarrow+\infty}\mathcal{W}_f(x,t)=\boldsymbol{\mu}.
	\end{align*}
\end{corollary}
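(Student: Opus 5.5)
By Corollary \ref{c5.4}, $\boldsymbol{\mu}\le\mathcal W_f(x,t)\le\mathcal N_f(x,t)$ for every $t>0$. It therefore suffices to show
\begin{align*}
\limsup_{t\to\infty}\mathcal N_f(x,t)\le\boldsymbol{\mu},
\end{align*}
since squeezing then gives both limits at once. Monotonicity of $\mathcal N_f$ in $t$ guarantees that the limit of $\mathcal N_f$ exists, and the limit of $\mathcal W_f$ follows from the squeeze.

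To bound $\mathcal N_f$ from above, I will write it as
\begin{align*}
\mathcal N_f(x,t)=\int_{M^n}\left[f-\ln H_f(x,\cdot,t)\right]dv_{H_f}-\frac n2\ln\left(4\pi(1-e^{-t})\right)-\frac n2 .
\end{align*}
By Proposition \ref{p1.2}, $H_f(x,y,t)\to(4\pi)^{-n/2}e^{-\boldsymbol{\mu}}$ pointwise as $t\to\infty$. Heuristically, the measure $dv_{H_f}$ then converges to the normalized measure $(4\pi)^{-n/2}e^{-\boldsymbol{\mu}}e^{-f}dv$, and the integral converges to
\begin{align*}
(4\pi)^{-n/2}e^{-\boldsymbol{\mu}}\int_{M^n} f e^{-f}dv+\frac n2\ln 4\pi+\boldsymbol{\mu}.
\end{align*}
Integrating $\Delta_f f=\frac n2-f$ against $e^{-f}dv$ gives $\int f e^{-f}dv=\frac n2\int e^{-f}dv$, so the weighted average of $f$ equals $\frac n2$. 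Collecting terms, the limit of $\mathcal N_f$ is $\frac n2+\frac n2\ln4\pi+\boldsymbol{\mu}-\frac n2\ln4\pi-\frac n2=\boldsymbol{\mu}$, as desired.

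The main obstacle is justifying the passage to the limit under the integral, which needs domination uniform in large $t$. For $t\ge 1$ I will use Theorem \ref{t3.8}:
\begin{align*}
H_f(x,y,t)\le C(n)e^{-\boldsymbol{\mu}}\exp\left(\frac{f(x)+f(y)}{2}\left(1-\tanh\frac t4\right)\right).
\end{align*}
For large $t$ the exponent on $f(y)$ is at most $\epsilon f(y)$, with $\epsilon$ small. Hence the integrands
\begin{align*}
f H_f e^{-f}\quad\text{and}\quad H_f(\ln H_f)_+e^{-f}
\end{align*}
are bounded by $C(1+f)e^{-(1-\epsilon)f}$. This majorant is integrable because $f$ grows quadratically while volume grows polynomially, as used in Corollary \ref{c7.4}.

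The negative part of $\ln H_f$ cannot be dominated, since there is no lower bound on $H_f$. This is harmless: $-\ln H_f$ enters $\mathcal N_f$ with a minus sign, so only an upper bound is required. I will apply dominated convergence to $\int fH_fe^{-f}dv$ and to $\int H_f(\ln H_f)_+e^{-f}dv$. For the remaining term, Fatou's lemma applied to the nonnegative quantity $H_f(\ln H_f)_-$ gives
\begin{align*}
\liminf_{t\to\infty}\int H_f\ln H_f\, e^{-f}dv\ge\int \lim H_f\ln\lim H_f\, e^{-f}dv .
\end{align*}
This yields $\limsup_{t\to\infty}\mathcal N_f(x,t)\le\boldsymbol{\mu}$, and combined with the lower bound from Corollary \ref{c5.4}, it completes the proof.
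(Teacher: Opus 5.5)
Your overall strategy works, but the step that handles the negative part of $\ln H_f$ is wrong as written. Set $H_\infty:=(4\pi)^{-n/2}e^{-\boldsymbol{\mu}}$ and $B_t:=\int_{M^n}H_f\ln H_f\,e^{-f}dv$. Since $\mathcal N_f$ contains $-B_t$, proving $\limsup_{t\to\infty}\mathcal N_f\le\boldsymbol{\mu}$ requires $\liminf_{t\to\infty}B_t\ge B_\infty$. The negative part enters $B_t$ as $-\int H_f(\ln H_f)_-e^{-f}dv$. So you need an \emph{upper} bound on $\limsup_t\int H_f(\ln H_f)_-e^{-f}dv$. Fatou's lemma applied to this nonnegative integrand gives only the lower bound $\liminf_t\int H_f(\ln H_f)_-e^{-f}dv\ge\int H_\infty(\ln H_\infty)_-e^{-f}dv$. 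Combined with dominated convergence for the other terms, your argument therefore yields $\liminf_{t\to\infty}\mathcal N_f\ge\boldsymbol{\mu}$, which is the half already supplied by Corollary \ref{c5.4}.

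The premise behind the detour is also false. The quantity to dominate is $H_f(\ln H_f)_-$, not $(\ln H_f)_-$, and $0\le z(\ln z)_-\le e^{-1}$ for every $z>0$. Hence
\begin{align*}
0\le H_f(\ln H_f)_-e^{-f}\le e^{-1}e^{-f}\in L^1(M^n,dv),
\end{align*}
with no lower bound on $H_f$ needed. Equivalently, Fatou does apply to $H_f\ln H_f\,e^{-f}$ itself, but only because it is bounded below by $-e^{-1-f}$. With this one-line fix, dominated convergence handles all three terms and gives $\lim_{t\to\infty}\mathcal N_f(x,t)=\boldsymbol{\mu}$ directly; your squeeze then finishes. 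The paper uses the same device for this term: $z|\ln z|\le C(Z)\sqrt z$ together with $H_fe^{-f}\le C(x)e^{-f}$ from Proposition \ref{p6.2}.

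With that repair, your proof is correct and takes a genuinely different route from the paper's. The paper computes $\lim\mathcal W_f$ first, in four steps:
\begin{itemize}
\item it removes the $\Delta_fH_f$ term by a cutoff argument;
\item it bounds the Fisher information $I(t)=\int|\nabla\ln H_f|^2dv_{H_f}$ by $C/(e^t-1)$, using the Ricci-flow gradient estimate (\ref{7.14*}) pulled back through Proposition \ref{p6.1};
\item it applies dominated convergence to the remaining terms;
\item it transfers the limit to $\mathcal N_f$ by L'H\^opital on $\mathcal N_f=\frac1{e^t-1}\int_0^te^s\mathcal W_f\,ds$.
\end{itemize}
You go the other way. $\mathcal N_f$ involves no derivatives of $H_f$, so you only need zeroth-order inputs: pointwise convergence from Proposition \ref{p1.2} and a uniform pointwise upper bound. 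The $\mathcal W_f$ limit then comes for free from $\boldsymbol{\mu}\le\mathcal W_f\le\mathcal N_f$. This sandwich transfers limits only in your direction: knowing $\mathcal W_f\to\boldsymbol{\mu}$ gives no upper bound on $\mathcal N_f$, which is why the paper needs the L'H\^opital step. Your route is shorter and avoids the imported gradient estimate entirely. The paper's route yields, as a by-product, the quantitative decay $I(t)=O(e^{-t})$ of the Fisher information.
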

\begin{proof}
	Let $\varphi_r$ be a cut-off function as in \cite[Lemma 3]{LW} such that it is supported in $\left\lbrace f\leq 2r\right\rbrace $, $\left|\nabla_y\varphi_r\right|\leq Cr^{-\frac{1}{2}}$ and $\left|\Delta_y\varphi_r\right|\leq C$ for some uniform constant $C>0$.
	By Proposition \ref{p6.2}, for given $(x,t)$, $H_f$ is bounded, and hence applying the gradient estimate (\ref{7.14*}) to $H_f(x,y,t/2+s)$ with $s\in\left[0,t/2\right] $, $\left| \nabla_y H_f\right|$ is bounded. Note that
	\begin{align*}
		0&=\int_{M^n}\Delta_{f,y}\left(\varphi_rH_f\right)e^{-f(y)}dv(y)\\
		&=\int_{M^n}\left( \varphi_r\Delta_{f,y}H_f+H_f\Delta_{f,y}\varphi_r+2\left\langle\nabla_y\varphi_r,\nabla_yH_f\right\rangle\right) e^{-f(y)}dv(y),
	\end{align*}
	so setting $r\longrightarrow+\infty$, it's easy to see 
	\begin{align*}
		\int_{M^n}\Delta_{f,y}H_fe^{-f(y)}=0.
	\end{align*}
	Then
	\begin{align}
		\mathcal{W}_f(x,t)&=\mathcal{W}\left(g,\sqrt{H_f}e^{-\frac{f}{2}}, 1-e^{-t}\right)\nonumber\\
		&=\int_{M^n}\left[\tau\left(n-f-\frac{2\Delta_fH_f}{H_f}+\frac{\left|\nabla H_f \right|^2}{H_f^2}\right)+f-\ln H_f-\frac{n}{2}\ln4\pi\tau-n\right]H_fe^{-f}dv\nonumber\\
		&=\int_{M^n}\left[\tau\left(n-f+\frac{\left|\nabla H_f \right|^2}{H_f^2}\right)+f-\ln H_f-\frac{n}{2}\ln4\pi\tau-n\right]H_fe^{-f}dv.\label{7.13}
	\end{align}
	
	For the gradient term, we estimate it as follows.	The quantity we need to control is
	\begin{align*}
		I(t):=\int_{M^n} \frac{\left|\nabla_y H_f(x,y,t)\right|^2}{H_f(x,y,t)}e^{-f(y)}\,dv(y)=\int_{M^n}\left|\nabla_y\ln H_f\right|^2dv_{H_f}(y).
	\end{align*}
	After pulling the heat kernel gradient estimate under Ricci flow (cf. \cite[Theorem 4.6]{LW2}) back to the fixed shrinker metric as before and by symmetry of $H_f$, we have
	\begin{align}
		\frac{\left|\nabla_y H_f(x,y,t)\right|}{H_f(x,y,t)}\leq\sqrt{\frac{C}{e^t-1}}
		\sqrt{\ln\left(\frac{C e^{-	\mathcal N_f(y,t)}}{(1-e^{-t})^{n/2}H_f(x,y,t)e^{-f(x)}}\right)},\label{7.14*}
	\end{align}
	where $C=C(n)>0$. Set $\tau(t):=1-e^{-t}$ and define
	\begin{align*}
		\Lambda_t(y):=\ln\left(\frac{C e^{-	\mathcal N_f(y,t)}}{\tau(t)^{n/2}H_f(x,y,t)e^{-f(x)}}	\right).
	\end{align*}
	Then
	\begin{align*}
		\left|\nabla_y\ln H_f(y)\right|^2\leq\frac{C}{e^t-1}\Lambda_t(y),\quad 	I(t)\leq\frac{C}{e^t-1}	\int_{M^n} \Lambda_t(y)dv_{H_f}(y).
	\end{align*}
	We compute
	\begin{align*}
		\int_{M^n}\Lambda_tdv_{H_f}(y)
		&=\ln C+f(x)-\frac n2\ln\tau(t)-\int_{M^n} 	\mathcal N_f(y,t)dv_{H_f}(y)-\int_{M^n} \ln H_f(y)dv_{H_f}(y).
	\end{align*}
	By Corollary \ref{c5.4}, one has $\mathcal{N}_f\geq\mathcal W_f\geq \mu$ and hence $\mathcal N_f(y,t)\geq \mu$, therefore
	\begin{align*}
		-\int_{M^n} \mathcal{N}_f(y,t)dv_{H_f}(y)\leq -\boldsymbol{\mu}.
	\end{align*}
	Next, by the definition of $\mathcal N_f(x,t)$,
	\begin{align*}
		\mathcal{N}_f(x,t)=	\int_{M^n} fdv_{H_f}-\int_{M^n} \ln H_fdv_{H_f}-\frac n2\ln(4\pi\tau(t))-\frac n2.
	\end{align*}
	Therefore
	\begin{align*}
		-\int_{M^n}\ln H_fdv_{H_f}=	\mathcal N_f(x,t)-\int_{M^n} fdv_{H_f}+\frac n2\ln(4\pi\tau(t))+\frac n2.
	\end{align*}
	Since $f\geq0$,
	\begin{align*}
		-\int_{M^n}\ln H_fdv_{H_f}\leq\mathcal{N}_f(x,t)+\frac n2\ln(4\pi\tau(t))+\frac n2.
	\end{align*}
	Substituting this into the previous estimate gives
	\begin{align*}
		\int_M\Lambda_tdv_{H_f}&\leq ln C+f(x)-\boldsymbol{\mu}+\mathcal{N}_f(x,t)+\frac n2\ln(4\pi)+\frac n2.
	\end{align*}
	Now fix any $t_0>0$. Since $\mathcal{N}_f(x,t)$ is nonincreasing in $t$, for all $t\geq t_0$, we have $\mathcal{N}_f(x,t)\leq \mathcal{N}_f(x,t_0)$.
	Therefore, for $t\geq t_0$,
	\begin{align*}
		\int_{M^n}\Lambda_tdv_{H_f}\leq	A_{x,t_0},
	\end{align*}
	where $A_{x,t_0}:=\ln C+f(x)-\mu+\mathcal{N}_f(x,t_0)+\frac n2\ln(4\pi)+\frac n2$.
	Thus
	\begin{align}
		I(t)\leq\frac{C A_{x,t_0}}{e^t-1},\quad t\geq t_0.\label{7.14}
	\end{align}
	In particular, $I(t)\to0$
	as $t\to\infty$.
	
	Moreover, for all sufficiently large $t$, there exists a uniform constant $C=C(x,n,\boldsymbol{\mu})>0$ such that 
	\begin{align}
		Q:=\left|\tau\left(n-f(y)\right)+f(y)-\ln H_f-\frac{n}{2}\ln4\pi\tau-n\right|H_fe^{-f(y)}\leq Ce^{-\frac{f(y)}{2}}\in L^1(M^n,g,dv).\label{7.15}
	\end{align}
Indeed, set \(q_{t}(y):=H_{f}(x,y,t)e^{-f(y)}\), then by Proposition \ref{p6.2} and the symmetry of \(H_{f}\), for \(t\geq 1\), we have $q_{t}(y)\leq C(x,n,\boldsymbol{\mu})e^{-f(y)}$.
Since \(f(y)-\ln H_{f}(x,y,t)=-\ln q_{t}(y)\) and \(1-e^{-1}\leq\tau<1\), we have
\begin{align*}
	Q\leq C\left( (1+f(y))q_{t}(y)+q_{t}(y)\left|\ln q_{t}(y)\right| \right)
	\leq C e^{-f(y)/2},
\end{align*}
where we used \(z\lvert\ln z\rvert\leq C(Z)\sqrt{z}\) for \(0<z\leq Z\).
	
	Meanwhile, by Proposition \ref{p1.2}, $	H_f(x,y,t)\longrightarrow (4\pi)^{-\frac{n}{2}}e^{-\boldsymbol{\mu}} \hspace*{0.5em}as\hspace*{0.5em}t\longrightarrow\infty.$
	Now by \ref{7.14} and (\ref{7.15}), we can set $t\longrightarrow\infty$ in (\ref{7.13}) to obtain
	\begin{align*}
		\lim\limits_{t\longrightarrow+\infty}\mathcal{W}_f(x,t)=\boldsymbol{\mu}e^{\boldsymbol{-\mu}}(4\pi)^{-\frac{n}{2}}\int_{M^n}e^{-f}dv=\boldsymbol{\mu}.
	\end{align*}
	Since $\mathcal{N}_f(x,t)=\frac{1}{e^t-1}\int_{0}^{t}e^s\mathcal{W}_f(x,s)ds$, therefore if $\boldsymbol{\mu}=0$, then the conclusion is obvious. If $\boldsymbol{\mu}<0$, then
	\begin{align*}
		\lim\limits_{t\longrightarrow+\infty}\mathcal{N}_f(x,t)=\lim\limits_{t\longrightarrow+\infty}\frac{\left(\int_{0}^{t}e^s\mathcal{W}_f(x,s)ds\right)'}{\left(e^t-1\right)'}=\lim\limits_{t\longrightarrow+\infty}\frac{e^t\mathcal{W}_f(x,t)}{e^t}=\boldsymbol{\mu}.
	\end{align*}
\end{proof}

\section{Reduced distance}\label{sec-rd}
Under Ricci flow, Perelman's reduced distance provides a sharp lower bound of the conjugate spacetime heat kernel. Thus it's natural to extend this distance to Ricci shrinker by diffeomorphisms and use it to control the lower bound of $H_f$.

Let $\gamma$ be a spacetime curve parametrized by $\tau=-t\in\left[0,\infty\right)$, connecting $(x,0)$ to $(y,-\tau_0)$. Note that
$\gamma(\tau) \in M ^n\times \{-\tau\}$ with metric $g(t)=g(-\tau)$. Then define
\begin{align*}
	\mathcal{L}(\gamma)=\int_0^{\tau_0} \sqrt{\tau} \left(R_{g_t}+|\dot{\gamma}|_{g_t}^2\right) d\tau,\quad L(y,-\tau_0)=\inf_{\gamma}\mathcal{L}(\gamma)\quad\mbox{and}\quad \ell(y,-\tau_0)=\ell_{(x,0)}(y,-\tau_0)=\frac{L(y,-\tau_0)}{2\sqrt{\tau_0}}.
\end{align*}
On the modified Ricci flow solution, we construct curve $\hat{\gamma}$ such that
\begin{align*}
	\hat{\gamma}(\theta)=\gamma(\tau), \quad \theta= \ln (1-t) =\ln (1+\tau), \quad \tau=e^{\theta}-1. 
\end{align*}
Thus, we have
\begin{align*}
	\hat{\gamma}'=\frac{\partial}{\partial \theta} \hat{\gamma}
	=\frac{\partial}{\partial \tau} \gamma \cdot \frac{d\tau}{d\theta}
	=\frac{\partial}{\partial \tau} \gamma \cdot e^{\theta}
	=(1+\tau) \dot{\gamma}
\end{align*}
We define reduced length functional on the spacetime $(M^n, \hat{g}(s))$, which is naturally the pushforward of $\mathcal{L}$ on the Ricci flow spacetime $(M^n, g(t))$.
Recall that $g_t=e^{-s}\hat{g}_s=e^{\theta} \hat{g}_s$. 
Then we have
\begin{align*}
	\hat{\mathcal{L}}(\hat{\gamma})
	&=\mathcal{L}(\gamma)
	=\int_0^{\theta_0} \sqrt{e^{\theta}-1} \cdot e^{-\theta} \left(R_{\hat{g}_s} + |\hat{\gamma}'|_{\hat{g}_s}^2\right) \cdot e^{\theta} d\theta\\
	&=\int_0^{\theta_0} \sqrt{e^{\theta}-1} \left(R_{\hat{g}_s} + |\hat{\gamma}'|_{\hat{g}_s}^2\right) d\theta. 
\end{align*}
As before, we further push this to the static spacetime $(M^n, \bar{g}_s)$ for $\bar{g}_s \equiv \bar{g}$.  In other words, we define
\begin{align*}
	\bar{\gamma}(\theta)=\psi_s(\hat{\gamma})=\psi_{-\theta}(\hat{\gamma}).
\end{align*}
It follows that
\begin{align*}
\bar{\gamma}'(\theta)=\frac{d}{d\theta}\left(\psi_{-\theta}\left(\hat{\gamma}(\theta)\right)\right)=d\psi_{-\theta}\left(\hat{\gamma}'(\theta)\right)-X\left(\bar{\gamma}(\theta)\right).
\end{align*}
Therefore, we have
\begin{align*}
	\bar{\mathcal{L}}(\bar{\gamma})=\hat{\mathcal{L}}(\hat{\gamma})
	&=\int_0^{\theta_0} \sqrt{e^{\theta}-1} \left(R_{\hat{g}_s} + |\hat{\gamma}'|_{\hat{g}_s}^2\right) d\theta\\
	&=\int_0^{\theta_0} \sqrt{e^{\theta}-1} \left(R_{\bar{g}} + |\bar{\gamma}'+X|_{\bar{g}}^2\right) d\theta\\
	&=\int_0^{\theta_0} \sqrt{e^{\theta}-1} \left(R_{\bar{g}} + |X|_{\bar{g}}^2 + |\bar{\gamma}'|_{\bar{g}}^2 + 2 \langle X, \bar{\gamma}' \rangle_{\bar{g}}\right) d\theta.
\end{align*}
Here $\bar{g}$ is the standard Ricci shrinker metric $g$. Note that $X=\nabla f$ and $R+|\nabla f|^2=f$. Therefore, we have 
\begin{align*}
	\bar{\mathcal{L}}(\bar{\gamma})= \int_0^{\theta_0} \sqrt{e^{\theta}-1} \left(f + |\bar{\gamma}'|^2 + 2 \langle \nabla f, \bar{\gamma}' \rangle\right) d\theta. 
\end{align*}
On the other hand, 
\begin{align*}
	\int_0^{\theta_0} 2\sqrt{e^{\theta}-1}  \langle \nabla f, \bar{\gamma}' \rangle d\theta
	&=\left.2\sqrt{e^{\theta}-1}f\left(\bar{\gamma}(\theta)\right)\right|_{\theta=0}^{\theta_0}- \int_0^{\theta_0} f \cdot \frac{e^{\theta}}{\sqrt{e^{\theta}-1}} d\theta\\
	&=2\sqrt{e^{\theta_0}-1} f\left(\bar{\gamma}(\theta_0)\right)- \int_0^{\theta_0} \sqrt{e^{\theta}-1}  \cdot f \cdot \frac{e^{\theta}}{e^{\theta}-1} d\theta. 
\end{align*}
It follows that
\begin{align*}
	\bar{\mathcal{L}}(\bar{\gamma})
	&=2\sqrt{e^{\theta_0}-1}f\left(\bar{\gamma}(\theta_0)\right)
	+\int_0^{\theta_0} 
	\left\{ \sqrt{e^{\theta}-1}\left( |\bar{\gamma}'|^2  - \frac{f}{e^{\theta}-1}\right) \right\} d\theta\\
	&=2\sqrt{\tau_0}f\left(\psi_{-\ln(1+\tau_0)}(y)\right)
	+\int_0^{\theta_0} 
	\left\{ \sqrt{e^{\theta}-1}\left( |\bar{\gamma}'|^2  - \frac{f}{e^{\theta}-1}\right) \right\} d\theta.
\end{align*}

In view of the above discussions, for two spacetime points $(x,0)$, $(y,t)$ on $(M^n, g, f)\times[0,t]$ and the path $\gamma(s)$ connecting them, we define the weighted $\mathcal{L}$ distance as
\begin{align*}
	0\leq\mathcal{L}^x_f(\gamma)
	=2\sqrt{e^{t}-1}f(y) 
	+\int_0^{t} 
	\left\{ \sqrt{e^{s}-1}\left( |\gamma'(s)|^2-\frac{f\left(\gamma(s)\right)}{e^{s}-1}\right) \right\} ds
\end{align*}
and weighted reduced distance as
$$\ell^x_f(y,t)=\inf_{\gamma}\frac{\mathcal{L}^x_f(\gamma)}{2\sqrt{e^t-1}}=f(y)+\inf_{\gamma}\frac{1}{2\sqrt{e^t-1}}\int_0^{t} 
\left\{ \sqrt{e^{s}-1}\left( |\gamma'(s)|^2-\frac{f\left(\gamma(s)\right)}{e^{s}-1}\right) \right\} ds.$$
Here after the pullback, we relabel the endpoint \(\psi_{-\theta_0}(y)\) as \(y\). This is just the diffeomorphic counterpart of reduced distance under Ricci flow.
Recall from standard cut-off arguments of \cite[Theorem 16]{LW}, for the Ricci flow induced by a Ricci shrinker, we have
\begin{align*}
	H(x,0;y,s)\geq \frac{\exp\left(-\ell(y,s)\right)}{\left(4\pi (-s) \right)^{\frac{n}{2}}}.
\end{align*}
By standard diffeomorphisms as before, this implies
\begin{align*}
	H_f(x,y,t)e^{-f(y)}\geq \frac{\exp\left(-\bar{\ell}^x_f(y,t)-f(y)\right)}{\left(4\pi (1-e^{-t}) \right)^{\frac{n}{2}}},
\end{align*}
where $	\bar{\ell}^x_f(y,t):=\ell^x_f(y,t)-f(y)$.
Especially,
\begin{align}\label{8.1}
	H_f(x,y,t)\geq \frac{\exp\left(-\bar{\ell}^x_f(y,t)\right)}{\left(4\pi (1-e^{-t}) \right)^{\frac{n}{2}}},	
\end{align}

\begin{theorem}[\textbf{=Theorem \ref{t1.3'}}]\label{t15.1}
	For any Ricci shrinker $(M^n,g,f)$, $x,y\in M^n$ and $t>0$,
	\begin{align*}
		H_f(x, y, t) \geq \frac{e^{-\bar{\ell}^x_f(y,t)}}{\left(4\pi (1-e^{-t}) \right)^{\frac n2}}.
	\end{align*}
	Moreover, if the inequality becomes an equality somewhere, then the Ricci shrinker is isometric to the standard flat Gaussian shrinker $\left(\mathbb{R}^n,g_E,\frac{x^2}{4}\right)$. 
\end{theorem}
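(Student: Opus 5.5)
The inequality itself is essentially already contained in (\ref{8.1}). I would first make the transfer fully rigorous, and then spend most of the effort on the rigidity statement.

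\textbf{Step 1: the inequality.} Start from Perelman's lower bound on the induced Ricci flow, $H(x',0;y',s)\ge (4\pi(-s))^{-n/2}e^{-\ell_{(x',0)}(y',s)}$, which follows from \cite[Theorem 16]{LW} via cut-off arguments. Apply the converse identity of Proposition \ref{p6.1} with $\tau+t=0$, i.e. $\tau=-t$. This gives
\begin{align*}
H_f(x,y,t)e^{-f(y)}=e^{nt/2}H\left(x,0;\psi_{-t}^{-1}(y),1-e^{t}\right),
\end{align*}
where the base point is placed at Ricci-flow time $0$; the relabeling of $x$ through $\psi_0=\mathrm{id}$ is harmless. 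Since $-s=e^t-1$, we have $e^{nt/2}(4\pi(e^t-1))^{-n/2}=(4\pi(1-e^{-t}))^{-n/2}$. The computation preceding the theorem shows that the reduced length of a Ricci-flow curve equals $\mathcal L^x_f$ of its pulled-back curve, so $\ell_{(x,0)}$ at the corresponding point equals $\ell^x_f(y,t)=\bar\ell^x_f(y,t)+f(y)$. Dividing by $e^{-f(y)}$ then yields the stated bound.

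\textbf{Step 2: rigidity, reduced to the flow.} Suppose equality holds at some $(y_0,t_0)$. By the exact correspondence above, this is equivalent to equality in Perelman's bound for the conjugate heat kernel at the point $(\psi_{-t_0}^{-1}(y_0),1-e^{t_0})$. Consider $v:=[(-s)(2\Delta\ell-|\nabla\ell|^2+R)+\ell-n]\,(4\pi(-s))^{-n/2}e^{-\ell}$, understood in the barrier sense, which satisfies $\square^* v\le 0$. Define $\tilde H:=H-(4\pi(-s))^{-n/2}e^{-\ell}$ as a function of the lower spacetime variable. Then $\tilde H\ge 0$, and $\square^*$ applied to $(4\pi(-s))^{-n/2}e^{-\ell}$ is nonpositive (Perelman's inequality $\square^*(\cdot)\le0$), so $\tilde H$ is a nonnegative supersolution in the correct time direction. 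Equality at one interior point then forces, by the strong maximum principle, $\tilde H\equiv 0$ on the whole slab between $s_0$ and $0$. I would use the strong maximum principle on compact sets, with the cut-off functions of \cite[Lemma 3]{LW} to justify it on the noncompact shrinker.

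\textbf{Step 3: identifying the Gaussian shrinker.} If $(4\pi(-s))^{-n/2}e^{-\ell}$ is an exact solution of $\square^*u=0$, then Perelman's differential inequality for $\ell$ must be an equality. This gives $\operatorname{Rc}+\nabla^2\ell-\frac{1}{2(-s)}g\equiv0$ together with the vanishing of the Harnack expression. Combined with $\int H\,dv=1$, this means the pointed $\mathcal W$-entropy is $0$ on that interval. Transferring via (\ref{7.2*}) to Corollary \ref{c5.4} gives $\mathcal W_f\equiv0$, hence $\boldsymbol\mu=0$ by Corollary \ref{c5.5}, since $\mathcal W_f$ is monotone and tends to $\boldsymbol\mu$. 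By Li--Wang, $\boldsymbol\mu=0$ characterizes the flat Gaussian shrinker. An alternative route is to use $\mathcal N_f(x,t)=0$ for all $t$, again by monotonicity, which forces $\boldsymbol\mu=0$ in the same way.

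\textbf{Main obstacle.} I expect the hardest part to be Step 2: justifying the strong maximum principle for $\tilde H$ when $\ell$ is only Lipschitz and the manifold is noncompact. I would first try working with $\ell$ in the barrier/distribution sense, as Perelman does, and controlling growth at infinity using the quadratic growth $\ell\lesssim d^2$ and the Gaussian bounds already established.
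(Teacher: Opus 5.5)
Your Step 1 is correct and matches the paper.

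Your Step 2 is a legitimate alternative route to $L\equiv H$, where $L=(4\pi\tau)^{-n/2}e^{-\ell}$. The paper avoids a pointwise strong maximum principle for the merely Lipschitz $L$. Instead it pairs $L$ with the forward kernel: it shows $I(r)=\int L\,H(\cdot,r;Y,S)\,dv_{g(r)}$ is nondecreasing with equal limits at both ends, hence $\mathcal D\equiv0$. Your version also works. The strong maximum principle is local, and for a locally Lipschitz nonnegative distributional supersolution it follows from Moser's weak Harnack inequality, so noncompactness is not really the obstacle. Note, however, what either version yields: $L\equiv H$ only for $0<\tau\le\tau_0$. In the $H_f$ picture, $H_f(x,\cdot,t)$ attains the lower bound only for $t\in(0,t_0]$. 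The zero of $H-L$ propagates toward the base point, not away from it.

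\textbf{The gap is in Step 3.} On that slab you correctly get the soliton identity and $\mathcal W_{(X,0)}(\rho)=0$. Hence $\mathcal W_f(x,t)=0$, but only for $t\in(0,t_0]$. The step ``$\mathcal W_f\equiv0$, hence $\boldsymbol\mu=0$ since $\mathcal W_f$ is monotone and tends to $\boldsymbol\mu$'' does not follow. $\mathcal W_f(x,\cdot)$ is nonincreasing in $t$, so vanishing on $(0,t_0]$ is compatible with $\mathcal W_f$ decreasing to some $\boldsymbol\mu<0$ for $t>t_0$. The alternative via $\mathcal N_f(x,t)=\frac1{e^t-1}\int_0^te^s\mathcal W_f(x,s)\,ds$ fails for the same reason, since it too vanishes only for $t\le t_0$.

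What is missing is an argument that extracts flatness from the soliton identity on the slab itself. The paper does this by differentiating $R$ along a minimizing $\mathcal L$-geodesic $\gamma$. The identity $\operatorname{Rc}+\nabla^2\ell=\frac1{2\tau}g_\tau$ gives $\frac{d}{d\tau}R(\gamma(\tau),\tau)=-\frac1\tau R$. So $\tau R$ is constant along $\gamma$ and tends to $0$ as $\tau\downarrow0$. Thus $R$ vanishes somewhere, and the strong maximum principle gives $R\equiv0$.

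There is also a shortcut using only what you already have. Write the induced flow as $g(t)=(1-t)\psi_{\sigma(t)}^*g$ with $\sigma(t)=\ln\frac1{1-t}$. It satisfies its own shrinker identity $\operatorname{Rc}_{g(t)}+\nabla^2(f\circ\psi_{\sigma(t)})=\frac1{2(1-t)}g(t)$. Subtracting this at $t=-\tau$ gives $\nabla^2(\ell-f\circ\psi_{\sigma(t)})=\frac{1}{2\tau(1+\tau)}g_\tau$. This is a positive constant multiple of a complete metric, so $g_\tau$ is flat $\mathbb R^n$ by Tashiro's theorem, and the shrinker is Gaussian.
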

\begin{proof}
The inequality follows by (\ref{8.1}). For the rigidity, although the method is standard (cf. \cite[Corollary 8.17, p.392]{CCG1}), we briefly explain it here. By Proposition \ref{p6.1} and the construction of \(\ell_f^x\), the equality in (\ref{8.1}) is equivalent to  
\begin{align*}
	H(X,T;Y,S)= (4\pi(T-S))^{-n/2} e^{-\ell_{(X,T)}(Y,S)}
\end{align*}
for the associated Ricci flow and fixed $X,T,Y,S$. For our concern, we may choose $T=0$. Set $L(z,r) := (4\pi(T-r))^{-n/2} e^{-\ell_{(X,T)}(z,r)}$.
Recall that \(H\geq L\) and $\mathcal D := (\partial_r+\Delta_{g(r)}-R_{g(r)})L \geq 0$
in the distributional sense. For \(S<r<T\), define
\begin{align*}
	I(r) := \int_{M^n} L(z,r)\,H(z,r;Y,S)\,dv_{g(r)}(z).
\end{align*}
To prove $I$ is constant, fix \(S<a<b<T\) and set $u(z,r):=H(z,r;Y,S)$. For $\Lambda>0$, 
let \(\phi^\Lambda\) be the standard spacetime shrinker cutoff function
constructed in \cite[Lemma 3]{LW}. Thus 
\begin{align*}
	\frac{\left|\nabla_{g(r)}\phi^\Lambda\right|_{g(r)}^2}{\phi^\Lambda}
	\leq\frac{C(n)}{\Lambda},\qquad\left|\left(\partial_r-\Delta_{g(r)}\right)\phi^\Lambda\right|
	\leq\frac{C(n)}{\Lambda}.
\end{align*}
Define $I_\Lambda(r):=\int_{M^n}\phi^\Lambda(z,r)L(z,r)u(z,r)\,dv_{g(r)}(z)$.
Since $	\partial_rdv_{g(r)}=-R_{g(r)}\,dv_{g(r)}$ and $\left(\partial_r-\Delta_{g(r)}\right)u=0$,
compactly supported integration by parts gives
\begin{align*}
	&I_\Lambda(b)-I_\Lambda(a)\\
	&=\int_{M^n\times(a,b)}\phi^\Lambda u\,d\mathcal D
	+\int_a^b\int_{M^n}L\left[u\left(\partial_r-\Delta_{g(r)}\right)\phi^\Lambda-
	2\left\langle\nabla_{g(r)}u,\nabla_{g(r)}\phi^\Lambda\right\rangle_{g(r)}\right]dv_{g(r)}\,dr.
\end{align*}
On the compact time interval \([a,b]\), the heat kernel bounds and the standard energy estimate (cf. \cite[Theorem 7, 15, 16 and estimate (195)]{LW}) give
\begin{align*}
	\sup_{M^n\times[a,b]}L<\infty,
	\quad
	\int_a^b\int_{M^n}
	\left|\nabla_{g(r)}u\right|_{g(r)}^2
	\,dv_{g(r)}\,dr<\infty,
	\quad
	\int_{M^n}L(\cdot,r)\,dv_{g(r)}\leq1.
\end{align*}
The two cutoff-error terms are therefore respectively \(O(\Lambda^{-1})\) and \(O(\Lambda^{-1/2})\). Setting \(\Lambda\to\infty\) gives
\begin{align*}
	I(b)-I(a)=\int_{M^n\times(a,b)}u\,d\mathcal D\geq0.
\end{align*}
Moreover, we have $\lim_{r\downarrow S} I(r) = L(Y,S)$ and $\lim_{r\uparrow T} I(r) = H(X,T;Y,S)$.
Since these two limits agree by assumption, then \(I\) is constant. As the forward heat kernel is strictly positive and \(\mathcal D\) is a nonnegative distribution, it follows that $\mathcal D\equiv0$ on $M^n\times(S,T)$.
Hence \(L\) is a smooth solution of the conjugate heat equation by parabolic regularity. Its total mass is constant and tends to \(1\) as \(r\uparrow T\). Stochastic completeness also gives \(\int_{M^n} H(X,T;\cdot,\cdot)\,dv_{g(r)}=1\). Since \(0\leq L\leq H(X,T;\cdot,\cdot)\), we conclude that $L\equiv H(X,T;\cdot,\cdot)$ on $M^n\times(S,T)$.
Consequently, the reduced volume based at \((X,T)\) is identically equal to \(1\) for \(0<T-r<T-S\).

Write $\tau := T-r, g_\tau := g(T-\tau) = g(r)$
and set
\begin{align*}
	H_{X,T}(z,\tau)&:= H(X,T;z,T-\tau)= (4\pi\tau)^{-n/2} e^{-b_{X,T}(z,\tau)}.
\end{align*}
Since \(H= L\), we have $	b_{X,T}(z,\tau) = \ell_{X,T}(z,\tau)$.
The conjugate heat equation $\partial_\tau H_{X,T}=\Delta_{g_\tau} H_{X,T} - R_{g_\tau} H_{X,T}$
gives
\begin{align*}
	\partial_\tau b_{X,T}= \Delta_{g_\tau} b_{X,T}-\left| \nabla_{g_\tau} b_{X,T} \right|_{g_\tau}^2
	+ R_{g_\tau}- \frac{n}{2\tau}.
\end{align*}
On the other hand, the Hamilton–Jacobi identity for the reduced distance gives
\begin{align*}
	2\,\partial_\tau \ell_{X,T}+ \left| \nabla_{g_\tau} \ell_{X,T} \right|_{g_\tau}^2
	- R_{g_\tau}+ \frac{\ell_{X,T}}{\tau}= 0.
\end{align*}
Combining these two equations and $b_{X,T} = \ell_{X,T}$, we obtain
\begin{align*}
	P:=\tau\left(2\Delta_{g_\tau} b_{X,T}- \left| \nabla_{g_\tau} b_{X,T} \right|_{g_\tau}^2
	+ R_{g_\tau}\right)+ b_{X,T}- n= 0.
\end{align*}
Thus Perelman's differential Harnack density $v_{X,T}(z,\tau):=PK_{X,T}(z,\tau)=0$. Moreover,
Perelman's differential Harnack identity (cf.\cite[Theorem 9.1]{P}, see also Li-Wang\cite[Equation (101)]{LW}) reads
\begin{align*}
		\left( \partial_\tau - \Delta_{g_\tau} + R_{g_\tau} \right) v_{X,T}= -2\tau\,\left| \operatorname{Rc}_{g_\tau}+ \operatorname{Hess}_{g_\tau} b_{X,T}
		- \frac{1}{2\tau} g_\tau\right|_{g_\tau}^{2}K_{X,T}.
\end{align*}
Since \(v_{X,T}\equiv 0\) and \(K_{X,T} > 0\), it follows that for every \(0 < \tau < T-S\),
\begin{align}
	\operatorname{Rc}_{g_\tau}+ \operatorname{Hess}_{g_\tau} b_{X,T}(\cdot,\tau)
	= \frac{1}{2\tau} g_\tau\label{8.2*}
\end{align}
A conventional way to prove $R\equiv0$ is following  Yokota's $\mathcal L$-geodesic argument for the equality case of the reduced-volume monotonicity( cf. \cite{TY1, TY}). Here we give a more direct method.

Fix \(\tau_0\in(0,T-S)\), and choose a regular minimizing
\(\mathcal L\)-geodesic
\begin{align*}
	\gamma:[0,\tau_0]\longrightarrow M^n,\quad\gamma(0)=X.
\end{align*}
Set \(q:=\gamma(\tau_0)\). Since \(b_{X,T}=\ell_{X,T}\), the first
variation formula gives $\gamma'(\tau)=\nabla_{g_\tau}b_{X,T}\left(\gamma(\tau),\tau\right)$.
For each fixed \(\tau\), equation~\eqref{8.2*} implies
\begin{align*}
	d R_{g_\tau}=2Rc_{g_\tau}(\nabla_{g_\tau} b_{X,T},\cdot),\quad
	\Delta_{b_{X,T}}R_{g_\tau}=\frac1{\tau}R_{g_\tau}-2\left|Rc_{g_\tau}\right|_{g_\tau}^2,
\end{align*}
where $\Delta_{b_{X,T}}:=\Delta_{g_\tau}-\left\langle\nabla_{g_\tau} b_{X,T},\nabla_{g_\tau}\cdot\right\rangle_{g_\tau}$.
Since \(g_\tau=g(T-\tau)\), the scalar curvature satisfies
\begin{align*}
	\partial_\tau R_{g_\tau}=-\Delta_{g_\tau} R_{g_\tau}-2\left|Rc_{g_\tau}\right|_{g_\tau}^2.
\end{align*}
Consequently,
\begin{align*}
	\frac{d}{d\tau}
	R_{g_\tau}\left(\gamma(\tau),\tau\right)=
	\partial_\tau R_{g_\tau}+\left\langle\nabla_{g_\tau} R_{g_\tau},\gamma'(\tau)\right\rangle_{g_\tau}
	=-\Delta_{b_{X,T}}R_{g_\tau}-2\left|Rc_{g_\tau}\right|_{g_\tau}^2=-\frac1{\tau}
	R_{g_\tau}\left(\gamma(\tau),\tau\right).
\end{align*}
It follows that $\tau R_{g_\tau}\left(\gamma(\tau),\tau\right)$ is constant in \(\tau\). As \(\tau\downarrow0\), one has
\(\gamma(\tau)\to X\), and the smoothness of the Ricci flow near
\((X,T)\) implies that
\(R_{g_\tau}(\gamma(\tau),\tau)\) remains bounded. Therefore $R_{g_{\tau_0}}(q)=0$, then the strong maximum principle
implies \(R_g\equiv0\), and hence
\((M^n,g,f)\) is the Gaussian shrinker.
\end{proof}

Based on the representation (\ref{8.1}) of heat kernel lower bound, we are able to give a rougher but more specific lower bound. Put $\rho(t):=\sqrt{e^t-1}$ and $\theta(t):=\arctan\sqrt{e^t-1}$, then we have
\begin{proposition}
	For every \(0<\eta<1\) and every \(t>0\),
	\begin{align}	\label{eq:17}
		\bar\ell^x_f(y,t)
		\leq\left[\frac{1}{4\rho(t)\theta(t)}+\frac{(1-\eta)\theta(t)}{12\eta\,\rho(t)}\right]d^2(x,y)-
		\frac{(1-\eta)\theta(t)}{2\rho(t)}\left(f(x)+f(y)\right).
	\end{align}
	Moreover, for every \(0<\varepsilon<1\), there exists a \(t_\varepsilon>0\), depending only on \(\varepsilon\), such that
	\begin{align}	\label{eq:23}
		\bar\ell^x_f(y,t)\leq	\frac{d^2(x,y)}{(4-\varepsilon)t}-\frac{1-\varepsilon}{2}\bigl(f(x)+f(y)\bigr)
	\end{align}
	for all \(x,y\in M^n\) and \(0<t\le t_\varepsilon\). 
\end{proposition}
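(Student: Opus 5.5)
The plan is to bound the infimum defining $\bar\ell^x_f(y,t)=\ell^x_f(y,t)-f(y)$ by evaluating the weighted $\mathcal L$-functional on one explicit test curve: a minimizing geodesic from $x$ to $y$, with a time reparametrization that makes the kinetic term exactly Euclidean. By definition,
\begin{align*}
	\bar\ell^x_f(y,t)\leq\frac{1}{2\rho(t)}\int_0^t\left\{\rho(s)|\gamma'(s)|^2-\frac{f(\gamma(s))}{\rho(s)}\right\}ds
\end{align*}
for every curve $\gamma$ with $\gamma(0)=x$ and $\gamma(t)=y$, where $\rho(s)=\sqrt{e^s-1}$.

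\textbf{Step 1: change of variable.} Put $\vartheta=\theta(s)=\arctan\rho(s)$. Then $d\vartheta/ds=\frac{1}{2\rho(s)}$, so $ds/\rho(s)=2\,d\vartheta$. Writing $\dot\gamma=d\gamma/d\vartheta$, we also get $\rho|\gamma'|^2ds=\frac12|\dot\gamma|^2d\vartheta$. Hence the bound becomes
\begin{align*}
	\bar\ell^x_f(y,t)\leq\frac{1}{2\rho(t)}\left[\frac12\int_0^{\theta(t)}|\dot\gamma|^2d\vartheta-2\int_0^{\theta(t)}f(\gamma)\,d\vartheta\right].
\end{align*}

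\textbf{Step 2: test curve.} Take $\gamma(\vartheta)=\sigma(\vartheta/\theta(t))$, where $\sigma:[0,1]\to M^n$ is a unit-interval, constant-speed minimizing geodesic from $x$ to $y$. The kinetic part then contributes exactly $\frac{d^2(x,y)}{4\rho(t)\theta(t)}$. The potential part contributes
\begin{align*}
	-\frac{\theta(t)}{\rho(t)}\int_0^1 f(\sigma(\lambda))\,d\lambda .
\end{align*}
So \eqref{eq:17} reduces to the averaged lower bound
\begin{align*}
	\int_0^1 f(\sigma(\lambda))\,d\lambda\geq\frac{1-\eta}{2}\bigl(f(x)+f(y)\bigr)-\frac{1-\eta}{12\eta}d^2(x,y).
\end{align*}
Since $\int_0^1\lambda(1-\lambda)\,d\lambda=\frac16$, it suffices to prove the pointwise bound
\begin{align*}
	f(\sigma(\lambda))\geq(1-\eta)\bigl[(1-\lambda)f(x)+\lambda f(y)\bigr]-\frac{1-\eta}{2\eta}\lambda(1-\lambda)d^2(x,y).
\end{align*}

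\textbf{Step 3: the pointwise bound (main obstacle).} The Hessian $\mathrm{Hess}\,f=\frac g2-Rc$ is not controlled, so second-order convexity along $\sigma$ is unavailable. I would instead rely only on the first-order bound $|\nabla\sqrt f|\leq\frac12$, which follows from $|\nabla f|^2\leq f$.

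This bound gives $\sqrt f(\sigma(\lambda))\geq\sqrt{f(x)}-\frac{\lambda d}{2}$ and $\sqrt f(\sigma(\lambda))\geq\sqrt{f(y)}-\frac{(1-\lambda)d}{2}$. I would combine these two lower bounds with weights $1-\lambda$ and $\lambda$, square (after handling the case where the right-hand side is negative separately), and absorb the cross terms by Young's inequality with parameter $\eta$.

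The delicate point is that $\bigl((1-\lambda)\sqrt{f(x)}+\lambda\sqrt{f(y)}\bigr)^2$ lies \emph{below} the convex combination $(1-\lambda)f(x)+\lambda f(y)$. If the naive combination loses too much, I would first try taking the maximum of the two one-sided bounds rather than their average. I would also try splitting $[0,1]$ at the point where the two bounds cross, and integrating in $\lambda$ directly instead of proving a pointwise inequality, since only the integral is needed. This step is where the constant $\frac{1}{12\eta}$ must come out.

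\textbf{Step 4: short-time bound \eqref{eq:23}.} As $t\to0$ we have $\rho(t)\sim\sqrt t$, $\theta(t)\sim\sqrt t$, $\rho(t)\theta(t)=t(1+O(t))$ and $\theta(t)/\rho(t)\to1$. Choose $\eta=\eta(\varepsilon)$ with $1-\eta>1-\varepsilon/2$. Then choose $t_\varepsilon$ small enough that three things hold:
\begin{itemize}
	\item the $d^2$ coefficient $\frac{1}{4\rho\theta}+\frac{(1-\eta)\theta}{12\eta\rho}$ is at most $\frac{1}{(4-\varepsilon)t}$, since the second summand is bounded while the first is $\frac{1}{4t}(1+O(t))$;
	\item $(1-\eta)\frac{\theta}{\rho}\geq1-\varepsilon$;
	\item all of the above hold uniformly for $0<t\leq t_\varepsilon$.
\end{itemize}
Because $f\geq0$, this gives \eqref{eq:23} for all $x,y\in M^n$.
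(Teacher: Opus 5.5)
Your Steps 1, 2 and 4 are the paper's argument: the same $\arctan\sqrt{e^s-1}$ reparametrization, the same constant-speed minimizing geodesic as test curve (giving $\frac{d^2(x,y)}{4\rho\theta}-\frac{\theta}{\rho}\int_0^1 f(\sigma)$), and the same small-$t$ expansions with $\eta$ of order $\varepsilon$. The only part you leave unfinished is Step 3, and it has no real obstacle.

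\textbf{Closing your weighted route.} The deficit you worry about is controlled by the same $\frac12$-Lipschitz bound on $\sqrt f$, applied between the endpoints $x$ and $y$. Write $a=\sqrt{f(x)}$, $b=\sqrt{f(y)}$, $d=d(x,y)$ and $c=(1-\lambda)a+\lambda b$.
\begin{itemize}
\item Averaging your two one-sided bounds gives $\sqrt{f(\sigma(\lambda))}\ge\bigl(c-\lambda(1-\lambda)d\bigr)_+$.
\item The inequality $(r-z)_+^2\ge(1-\eta)r^2-\frac{1-\eta}{\eta}z^2$ holds for all $r,z\ge0$, so no separate case split is needed.
\item Expand $c^2=(1-\lambda)a^2+\lambda b^2-\lambda(1-\lambda)(a-b)^2$ and use $|a-b|\le d/2$.
\item Use $\lambda^2(1-\lambda)^2\le\frac14\lambda(1-\lambda)$.
\end{itemize}
Together these give
\begin{align*}
f(\sigma(\lambda))\ge(1-\eta)\bigl[(1-\lambda)f(x)+\lambda f(y)\bigr]-\frac{(1-\eta)(1+\eta)}{4\eta}\lambda(1-\lambda)d^2 .
\end{align*}
Since $1+\eta\le 2$, this implies your pointwise bound. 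After integration it even gives the slightly better constant $\frac{(1-\eta)(1+\eta)}{24\eta}$ in place of $\frac{1-\eta}{12\eta}$.

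\textbf{What the paper does instead.} The paper uses your ``maximum'' fallback. It bounds $f(\sigma(q))\ge\frac12\bigl[(a-\frac{qd}{2})_+^2+(b-\frac{(1-q)d}{2})_+^2\bigr]$ and applies the same Young-type inequality to each term separately. This yields $f(\sigma(q))\ge\frac{1-\eta}{2}\bigl(f(x)+f(y)\bigr)-\frac{1-\eta}{8\eta}\bigl(q^2+(1-q)^2\bigr)d^2$. Integrating with $\int_0^1\bigl(q^2+(1-q)^2\bigr)dq=\frac23$ lands exactly on $\frac{1-\eta}{12\eta}$. Because it never forms a $\lambda$-weighted combination, it avoids the convex-combination deficit altogether.
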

\begin{proof}
	Fix \(t>0\) and abbreviate \(\rho=\sqrt{e^t-1}\), \(\theta=\arctan\rho\). Define
	\begin{align*}
		q=q(s):=\frac{\arctan\sqrt{e^s-1}}{\theta},\quad 0\le s\le t.
	\end{align*}
	Then \(q(0)=0,\; q(t)=1\). A direct differentiation gives
	\begin{align*}
		\frac{d}{ds}\arctan\sqrt{e^s-1}=\frac{1}{2\sqrt{e^s-1}},
	\end{align*}
	and hence
	\begin{align*}
		\frac{dq}{ds}=\frac{1}{2\theta\sqrt{e^s-1}}.
	\end{align*}
	Let \(\gamma:[0,t]\to M^n\) be an a.e. smooth curve from \(x\) to \(y\), and put \(\sigma(q):=\gamma(s(q))\). Here $s(q)$ means the inverse function of $q(s)$. Then $ \sigma'(q)=\gamma'(s(q))s'(q)$, which implies
	\begin{align*}
		\gamma'(s)= \sigma'(q)\frac{dq}{ds}= \frac{\sigma'(q)}{2\theta\sqrt{e^s-1}}.
	\end{align*}
	Since \(ds = 2\theta\sqrt{e^s-1}\,dq\), then
	\begin{align*}
		\sqrt{e^s-1}\,|\dot\gamma(s)|^2\,ds
		&= \sqrt{e^s-1}\frac{|\sigma'(q)|^2}{4\theta^2(e^s-1)}\,
		2\theta\sqrt{e^s-1}\,dq= \frac{1}{2\theta}\left|\sigma'(q)\right|^2dq,
	\end{align*}
	and
	\begin{align*}
		-\frac{f(\gamma(s))}{\sqrt{e^s-1}}ds= -2\theta f(\sigma(q))dq.
	\end{align*}
	Therefore
	\begin{align*}
		\int_0^t\sqrt{e^s-1}\left(|\gamma'(s)|^2-\frac{f(\gamma(s))}{e^s-1}\right)ds=\frac{1}{2\theta}\int_0^1\left(\left|\sigma'(q)\right|^2-4\theta^2f(\sigma(q))
		\right)dq.
	\end{align*}
	Dividing by \(2\rho\), we obtain
	\begin{align}\label{eq:A7}
		\bar\ell^x_f(y,t)=\frac{1}{4\rho\theta}\inf_{\substack{\sigma(0)=x\\\sigma(1)=y}}\int_0^1\left(|\sigma'|^2-4\theta^2f(\sigma)\right)dq.
	\end{align}
	There exists a minimizing geodesic \(\sigma:[0,1]\to M^n\) with \(\sigma(0)=x,\;\sigma(1)=y\), parametrized at constant speed \(|\sigma'|\equiv d(x,y)\). Using this curve in \eqref{eq:A7} gives
	\begin{align}\label{eq:A8}
		\bar\ell^x_f(y,t)\leq\frac{d^2(x,y)}{4\rho\theta}- \frac{\theta}{\rho}\int_0^1f(\sigma(q))dq.
	\end{align}
	
	Since \(|\nabla f|^2\le f\), it follows that \(\sqrt f\) is \(1/2\)-Lipschitz, then
	along the minimizing geodesic, \(d(x,\sigma(q))=q\,d(x,y)\) and \(d\left(y,\sigma(q)\right)=(1-q)d(x,y)\). Thus
	\begin{align*}
		\sqrt{f(\sigma(q))}
		\geq\left(\sqrt{f(x)}-\frac{q\,d(x,y)}2\right)_+,\quad
		\sqrt{f(\sigma(q))}
		\geq
		\left(\sqrt{f(y)}-\frac{(1-q)d(x,y)}2\right)_+,
	\end{align*}
	and consequently,
	\begin{align*}
		f(\sigma(q))\geq\frac12\left[\left(\sqrt{f(x)}-\frac{q\,d(x,y)}2\right)_+^2
		+ \left(\sqrt{f(y)}-\frac{(1-q)d(x,y)}2\right)_+^2\right].
	\end{align*}
	For every \(r,z\ge0\) and every \(0<\eta<1\), one has
	\begin{align}	\label{eq:A13}
		(r-z)_+^2\geq(1-\eta)r^2 - \frac{1-\eta}{\eta}z^2.
	\end{align}
	Apply \eqref{eq:A13} to the two terms above to obtain
	\begin{align*}
		f(\sigma(q))\geq\frac{1-\eta}{2}\left(f(x)+f(y)\right)- \frac{1-\eta}{8\eta}\left(q^2+(1-q)^2\right)d^2(x,y).
	\end{align*}
	Integrating and using \(\int_0^1 q^2\,dq = \int_0^1(1-q)^2\,dq = \frac13\) gives
	\begin{align}\label{eq:A15}
		\int_0^1f(\sigma(q))\,dq\geq\frac{1-\eta}{2}\left(f(x)+f(y)\right)- \frac{1-\eta}{12\eta}\,d^2(x,y).
	\end{align}
	Substituting \eqref{eq:A15} into \eqref{eq:A8}yields exactly \eqref{eq:17}. 
	
	To prove $(\ref{eq:23})$, define
	\begin{align*}
		A_\eta(t):= \frac{1}{4\rho\theta} + \frac{(1-\eta)\theta}{12\eta\,\rho},\quad B_\eta(t):= \frac{(1-\eta)\theta}{2\rho}.
	\end{align*}
	Then (\ref{eq:17}) reads $\bar\ell^x_f(y,t) \leq A_\eta(t)d^2(x,y) - B_\eta(t)\left(f(x)+f(y)\right)$. As \(t\downarrow0\),
	\begin{align*}
		\rho(t)\theta(t)= t+\frac{t^2}{6}+O\left(t^3\right),\quad
		\frac{\theta(t)}{\rho(t)}= 1-\frac t3+O\left(t^2\right),
	\end{align*}
	thus
	\begin{align*}
		\lim_{t\downarrow0}tA_\eta(t)=\frac14,\quad
		\lim_{t\downarrow0}B_\eta(t)=\frac{1-\eta}{2}.
	\end{align*}
	Fix \(0<\varepsilon<1\) and choose \(\eta=\frac{\varepsilon}{2}\), then \(\frac14<\frac{1}{4-\varepsilon}\) and \(\frac{1-\eta}{2} > \frac{1-\varepsilon}{2}\). By continuity, there exists a \(t_\varepsilon>0\) such that for \(0<t\le t_\varepsilon\),
	\begin{align*}
		A_\eta(t) \le \frac{1}{(4-\varepsilon)t},\quad
		B_\eta(t) \ge \frac{1-\varepsilon}{2}.
	\end{align*}
	Since \(f(x)+f(y)\ge0\), \eqref{eq:17} gives \eqref{eq:23}.
\end{proof}

\section{Differential Harnack inequality}
Recall 
\begin{align*}
	w_f(x,y,t)=\tau\left(n-f(y)-\frac{2\Delta_{f,y}H_f}{H_f}+\frac{\left|\nabla_y H_f \right|^2}{H_f^2}\right)+f(y)-\ln H_f-\frac{n}{2}\ln4\pi\tau-n, \tau=1-e^{-t},
\end{align*}
and
$$\mathcal{W}_f(x,t)=\mathcal{W}\left(g,\sqrt{H_f}e^{-\frac{f(y)}{2}}, 1-e^{-t}\right)=\int_{M^n}w_f dv_{H_f}(y).$$
Due to the arguments in section \ref{sec-N}, $w_f$ is just the diffeomorphic counterpart of
\begin{align*}
	w_{x,t}(y,s)=(t-s)\left(2\Delta_{g_s} b_{x,t}(y,s)-\left|\nabla_{g_s} b_{x,t}(y,s)\right|_{g_s}^{2}
	+R_{g_s}(y)\right)+b_{x,t}(y,s)-n.
\end{align*}
By Li-Wang \cite[Theorem 21]{LW}, we know $w_{x,t}\leq0$, and hence $w_f\leq0$.

Let $\tilde{H}_f(x,y,t)=H_f(x,y,t)e^{-f(y)}=\frac{1}{\left[4\pi(1-e^{-t})\right]^{\frac{n}{2}}}e^{-b_f(x,y,t)}$, then $H_f(x,y,t)=\tilde{H}_fe^f$. We compute
\begin{align*}
	\Delta_fH_f&=\Delta_f\left(\tilde{H}_fe^f\right)\\
	&=\tilde{H}_fe^f\left(\Delta_ff+\left|\nabla f\right|^2\right)+e^f\Delta_f\tilde{H}_f+2e^f\left\langle \nabla \tilde{H}_f,\nabla f\right\rangle\\
	&=\tilde{H}_fe^f\left(\frac{n}{2}-R\right)+e^f\Delta\tilde{H}_f+e^f\left\langle \nabla \tilde{H}_f,\nabla f\right\rangle,
\end{align*}
and
\begin{align*}
	\left|\nabla H_f\right|^2=\left|\nabla \left(\tilde{H}_fe^f\right)\right|^2=e^{2f}\left(\left|\nabla \tilde{H}_f\right|^2+\tilde{H}_f^2\left|\nabla f\right|^2\right)+2\tilde{H}_fe^{2f}\left\langle \nabla \tilde{H}_f,\nabla f\right\rangle.
\end{align*}
Therefore the differential Harnack inequality $w_f\leq0$ implies
\begin{align*}
	&w_f(x,y,t)\\
	&=\left(1-e^{-t}\right)\left(n-f(y)-\frac{2\Delta_{f,y}H_f}{H_f}+\frac{\left|\nabla_y H_f \right|^2}{H_f^2}\right)+f(y)-\ln H_f-\frac{n}{2}\ln4\pi\left(1-e^{-t}\right)-n\\
	&=\left(1-e^{-t}\right)\left(R(y)-\frac{2\Delta_y\tilde{H}_f}{\tilde{H}_f}+\frac{\left|\nabla_y \tilde{H}_f \right|^2}{\tilde{H}_f^2}\right)-\ln \tilde{H}_f-\frac{n}{2}\ln4\pi\left(1-e^{-t}\right)-n\\
	&=\left(1-e^{-t}\right)\left(R(y)-\frac{2\Delta_y\tilde{H}_f}{\tilde{H}_f}+\frac{\left|\nabla_y \tilde{H}_f \right|^2}{\tilde{H}_f^2}\right)+b_f-n\leq0,
\end{align*}
and this gives
\begin{align}\label{11.1}
	R(y)\leq \frac{n-b_f}{1-e^{-t}}+\frac{2\Delta_y\tilde{H}_f}{\tilde{H}_f}-\frac{\left|\nabla_y \tilde{H}_f \right|^2}{\tilde{H}_f^2}.
\end{align}
In view of Theorem \ref{t3.8}, for fixed $(x,t)$, $\tilde{H}_f(x,y,t)=H_f(x,y,t)e^{-f(y)}\longrightarrow0$ uniformly as $y\longrightarrow\infty$, therefore there exists a maximum point $y_{x,t}$. Moreover, by Proposition \ref{p6.2}, we have $b_f(x,y_{x,t},t)\geq\boldsymbol{\mu}$. Then by (\ref{11.1}), we have
\begin{align}\label{11.2}
	R(y_{x,t})\leq \frac{n-\boldsymbol{\mu}}{1-e^{-t}}.
\end{align}
On the other hand, by Proposition \ref{p7.6} with appropriate $\beta=\beta(\varepsilon)$, we have
\begin{align}
	&H_f(x,y_{x,t},t)e^{-f(y_{x,t})}\nonumber\\
	&=H_\phi(x,y_{x,t},t)\exp\left(\frac{f(x)+f(y_{x,t})}{2}-f(y_{x,t})\right)\nonumber\\
	&\leq C(n,\varepsilon)e^{-\boldsymbol{\mu}}\Gamma(t) \exp \left(\frac{f(x)+f(y_{x,t})}{2} \left(1-\frac{\tanh \frac{t}{4}}{2}\right)-f(y_{x,t})-\frac{d^2(x,y_{x,t})}{(8+\varepsilon)t}\right).\label{9.3}
\end{align}

As before, write $H_f(x,y,t)e^{-f(y)}=\frac{e^{-b_f(x,y,t)}}{\left(4\pi\left(1-e^{-t}\right)\right)^{\frac{n}{2}}}$, recall by the definition, we have
\begin{align*}
	\mathcal{N}_f(x,t)&=\int_{M^n}b_f(x,y,t)H_f(x,y,t)e^{-f(y)}dv(y)-\frac{n}{2}\\
	&\geq \min{b_f(x,y,t)}\int_{M^n}H_f(x,y,t)e^{-f(y)}dv(y)-\frac{n}{2}\\
	&\geq \min{b_f(x,y,t)}-\frac{n}{2}.
\end{align*}
Since $\mathcal{N}_f\leq0$, the above estimate implies
\begin{align*}
	\frac{e^{-\frac{n}{2}}}{\left(4\pi\left(1-e^{-t}\right)\right)^{\frac{n}{2}}}\leq H_f(x,y_{x,t},t)e^{-f(y_{x,t})}.
\end{align*}
Comparing this estimate with (\ref{9.3}) implies
\begin{align*}
	\boldsymbol{\mu}-C(n,\varepsilon)\leq\frac{f(x)+f(y_{x,t})}{2}\left(1-\frac{\tanh \frac{t}{4}}{2}\right)-f(y_{x,t})-\frac{d^2(x,y_{x,t})}{(8+\varepsilon)t},
\end{align*}
which gives
\begin{align}
	d^2(x,y_{x,t})&\leq(8+\varepsilon)t\left(\left(\frac{2-\tanh \frac{t}{4}}{4}\right) f(x)-\frac{2+\tanh\frac{t}{4}}{4}f(y_{x,t})-\boldsymbol{\mu}+C(n,\varepsilon)\right)\label{9.4*}\\
	&\leq (8+\varepsilon)t\left(\left(\frac{2-\tanh \frac{t}{4}}{4}\right) f(x)-\boldsymbol{\mu}+C(n,\varepsilon)\right).\nonumber
\end{align}	
For example, if we choose $t'=t'(n,\varepsilon,\boldsymbol{\mu})$ small enough, then
\begin{align*}
	d^2\left(x,y_{x,t'}\right)\leq \frac{f(x)}{100}+C(n,\varepsilon)-\boldsymbol{\mu},
\end{align*}
and by (\ref{11.2}), 
\begin{align*}
	R\left(y_{x,t'}\right)\leq \frac{n-\boldsymbol{\mu}}{1-e^{-t'}}\leq C(n,\varepsilon,\boldsymbol{\mu}).
\end{align*}	
This implies that for any $x\in M^n$, we can find a point with uniformly bounded scalar curvature within $\sqrt{f(x)/C}$ scale.

Especially, we have the following distance control of low scalar curvature points
\begin{proposition}
	Let \((M^{n},g,f)\) be a complete non-compact Ricci shrinker and fix \(\delta>0\). For \(F:=f(x)\) and set \(A=A(F)\) be any positive function satisfying
	\begin{align}
		A(F)\longrightarrow\infty,\quad A(F)=o(F).\label{eq:I-Acond}
	\end{align}
	Then, for all $x\in M^n $ satisfying $f(x)=F\geq \underline{F}=\underline{F}\left(n,\delta,\boldsymbol{\mu},A\right)$, there exists a point $y$ such that
	\begin{align}
		d(x,y)\leq C(n,\delta,\boldsymbol{\mu})\frac{A(F)}{\sqrt F},\quad
		R(y)\leq2(n-\boldsymbol{\mu})\frac{F}{A(F)}.\label{eq:I-main}
	\end{align}
	Consequently, \(R(y)=o(F)=o(f(x))\). In particular, the scale produced by this argument can be taken to be
	\begin{align*}
		r(F)=\frac{A(F)}{\sqrt F},\quad
		A(F)\to\infty
		\text{ arbitrarily slowly}.
	\end{align*}
\end{proposition}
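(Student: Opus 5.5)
The plan is to take $y:=y_{x,t}$, the maximum point of $\tilde H_f(x,\cdot,t)=H_f(x,\cdot,t)e^{-f(\cdot)}$ from the discussion above, and to choose the time $t$ as a function of $F$. The natural choice is
\[
t=t(F):=\frac{A(F)}{F}.
\]
Because $A(F)=o(F)$, this $t$ tends to $0$ as $F\to\infty$. Once $F\ge\underline F$, we therefore have $t\le 1$, so $\Gamma(t)=t^{-n/2}$, and $1-e^{-t}\ge t/2$.

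The curvature bound then comes from (\ref{11.2}) together with $1-e^{-t}\ge t/2$:
\[
R(y)\le\frac{n-\boldsymbol{\mu}}{1-e^{-t}}\le\frac{2(n-\boldsymbol{\mu})}{t}=2(n-\boldsymbol{\mu})\frac{F}{A(F)}.
\]
This is exactly the second inequality in (\ref{eq:I-main}). It gives $R(y)=o(F)$ because $A\to\infty$.

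For the distance bound I would redo the comparison leading to (\ref{9.4*}), this time keeping the $t$-dependence explicit. The lower bound on $\tilde H_f$ at $y$ and the upper bound (\ref{9.3}) both contain powers of $t$ (or of $1-e^{-t}$). Since $1-e^{-t}\le t$, the factor $(1-e^{-t})^{-n/2}$ on the left is at least $t^{-n/2}$, which matches $\Gamma(t)=t^{-n/2}$ on the right. These powers therefore cancel up to dimensional constants, and the constant is uniform in small $t$. Dropping the nonpositive term involving $f(y)$, and using $2-\tanh\frac t4\le 2$, gives
\[
d^2(x,y)\le(8+\delta)t\left(\frac{F}{2}-\boldsymbol{\mu}+C(n,\delta)\right)\le C(n,\delta,\boldsymbol{\mu})\,tF=C\,A(F).
\]
Here the term $-\boldsymbol{\mu}+C$ is absorbed into $F$ once $F\ge\underline F$, and $\varepsilon$ in (\ref{9.3}) is taken to be $\delta$. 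This yields $d(x,y)\le C\sqrt{A(F)}$.

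The main obstacle is that this is weaker than the claimed $C\,A(F)/\sqrt F$ when $A\ll F$; indeed $A/\sqrt F\ge\sqrt A$ if and only if $A\ge F$. So the crude route does not close, and two repairs are available. The first is to replace $A$ in the proof by $\tilde A:=A^2/F$, provided it still tends to infinity and is $o(F)$. Running the argument with $t=\tilde A/F$ gives $d(x,y)\le C\sqrt{\tilde A}=C\,A/\sqrt F$. The curvature bound, however, becomes $2(n-\boldsymbol{\mu})F^2/A^2$, which is $o(F)$ only if $A^2/F\to\infty$. The second repair, which is the one I would try first, is to sharpen the distance step. I would not discard $-\frac{2+\tanh(t/4)}{4}f(y)$. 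Instead I would combine it with $\sqrt f$ being $\frac12$-Lipschitz, which gives $f(y)\ge(\sqrt F-d/2)_+^2$. The $F$-terms then nearly cancel: $\frac F2-\frac{f(y)}{2}\lesssim \frac{\sqrt F\,d}{2}$ up to $O(tF)$ corrections. This yields $d^2\lesssim t\sqrt F\,d+tC$, hence
\[
d(x,y)\lesssim t\sqrt F=\frac{A(F)}{\sqrt F},
\]
which is the claimed scale. The delicate point is tracking the $t\,f(x)$ and $\tanh\frac t4$ error terms, which are of size $O(t^2F)$. Their contribution to $d^2$ is of order $t^2F=A^2/F$, consistent with the target. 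Once $F\ge\underline F(n,\delta,\boldsymbol{\mu},A)$, all remaining constants are absorbed.
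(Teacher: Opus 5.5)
Your proposal is correct and, in its second repair, is essentially the paper's argument. The ingredients match: the choice $t=A(F)/F$, the curvature bound from (\ref{11.2}) with $1-e^{-t}\ge t/2$, the crude bound $d\le\sqrt{(8+\delta)A(F)}<2\sqrt F$, and then the $\frac12$-Lipschitz property of $\sqrt f$, which gives $d^2\le\frac{3(8+\delta)}{4}\,t\sqrt F\,d+C(n,\delta,\boldsymbol{\mu})\,t$ and hence $d\le C\,A(F)/\sqrt F$ (using $\sqrt{A}\le A$ for large $F$).

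Your first repair is unnecessary, and it fails when $A$ grows slowly. You also need not track any $\tanh\frac t4$ error terms: after substituting the lower bound for $f(y)$, the coefficient of $F$ is $\frac{2-\tanh\frac t4}{4}-\frac{2+\tanh\frac t4}{4}=-\frac12\tanh\frac t4\le0$, so that term can simply be dropped.
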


\begin{proof}
	Set \(K:=8+\delta\), \(B:=C_{0}(n,\delta)\left(1+|\boldsymbol{\mu}|\right)\), and
	\begin{align*}
		a(t):=\tanh\frac t4,\quad\alpha(t):=\frac{2-a(t)}4,\quad\beta(t):=\frac{2+a(t)}4,
	\end{align*}
	then by (\ref{9.4*}), for $G:=f(y_{x,t})$ and $r:=d(x,y_{x,t})$,
	\begin{align}
		r^{2}\leq Kt\left(\alpha(t)F-\beta(t)G+B\right).	\label{eq:I-r2}
	\end{align}
	Note that \(\alpha(t)-\beta(t)=-\frac12a(t)\le0\). Choose $t=t_F:=\frac{A(F)}F$,
	which tends to \(0\) by (\ref{eq:I-Acond}). Let \(z:=y_{x,t_F}\). For \(0<t\le1\), we have $	1-e^{-t}\ge\frac t2$, hence from (\ref{11.2})
	\begin{align}\label{9.9**}
		R(z)\le 2(n-\boldsymbol{\mu})\frac{F}{A(F)}.
	\end{align}
	Discarding \(-\beta G\) in (\ref{eq:I-r2}) and using \(B\le F/2\) for large \(F\) gives \(r\le\sqrt{KA(F)}\). Since \(A(F)=o(F)\), \(r<2\sqrt F\) for large \(F\).
	
	Using the \(1/2\)-Lipschitz property of \(\sqrt f\), we have $\sqrt G\ge \sqrt F-\frac r2$,
	and thus
	\begin{align*}
		G\ge F-\sqrt F\,r+\frac{r^{2}}4.
	\end{align*}
	Substituting this into (\ref{eq:I-r2}) and discarding non-positive terms yields
	\begin{align*}
		r^{2}\leq\frac{3K}{4}t_F\sqrt F\,r + KBt_F.
	\end{align*}
	Applying the elementary inequality \(r^{2}\le pr+q\Rightarrow r\leq p+\sqrt q\) with $p,q\geq0$ gives
	\begin{align*}
		r\leq\frac{3K}{4}t_F\sqrt F + \sqrt{KBt_F}= \frac{3K}{4}\frac{A(F)}{\sqrt F} + \sqrt{KB}\frac{\sqrt{A(F)}}{\sqrt F}.
	\end{align*}
	Since \(\sqrt{A(F)}\le A(F)\) for large \(F\), we obtain
	\begin{align}\label{9.9*}
		d(x,z)\le C\frac{A(F)}{\sqrt F},
	\end{align}
	where \(C=\frac{3(8+\delta)}4+\sqrt{(8+\delta)C_0(n,\delta)\left(1+|\boldsymbol{\mu}|\right)}\). This completes the proof of (\ref{eq:I-main}).
\end{proof}

As a direct corollary, we have

\begin{corollary}[\textbf{=Theorem \ref{t1.4'}}]
	Let $(M^{n},g,f)$ be a noncompact Ricci shrinker. Assume that $|\nabla R|=o\left(f^{3/2}\right)$ uniformly at infinity, i.e.
	\begin{align}
		\eta(L):=\sup_{\{z\in M^n:\,f(z)\ge L\}}\frac{|\nabla R|(z)}{f^{3/2}(z)}\longrightarrow0
		\quad\text{as}\quad L\longrightarrow\infty.\label{eq:II-eta}
	\end{align}
	Then
	\begin{align*}
		\frac{R(x)}{f(x)}\longrightarrow0\quad\text{as }f(x)\longrightarrow\infty.
	\end{align*}
\end{corollary}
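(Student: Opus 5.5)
The plan is to combine the preceding Proposition (distance control of low scalar curvature points) with a mean value estimate along a minimizing geodesic. Fix $\epsilon>0$. We want to show $R(x)\le \epsilon F$ whenever $F=f(x)$ is large enough.

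\textbf{Choosing the scale.} Set $A(F):=\sqrt{F}\,\min\{F^{1/4},\eta(F/4)^{-1/2}\}$, with the convention that $\eta^{-1/2}=\infty$ when $\eta=0$. Then $A(F)\to\infty$ and $A(F)/F\le F^{-1/4}\to0$, so $A=o(F)$. Applying the Proposition with this $A$ and a fixed $\delta$ (say $\delta=1$) gives, for $F\ge\underline F$, a point $y$ with
\begin{align*}
d(x,y)\le C\frac{A(F)}{\sqrt F}\le C\,\eta(F/4)^{-1/2},\qquad R(y)\le 2(n-\boldsymbol{\mu})\frac{F}{A(F)}.
\end{align*}
Since $F/A(F)=\sqrt F\max\{F^{-1/4},\eta(F/4)^{1/2}\}=o(F)$, we get $R(y)=o(F)$. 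We also have $d(x,y)\le CF^{1/4}=o(\sqrt F)$.

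\textbf{Integrating along the geodesic.} Let $\sigma$ be a minimizing geodesic from $y$ to $x$. By the $1/2$-Lipschitz property of $\sqrt f$, every point $z$ on $\sigma$ satisfies
\[
\sqrt{f(z)}\ge\sqrt F-\tfrac12 d(x,y)\ge \sqrt F/2
\]
for large $F$, so $f(z)\ge F/4$. For the same reason $f(z)\le 4F$. Hence $|\nabla R|(z)\le \eta(F/4)f(z)^{3/2}\le 8\,\eta(F/4)F^{3/2}$ along $\sigma$. Integrating gives
\begin{align*}
R(x)\le R(y)+8\eta(F/4)F^{3/2}d(x,y)\le R(y)+8C\,\eta(F/4)F^{3/2}\cdot\frac{A(F)}{\sqrt F}.
\end{align*}
This last bound is too crude as written, because $A(F)$ may be as large as $F^{3/4}$. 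So the choice of $A$ has to balance the two terms, and this balancing is the main obstacle.

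\textbf{Balancing the two terms.} The two contributions to $R(x)$ are
\[
R(y)\lesssim F/A \quad\text{and}\quad \eta F\,A\quad(\text{since } F^{3/2}\cdot A/\sqrt F=FA).
\]
Both are $o(F)$ exactly when $A\to\infty$ and $\eta A\to0$. So I take
\[
A(F):=\min\{F^{1/2},\ \eta(F/4)^{-1/2}\}.
\]
If $\eta$ vanishes identically at large $L$, take $A=F^{1/2}$. With this choice:
\begin{itemize}
\item $A\to\infty$ and $A=o(F)$;
\item $d(x,y)\le CA/\sqrt F\le C$, so the comparison $F/4\le f\le 4F$ along $\sigma$ still holds for large $F$;
\item $\eta(F/4)\,A\le \eta(F/4)^{1/2}\to0$.
\end{itemize}
Therefore
\begin{align*}
\frac{R(x)}{F}\le \frac{2(n-\boldsymbol{\mu})}{A(F)}+8C\,\eta(F/4)A(F)\longrightarrow0 .
\end{align*}

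One technicality remains: the Proposition's threshold $\underline F$ depends on the function $A$. This is harmless here, since $A$ is fixed once and for all in terms of $\eta$ before the Proposition is applied.
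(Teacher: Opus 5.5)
Your proposal is correct and follows the paper's argument. You apply the low-curvature-point proposition with a scale $A$ chosen so that $A\to\infty$ and $A\,\eta\to0$, then integrate $|\nabla R|$ along a minimizing geodesic, using the $\tfrac12$-Lipschitz property of $\sqrt f$ to keep $f$ comparable to $F$ there. The only difference is the choice of $A$: the paper takes $A(F)=\min\{F^{1/4},\eta(F/2)^{-1/2}\}$, so that $d(x,y)\to0$ and $F/2\le f\le 2F$ on the geodesic. Your final choice $\min\{F^{1/2},\eta(F/4)^{-1/2}\}$ gives $d(x,y)\le C$ and $F/4\le f\le 4F$, which works equally well.
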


\begin{proof}
	Let $A(F):=\min\bigl\{F^{1/4},\,\eta(F/2)^{-1/2}\bigr\}$
	with the convention \(\eta(F/2)^{-1/2}=+\infty\) if \(\eta(F/2)=0\), where $\eta$ is defined by (\ref{eq:II-eta}). Then \(A(F)\to\infty\), \(A(F)\le F^{1/4}\), and
	\begin{align}
		A(F)\eta(F/2)\longrightarrow0.\label{eq:II-Aeta}
	\end{align}
	As before, set \(t=t_F:=A(F)/F\) and \(y:=y_{x,t_F}\).
	Let \(\gamma\) be a minimizing geodesic from \(x\) to \(y\), Then (\ref{9.9*}) implies $f\left(\gamma(s)\right)/F\to1$ uniformly, i.e. for large \(F\),
	\begin{align}
		\frac F2\le f(\gamma(s))\le 2F.\label{eq:II-fbound}
	\end{align}
	Definition of \(\eta\) and (\ref{eq:II-fbound}) gives $\left|\nabla R\right|(\gamma(s))\le 2^{3/2}\eta(F/2)F^{3/2}$.
	Integrating along \(\gamma\) and by (\ref{9.9*}), then for $r:=d(x,y_{x,t_F})$, we have
	\begin{align*}
		\left|R(x)-R(y)\right|\leq r\cdot 2^{3/2}\eta(F/2)F^{3/2}\leq 2^{3/2}CA(F)\eta(F/2)F.
	\end{align*}
	Therefore by (\ref{eq:II-Aeta}),
	\begin{align}
		\frac{\left|R(x)-R(y)\right|}{F}\leq 2^{3/2}CA(F)\eta(F/2)\longrightarrow0.\label{eq:II-diffR}
	\end{align}
	Combining (\ref{9.9**}) and (\ref{eq:II-diffR}),
	\begin{align*}
		\frac{R(x)}F\leq \frac{2(n-\boldsymbol{\mu})}{A(F)} + 2^{3/2}CA(F)\eta(F/2)\longrightarrow0.
	\end{align*}
\end{proof}	

Besides applications based on point-wise estimates of heat kernel, integral representation via heat kernel is another important direction. Here we only give a remark for this point of view.
\begin{remark}[A dimension-- and entropy--free gradient estimate for the scalar curvature]
	Suppose 
	\begin{align*}
		K:=\sup_{M^n}\left|Rc\right|<\infty.
	\end{align*}
	Let $dv_f:=e^{-f}\,dv$ and denote by \(P_t^f=e^{t\Delta_f}\) the weighted heat semigroup.
	Since
	\begin{align*}
		\Delta_fR=R-2\left|Rc\right|^2,
	\end{align*}
	the standard cutoff argument on Ricci shrinkers and the
	Duhamel formula give, for every \(T>0\),
	\begin{align*}
		R(x)=e^{-T}P_T^fR(x)+2\int_0^T e^{-t}P_t^f\left|Rc\right|^2(x)\,dt.
	\end{align*}
	Since \(0\leq R\leq\sqrt{n}K\) and \(P_T^f1=1\), we have
	\begin{align*}
		0\leq e^{-T}P_T^fR(x)\leq\sqrt{n}K e^{-T}.
	\end{align*}
	Setting \(T\to\infty\), we obtain
	\begin{align*}
		R(x)=2\int_0^\infty e^{-t}P_t^f\left|Rc\right|^2(x)\,dt.
	\end{align*}
	By extensions of Li--Wang\cite[Theorems 3.15 and 3.16]{LW2} based on Bamler's sharp gradient estimates \cite[Theorem 4.1, Proposition 4.2]{Ba}, one may repeat the directional \(p=1\) argument of \cite[Corollary~5.3]{Koi}. After pulling the resulting estimate back through Proposition~\ref{p6.1}, we have
	\begin{align*}
		\sqrt{e^t-1}\int_{M^n}\left|\nabla_v^xH_f(x,y,t)\right|\,dv_f(y)
		\leq\frac{\Gamma(1)}{\sqrt{\pi}}=\frac1{\sqrt{\pi}}
	\end{align*}
	for every unit vector $v\in T_xM^n$.
	Since \(0\leq \left|Rc\right|^2\leq K^2\), it follows that
	\begin{align*}
		\left|\nabla^x_vP_t^f\left|Rc\right|^2\right|(x)\leq K^2\int_{M^n}\left|\nabla_v^xH_f(x,y,t)\right|\,dv_f(y)
		\leq\frac{K^2}{\sqrt{\pi(e^t-1)}}.
	\end{align*}
	Since
	\begin{align*}
		\int_0^\infty\frac{e^{-t}}{\sqrt{e^t-1}}\,dt=\frac{\pi}{2},
	\end{align*}
then
	\begin{align*}
		\left|\nabla^x_vR\right|(x)\leq2\int_0^\infty e^{-t}\left|\nabla^x_vP_t^f\left|Rc\right|^2(x)\right|\,dt
		\leq\frac{2K^2}{\sqrt{\pi}}\int_0^\infty\frac{e^{-t}}{\sqrt{e^t-1}}\,dt=\sqrt{\pi}K^2.
	\end{align*}
	Therefore
	\begin{align*}
			\sup_{M^n}|\nabla R|\leq\sqrt{\pi}K^2.
	\end{align*}
\end{remark}

\noindent {\it\textbf{Acknowledgement}}: The authors are supported by YSBR--001, NSFC--12431003 and a research fund from USTC. The results and proofs presented in this paper were developed by the authors over the past two years, independently of AI tools. AI was used solely to assist in checking the proofs and editing the English. The authors take full responsibility for the mathematical content of the paper.

	\vspace{0.5in}
	
     Bing  Wang,  Institute of Geometry and Physics, and School of Mathematical Sciences, University of Science and Technology of China, Hefei 230026, China; Hefei National Laboratory, Hefei 230088, China.

Email: topspin@ustc.edu.cn
	
	Jie  Wang,  Institute of Geometry and Physics, University of Science and Technology of China,  Hefei 230026, China.
	
	Email: wangjie9math@163.com
	
\end{document}